\documentclass[onefignum,onetabnum]{siamart171218}

\usepackage{braket,amsfonts}
\usepackage{mathtools}

\definecolor{shadethmcolor}{rgb}{1,0.871,0.890}
\definecolor{shaderulecolor}{rgb}{0.651,0.074,0.090}
\usepackage{array}
\usepackage{dsfont}
\usepackage{hhline}
\usepackage[mathscr]{euscript}
\usepackage{textcomp}
\usepackage{amssymb} 
\usepackage[toc,page]{appendix} 
\usepackage[a4paper]{geometry}
\geometry{hmargin=2.7cm,vmargin=2.7cm,}



\usepackage[caption=false]{subfig}
\captionsetup[subtable]{position=bottom}   
\captionsetup[table]{position=bottom}

\usepackage{pgfplots}

\usepackage{tikz}
\usepackage{pgfplots}
\usepackage{subfig}
\usetikzlibrary{matrix,external}
\usepackage{mwe}

\newsiamthm{claim}{Claim}
\newsiamthm{remark}{Remark}
\newsiamthm{prop}{Proposition}
\newsiamthm{assumption}{Assumption}
\newsiamremark{hypothesis}{Hypothesis}
\crefname{hypothesis}{Hypothesis}{Hypotheses}

\usepackage{algorithmic}

\usepackage{graphicx,epstopdf}

\Crefname{ALC@unique}{Line}{Lines}

\usepackage[sort,nocompress]{cite}


\usepackage{amsopn}

\newcommand\B{\mathcal{B}}

\newcommand\C{\mathcal{C}}

\newcommand\Y{\mathcal Y}

\newcommand\dsp{\displaystyle}

\newcommand\oo{\mathcal O}

\newcommand{\vphi}{\varphi}

\hypersetup{
	colorlinks,
	citecolor=red,
	filecolor=black,
	linkcolor=blue, 
	urlcolor=green!50!black,
}

\def\dx{\,\textnormal{d}x}
\def\dt{\textnormal{d}t}

\title{Insensitizing control problems for the stabilized  Kuramoto-Sivashinsky  system}

\author{Kuntal Bhandari\thanks{Institute of Mathematics of the Czech Academy of Sciences, \v{Z}itn\'a 25, 11567 Praha 1, Czech Republic (email: bhandari@math.cas.cz).}
	\and V\'ictor Hern\'andez-Santamar\'ia\thanks{Instituto de Matem\'aticas, Universidad Nacional Aut\'onoma de M\'exico, Circuito Exterior C.U., C.P.04510 CDMX, M\'exico (email: victor.santamaria@im.unam.mx).}}

\headers{Insensitizing control problems for the stabilized  KS  system}{K. Bhandari,   V. Hern\'andez-Santamar\'ia}

\begin{document}

	\maketitle

	\begin{abstract}
In this work, we address the existence of insensitizing controls for a nonlinear coupled system of fourth- and second-order parabolic equations known as the stabilized Kuramoto-Sivashinsky model. The main idea is to look for controls such that some functional of the state (the so-called sentinel) is locally insensitive to the perturbations of the initial data. Since the underlying model is coupled, we shall consider a sentinel in which we may observe one or two components of the system in a localized observation set. By some classical arguments, the insensitizing problem can be reduced to a null-controllability one for a cascade system where the number of equations is doubled. Upon linearization, the null-controllability for this new system is studied by means of Carleman estimates but unlike other insensitizing problems for scalar models, the election of the Carleman tools and the overall control strategy depends on the initial choice of the sentinel due to the (lack of) couplings arising in the extended system. Finally, the local null-controllability of the extended (nonlinear) system (and thus the insensitizing property) is obtained by applying a local inversion theorem. 
	\end{abstract}

	\begin{keywords}
		Parabolic system, insensitizing controls, Carleman inequalities, observability, inverse mapping theorem. 
		\end{keywords}

	\begin{AMS} 
		35K52 - 93B05 - 93B07 - 93B35.
	\end{AMS} 
	
	\section{Introduction and general setting}\label{section-introduction}
	
	\subsection{Problem statement and bibliographic comments}


Let $T>0$ and $\omega \subset (0,1)$ be any nonempty open set. 
	We   introduce the space $Q_T: = (0,T) \times (0,1)$ and
	 its boundary $\Sigma_T:=(0,T) \times  \{ 0,1\}$. We also  take  another nonempty open set  $\oo \subset (0,1)$  hereinafter referred as the observation set.

Let us consider the following control system with incomplete data
\begin{align}\label{system-main}
	\begin{dcases}
		y_t + y_{xxxx} +\gamma y_{xx} + yy_x = z_x +  h_1\mathds{1}_{\omega}+\xi_1  & \text{in } Q_T, \\
		z_t - z_{xx} + \beta z_x =  y_x + h_2 \mathds{1}_{\omega}   + \xi_2 & \text{in } Q_T,\\
		y=y_x=z=0        &\text{in } \Sigma_T, \\
		y(0)=y_0+\tau \bar y_0, \ \ z(0) =z_0+\tau \bar z_0  &\text{in } (0,1),
 	\end{dcases}
	\end{align}
where $\gamma>0$ and $\beta$ is any real number. 

In \eqref{system-main}, $y=y(t,x)$ and $z=z(t,x)$ are the state variables, $h_i=h_i(t,x)$, $i=1,2$ are control functions acting on the control set $\omega$, $\xi_i=\xi_i(t,x)$, $i=1,2$, are given external source terms and the initial state $(y(0),z(0))$ is partially unknown in the following sense:

\smallskip 

\begin{itemize}
\item $(y_0,z_0)\in [L^{2}(0,1)]^2$ are given,
\item $(\bar y_0,\bar z_0)\in [L^{2}(0,1)]^2$ are unknown and satisfy $\|\bar y_0\|_{L^2(0,1)}=\|\bar z_0\|_{L^2(0,1)}=1$. They represent some {\em uncertainty} on the initial data. 

\item $\tau\in\mathbb R$ is unknown and small enough. 
\end{itemize}

From the modeling point of view, when $h_i\equiv\xi_i\equiv 0$, $i=1,2$, and $\tau=0$, system \eqref{system-main} is the so-called stabilized Kuramoto-Sivashinsky system, which was proposed in \cite{FMK03} as a model of front propagation in reaction-diffusion phenomena and combines dissipative features with dispersive ones.

\smallskip 

In this work, our main goal is to study an insensitizing control problem for \eqref{system-main}. This problem, originally introduced by J.-L. Lions in \cite{Lio90}, can be stated as follows: we observe the solution of system \eqref{system-main} through a functional $J_\tau$ (the so-called \textit{sentinel}) defined on the set of solutions to \eqref{system-main}, which is in this case given by
\begin{equation}\label{eq:sentinel}
J_\tau(y,z)=\frac{\alpha}{2}\iint_{\mathcal O\times(0,T)} |y|^2\dx\dt+\frac{1-\alpha}{2}\iint_{\mathcal O\times(0,T)}|z|^2\dx\dt, \quad \alpha\in[0,1].
\end{equation}
Then, the insensitizing control problem is to find controls $(h_1,h_2)$ such that the uncertainty in the initial data does not affect the measurement $J_\tau$, that is
\begin{equation}\label{eq:ins_satisfy}
\left.\frac{\partial J_\tau(y,z)}{\partial \tau}\right|_{\tau=0}=0 \quad \forall (\bar y_0,\bar z_0)\in [L^{2}(0,1)]^2 \  \textnormal{with } \|\bar y_0\|_{L^{2}(0,1)}=\|\bar z_0\|_{L^{2}(0,1)} = 1.
\end{equation}
When \eqref{eq:ins_satisfy} holds, the sentinel $J_\tau$ is said to be {\em locally insensitive} to the perturbations of the initial data. In other words, \eqref{eq:ins_satisfy} indicates that
the sentinel does not detect the variations
of the initial data $(y_0,z_0)$  by the  unknown small perturbation $\tau (\overline y_0, \overline z_0)$ in the observation domain $\mathcal O$.

\smallskip

The parameter $\alpha$ has been introduced  in \eqref{eq:sentinel} to take into account the contribution of each state variable in the sentinel. Note that the insensitivity condition \eqref{eq:ins_satisfy} should be satisfied for any perturbation of the initial datum of both components, hence removing one observation in \eqref{eq:sentinel} (i.e. taking $\alpha=0$ or $\alpha=1$) reduces the information available and the problem becomes more interesting. Indeed, we will see later that additional difficulties arise in either of those cases and for that reason we will mainly focus on them. 

\smallskip 

The first results concerning the existence of insensitizing controls were obtained for linear and semilinear heat equations in \cite{BF95,deT00}. After that, many works have been devoted to study the insensitizing problem from different perspectives: in \cite{BGBP04,BGP04b,BGP04c}, the authors study such problem for linear and semilinear heat equations with different types of nonlinearities and/or boundary conditions, while in \cite{Sergio} the problem of insensitizing a sentinel depending on the gradient of the solution of a linear parabolic equation is addressed. For insensitizing problems of equations in fluid mechanics we refer to the works \cite{Gue13,CG14,CGG15,Cn17} and for a phase field system to \cite{CCC16}. Most recently, the insensitizing control problem has been addressed from a numerical point of view in \cite{BHSdeT19}, for fourth-order parabolic equations in \cite{Kas20} and with respect to shape variations in \cite{LPS19,ELP20}. We also mention a recent work \cite{T20} where the authors studied the  insensitizing  controls for
the fourth-order dispersive nonlinear Schr\"odinger equation with cubic nonlinearity. 

\subsection{Main results}
Our aim is  to prove the existence of control functions $(h_1, h_2)$ which insensitizes the functional $J_\tau$  given by \eqref{eq:sentinel}.
In this spirit, our  control result is the following.
\begin{theorem}\label{thm:main}
	Assume that $\oo \cap \omega \neq \emptyset$ and $y_0\equiv z_0\equiv 0$.  Then for any $\alpha \in [0,1]$, there exist constants $C>0$ and $\delta>0$  
	such that for any $(\xi_1,\xi_2)\in [L^2(Q_T)]^2$ verifying 
	\begin{equation}\label{eq:cond_rho-2}
		\|e^{C/t}(\xi_1, \xi_2)\|_{[L^2(Q_T)]^2} \leq \delta 
	\end{equation}
	one can prove the existence of  some controls $(h_1,h_2)\in [L^2((0,T)\times \omega)]^2$ which insensitize the functional $J_\tau$ in the sense of \eqref{eq:ins_satisfy}. 
\end{theorem}

To prove the above theorem, we shall equivalently prove another result given by \Cref{thm:main_extended} below.  In fact, following well-known arguments (see e.g. \cite[Proposition 1]{BF95} or \cite[Appendix]{deTZ09}), it can be proved that the insensitivity condition \eqref{eq:ins_satisfy} is equivalent to a null-control problem for an extended system. More precisely, we have the following.         
\begin{prop}\label{Prop-primal}
Consider the  extended system 
\begin{align}\label{sys_equiv_1}
	&\begin{dcases}
			 y_t + 	 y_{xxxx} +\gamma 	 y_{xx} + yy_x  = 	 z_x +  h_1\mathds{1}_{\omega}+\xi_1  & \textnormal{in } Q_T, \\
			 z_t - 	 z_{xx} + \beta 	 z_x = 	 y_x + h_2 \mathds{1}_{\omega}   + \xi_2 & \textnormal{in } Q_T,\\
			y= 	 y_x= 	 z=0        &\textnormal{in } \Sigma_T, \\
		 y(0)= y_0, \ \ 	 z(0) = z_0  &\textnormal{in } (0,1),
 	\end{dcases} \\ \label{sys_equiv_2}
	&\begin{dcases}
		-p_t + p_{xxxx} +\gamma p_{xx} -yp_x = -q_x + \alpha y\mathds{1}_{\mathcal O}  & \textnormal{in } Q_T, \\
		-q_t - q_{xx} - \beta q_x =  -p_x + (1-\alpha) 	 z \mathds{1}_{\mathcal O} & \textnormal{in } Q_T,\\
		p=p_x=q=0        &\textnormal{in } \Sigma_T, \\
		p(T)=0, \ \ q(T) =0  &\textnormal{in } (0,1) .
 	\end{dcases}
\end{align}
Then, the controls $(h_1,h_2)$ verify the insensitivity condition \eqref{eq:ins_satisfy} for the sentinel \eqref{eq:sentinel} if and only if the associated solution to \eqref{sys_equiv_1}--\eqref{sys_equiv_2} satisfies
\begin{equation}\label{eq:null_cascade}
(p(0),q(0))=(0,0) \quad \textnormal{in } (0,1). 
\end{equation}
\end{prop}

In view of this result, in what follows we only focus on studying controllability properties for the extended system \eqref{sys_equiv_1}--\eqref{sys_equiv_2}. Indeed, we prove the following theorem which is the main result of our paper. 
\begin{theorem}[Local null-controllability of the extended system]\label{thm:main_extended}
	Assume that $\oo \cap \omega \neq \emptyset$ and $y_0\equiv z_0\equiv 0$.  Then for any $\alpha \in [0,1]$, there exist constants $C>0$ and $\delta>0$  
	such that for any $(\xi_1,\xi_2)\in [L^2(Q_T)]^2$ verifying 
	\begin{equation}\label{eq:cond_rho}
		\|e^{C/t}(\xi_1, \xi_2)\|_{[L^2(Q_T)]^2} \leq \delta ,
	\end{equation}
there exist controls $(h_1,h_2)\in [L^2((0,T)\times \omega)]^2$ such that the solution $(y,z,p,q)$ to \eqref{sys_equiv_1}--\eqref{sys_equiv_2} satisfies $p(0)=q(0)=0$ in $(0,1)$.   
\end{theorem}

We note that the controls $(h_1, h_2)$ act indirectly on the state $(p,q)$ by means of the couplings terms exerted on the observation set $\mathcal O$, that is, we have more equations than controls. As it has been pointed out in \cite{AKBGBdeT11}, this situation is more complicated than controlling scalar systems and, as we have said before, the introduction of the parameter $\alpha$ introduces an additional difficulty. Note that when $\alpha=0$ or $\alpha=1$ one of the couplings in system \eqref{sys_equiv_2} is removed and the action of the controls $(h_1,h_2)$ enters indirectly on the backward system only through one coupling term. As we will see later, this translates into using different Carleman tools for studying the observability of the corresponding adjoint system and establishing the controllability of \eqref{sys_equiv_1}--\eqref{sys_equiv_2}.

\begin{remark} Some remarks are in order. 
\begin{itemize} 
\item  As in other insensitizing problems, the assumption on the zero initial condition is roughly related to the fact that system \eqref{sys_equiv_1}--\eqref{sys_equiv_2} is composed by forward and backward equations. As noticed in the work \cite{deTZ09}, even for the simple heat equation is not an easy task to characterize the space of initial datums that can be insensitized; see also 
\cite{deT00}. 

\smallskip 

\item  The assumption $\mathcal O\cap \omega\neq \emptyset$ is essential to prove an observability inequality (see \Cref{Obser-ineq} for instance), which is the main ingredient in the proof of \Cref{thm:main_extended}. Notwithstanding, in \cite{KdeT10}, the authors have proved that in the simpler case of the heat equation this condition is not necessary if one considers an $\epsilon$-insensitizing problem (i.e., $\left|\frac{\partial J_\tau(y,z)}{\partial \tau}|_{\tau=0}\right|\leq \epsilon$). In our case, this remains as an open question. 
\end{itemize}
\end{remark}

  To prove Theorem \ref{thm:main_extended}, we shall first prove a null-controllability result for the following linearized (around zero) model associated to \eqref{sys_equiv_1}--\eqref{sys_equiv_2},
	\begin{align}\label{sys_linear-1}
		&\begin{dcases}
			y_t + 	 y_{xxxx} +\gamma 	 y_{xx}  = 	 z_x +  h_1\mathds{1}_{\omega} + f_1  & \textnormal{in } Q_T, \\
			z_t - 	 z_{xx} + \beta 	 z_x = 	 y_x + h_2 \mathds{1}_{\omega} + f_2   & \textnormal{in } Q_T,\\
			y= 	 y_x= 	 z=0        &\textnormal{in } \Sigma_T, \\
			y(0)= y_0, \ \ 	 z(0) = z_0  &\textnormal{in } (0,1),
		\end{dcases} \\ \label{sys_linear-2}
		&\begin{dcases}
			-p_t + p_{xxxx} +\gamma p_{xx} = -q_x + \alpha 	 y\mathds{1}_{\mathcal O} + f_3  & \textnormal{in } Q_T, \\
			-q_t - q_{xx} - \beta q_x =  -p_x + (1-\alpha) 	 z \mathds{1}_{\mathcal O} + f_4 & \textnormal{in } Q_T,\\
			p=p_x=q=0        &\textnormal{in } \Sigma_T, \\
			p(T)=0, \ \ q(T) =0  &\textnormal{in } (0,1) .
		\end{dcases}
	\end{align}
with  source terms $f_1, f_2, f_3 , f_4$   from  the space $L^2(Q_T)$.

As usual, the controllability problem boils down to study 
 the observability properties of the adjoint system associated to \eqref{sys_linear-1}--\eqref{sys_linear-2}.  Written in a more compact notation, the adjoint system reads as
\begin{align}\label{adj_sys}
	&\begin{dcases}
		-u_t + u_{xxxx} +\gamma u_{xx} = F_1 -w_x +  \alpha\, \zeta \mathds{1}_{\mathcal O}  & \textnormal{in } Q_T, \\
		-w_t - w_{xx} - \beta w_x =  F_2 -u_x + (1-\alpha)\theta \mathds{1}_{\mathcal O} & \textnormal{in } Q_T,\\
		\zeta_t + \zeta_{xxxx} +\gamma \zeta_{xx} = F_3 + \theta_x  & \textnormal{in } Q_T, \\
		\theta_t - \theta_{xx} + \beta \theta_x = F_4+  \zeta_x & \textnormal{in } Q_T,\\
		u=u_x=w=\zeta=\zeta_x=\theta=0        &\textnormal{in } \Sigma_T, \\
		u(T)=0, \ \ w(T) =0, &\textnormal{in } (0,1),
		\\ \zeta(0)=\zeta_0, \ \ \theta(0) =\theta_0  &\textnormal{in } (0,1).
 	\end{dcases}
\end{align}
with given  data $(\zeta_0, \theta_0)$ and source terms $F_j$ for $j=1,2,3,4$ from some suitable Hilbert spaces. 

The strategy amounts to apply  Carleman estimates for each equation of system \eqref{adj_sys} and then use the first and second equations to estimate locally the terms related to $\zeta$ and $\theta$. Note that for $\alpha\in (0,1)$, we have a natural way to estimate such terms thanks to the hypothesis $\mathcal O\cap\omega\neq \emptyset$, but as soon as $\alpha=0$ or $\alpha=1$, we lose information on either $\zeta$ or $\theta$ and we have to use the first-order couplings from the third and fourth equation of \eqref{adj_sys} to do local energy estimates. To circumvent this, we shall use some Carleman tools from the works \cite{Cerpa-Mercado-Pazoto} and \cite{Cerpa_Careno} allowing us to derivative with respect to $x$ the equations verified by $\zeta$ or $\theta$ and then estimate locally the first-order derivative of these variables.

\medskip 

\paragraph{\bf Paper organization} The rest of the paper is organized as follows. 
\begin{itemize} 
\item[--] In Section \ref{sec-well-posedness} we shortly discuss the well-posedness of the extended system \eqref{sys_equiv_1}--\eqref{sys_equiv_2}.

\item[--] In Section \ref{sec:carleman_estimates} we address several Carleman estimates for different $\alpha$.  More precisely, the Subsections \ref{carleman-alpha=0}, \ref{carleman-alpha=1} and \ref{carleman-alpha=0,1}  contain the Carleman inequalities associated to  the  adjoint system \eqref{adj_sys} for  $\alpha=0$, $\alpha=1$ and $\alpha\in (0,1)$ respectively.

\item[--]    Thereafter, in Section \ref{Section-local-null}, we discuss the main controllability results  for diffrent $\alpha\in [0,1]$.  We mainly focus on the two significant cases: $\alpha=0$ and $\alpha=1$. 

\smallskip   

To be more precise, the Subsections \ref{sec-obser-0}--\ref{sec-null-0} contains the observability inequality and null-controllability for the linearized model \eqref{sys_linear-1}--\eqref{sys_linear-2} when $\alpha=0$. Then, in Subsection \ref{section-locall-null-alpha=0} we prove the local null-controllability of the extended system \eqref{sys_equiv_1}--\eqref{sys_equiv_2} for $\alpha=0$.

\smallskip 

In the Subsections \ref{sec-obs-1}--\ref{sec-null-control-1} we discuss the null-controllability of the linearized model when $\alpha=1$ and the proof of the local null-controllability for this case will be as similar as the case $\alpha=0$ and thus we left this to the reader.

\item[--]  Finally, Section \ref{sec:final} is devoted to present some concluding remarks. 
\end{itemize}

\medskip 
\paragraph{\bf Notations}
Throughout the paper, $C>0$ denotes a generic constant that may vary line to line but is independent in the Carleman parameters $s$ or $\lambda$.

By $\dsp \iint$, we denote the integral on $Q_T$ and by $\dsp \iint_O$, we denote the integral on $(0,T)\times O$ for any non-empty open set $O \subset (0,1)$.

\medskip 
\section{Well-posedness}\label{sec-well-posedness}

 In this section, we shortly discuss the well-posedness of the $4\times 4$ control system \eqref{sys_equiv_1}--\eqref{sys_equiv_2}.


\subsection{The linear adjoint system}
Following the results in \Cref{Prop-appendix-well-posed} and  \Cref{Prop-appendix-regularity}, one can write the proposition below. 
\begin{prop}\label{prop-adjoint} 
\begin{enumerate}
  \item    For given data $(\zeta_0,\theta_0) \in [L^{2}(0,1)]^2$ and source terms $F_j \in L^2(Q_T)$ for $j=1,2,3,4$,  
  there exists unique $(u,w, \zeta, \theta)$ solution to \eqref{adj_sys} such that
\begin{align}  
	\label{well-posed-1}	u, \zeta \in  \C^0([0,T]; L^2(0,1)) \cap L^2(0,T; H^2_0(0,1)) \cap H^1(0,T; H^{-2}(0,1)), \\
	\label{well-posed-2}	w, \theta \in \C^0([0,T]; L^2(0,1)) \cap L^2(0,T; H^1_0(0,1)) \cap H^1(0,T; H^{-1}(0,1)) , 
\end{align} 
satisfying 
\begin{multline*}
	\|u\|_{\C^0(L^2)\cap L^2(H^2_0) \cap H^1(H^{-2})} + 	\|w\|_{\C^0(L^2)\cap L^2(H^1_0) \cap H^1(H^{-1})}   
	+ 	\|\zeta\|_{\C^0(L^2)\cap L^2(H^2_0) \cap H^1(H^{-2})}\\ + 	\|\theta\|_{\C^0(L^2)\cap L^2(H^1_0) \cap H^1(H^{-1})}  \leq C \Big(\|(\zeta_0, \theta_0)\|_{[L^2(0,1)]^2} + \sum_{j=1}^4 \|F_j\|_{L^2(Q_T)}\Big)  .
\end{multline*}

\item 
  If we choose the data $(\zeta_0, \theta_0) \in H^2_0(0,1)\times H^1_0(0,1)$, then
the solution to \eqref{adj_sys} satisfies the  following regularity results:
\begin{align} 
	\label{regu-1}	u, \zeta \in  \C^0([0,T]; H^2_0(0,1)) \cap L^2(0,T; H^4(0,1) \cap H^2_0(0,1)) \cap H^1(0,T; L^2(0,1)), \\
	\label{regu-2}	w, \theta \in \C^0([0,T]; H^1_0(0,1)) \cap L^2(0,T; H^2(0,1) \cap H^1_0(0,1)) \cap H^1(0,T; L^2(0,1)),
\end{align} 
with in addition, 
\begin{multline*}
	\|u\|_{\C^0(H^2_0)\cap L^2(H^4\cap H^2_0) \cap H^1(L^{2})} + 	\|w\|_{\C^0(H^1_0)\cap L^2(H^2) \cap H^1(L^2)}  
	+ 	\|\zeta\|_{\C^0(H^2_0)\cap L^2(H^4\cap H^2_0) \cap H^1(L^{2})} \\ + 
	\|\theta\|_{\C^0(H^1_0)\cap L^2(H^2) \cap H^1(L^2)} 
	 \leq  C \Big(\|(\zeta_0, \theta_0)\|_{H^2_0(0,1)\times H^1_0(0,1)} + \sum_{j=1}^4 \|F_j\|_{L^2(Q_T)}\Big).
\end{multline*}
\end{enumerate}
\end{prop}

	\subsection{The main linear system}
	
In a same spirit, we have the following result for the direct linear system \eqref{sys_linear-1}--\eqref{sys_linear-2}. 

	\begin{prop}\label{prop-linear}
		For given $(y_0, z_0)\in [L^2(0,1)]^2$ and $f_j\in L^2(Q_T)$ for $j=1,2,3,4$ 
		 there exists unique $(y,z,p,q)$ solution to  \eqref{sys_linear-1}--\eqref{sys_linear-2} such that 
	\begin{align}  
\label{sol-lin-1}		y, p \in  \C^0([0,T]; L^2(0,1)) \cap L^2(0,T; H^2_0(0,1)) \cap H^1(0,T; H^{-2}(0,1)), \\
\label{sol-lin-2}	z, q \in \C^0([0,T]; L^2(0,1)) \cap L^2(0,T; H^1_0(0,1)) \cap H^1(0,T; H^{-1}(0,1)) , 
	\end{align} 
	satisfying 
	\begin{multline}\label{bound-sol-linear}
		\|y\|_{\C^0(L^2)\cap L^2(H^2_0) \cap H^1(H^{-2})} + 	\|z\|_{\C^0(L^2)\cap L^2(H^1_0) \cap H^1(H^{-1})}   
		+ 	\|p\|_{\C^0(L^2)\cap L^2(H^2_0) \cap H^1(H^{-2})}\\ + 	\|q\|_{\C^0(L^2)\cap L^2(H^1_0) \cap H^1(H^{-1})}  \leq C \Big(\|(y_0, z_0)\|_{[L^2(0,1)]^2} + \sum_{j=1}^4 \|f_j\|_{L^2(Q_T)} + \|(h_1,h_2)\|_{[L^2((0,T)\times \omega)]^2}\Big)  .
	\end{multline}
	
	\end{prop}

	


\subsection{The nonlinear system}

\begin{theorem}
	There exists positive real number $\delta_0$ such that for any $(y_0,z_0)\in [L^2(0,1)]^2$, \\ $(h_1,h_2)\in [L^2((0,T)\times \omega)]^2$   and $(\xi_1, \xi_2)\in [L^2(Q_T)]^2$ satisfying 
	\begin{align}\label{cond-initial}
		\|(y_0, z_0)\|_{[L^2(0,1)]^2} + \|(h_1, h_2)\|_{[L^2((0,T)\times \omega)]} + \|(\xi_1, \xi_2)\|_{[L^2(Q_T)]^2} \leq \delta_0 ,
	\end{align}
the nonlinear problem \eqref{sys_equiv_1}--\eqref{sys_equiv_2} has unique solution $(y,z,p,q)$ with 
\begin{align*}
	(y,z, p, q)\in \C^0([0,T]; [L^2(0,1)]^4) \cap L^2(0,T; H^2_0(0,1) \times H^1_0(0,1) \times H^2_0(0,1) \times H^1_0(0,1)  )   .
\end{align*}
\end{theorem}

\begin{proof}

	Let us define the map 
	\begin{align*}
		&\Lambda: [L^\infty(0,T; L^2(0,1)) \cap L^2(0,T; H^2_0(0,1))]^2    \to [L^\infty(0,T; L^2(0,1)) \cap L^2(0,T; H^2_0(0,1))]^2  , \\
		&\Lambda(\widetilde y, \widetilde p) = (y,p),
	\end{align*}
	where $(y,z,p,q)$ 
	 is the unique solution to \eqref{sys_equiv_1}--\eqref{sys_equiv_2} satisfying \eqref{sol-lin-1}--\eqref{sol-lin-2}  with $f_1 = \xi_1 - \widetilde y \, \widetilde y_x$, $f_2=\xi_2$, $f_3=\widetilde y \, \widetilde p_x$ and $f_4=0$.

 We compute that 
	\begin{align*}
	\|\widetilde y \, \widetilde y_x\|_{L^2(Q_T)} &= \bigg(	\int_0^T \int_0^1 |\widetilde y \, \widetilde y_x|^2\bigg)^{\frac{1}{2}}  \leq \bigg(\int_0^T \|\widetilde y(t)\|^2_{L^{2}(0,1)} \|\widetilde y_x(t)\|^2_{L^{\infty}(0,1)}\bigg)^{\frac{1}{2}} \\
	& \leq \|\widetilde y\|_{L^\infty(0,T; L^{2}(0,1))} \|\widetilde y_x \|_{L^2(0,T; H^1_0(0,1))} \\
	 & \leq \|\widetilde y\|_{L^\infty(0,T; L^{2}(0,1))} \|\widetilde y \|_{L^2(0,T; H^2_0(0,1))} ,
	\end{align*}
and similarly, 
\begin{align*}
		\|\widetilde y \, \widetilde p_x\|_{L^2(Q_T)} \leq \|\widetilde y\|_{L^\infty(0,T; L^{2}(0,1))} \|\widetilde p \|_{L^2(0,T; H^2_0(0,1))}.
\end{align*}
Therefore, by means of \Cref{prop-linear}, we have 
	\begin{multline}\label{bound-linear}
	\|y\|_{\C^0(L^2) \cap L^2(H^2_0) \cap H^1(H^{-2})}
		+\|z\|_{\C^0(L^2) \cap L^2(H^1_0) \cap H^1(H^{-1}) }
		+\|p\|_{\C^0(L^2) \cap L^2(H^2_0) \cap H^1(H^{-2})} \\
		+ \|q\|_{\C^0(L^2) \cap L^2(H^1_0) \cap H^1(H^{-1})}
	 \leq C_0 \Big(\|(y_0, z_0)\|_{[L^2(0,1)]^2} + \|(h_1,h_2)\|_{[L^2((0,T)\times \omega)]^2}  \\  
	 +\|(\xi_1, \xi_2)\|_{[L^2(Q_T)]^2} + \|\widetilde y\|_{L^\infty(L^{2})} \|\widetilde y \|_{L^2(H^2_0)} + \|\widetilde y\|_{L^\infty(L^{2})} \|\widetilde p \|_{L^2(H^2_0)} \Big) .
	\end{multline}
for some  constant $C_0>0$.


	
Now, we	denote the set 
	\begin{align*}
	\B_R = \Big\{ (y,p) \in & [L^\infty(0,T;L^{2}(0,1)) \cap L^2(0,T;H^{2}_0(0,1))]^2 \, \mid \, \\
& \qquad \qquad 	\big(\| y\|_{L^\infty(L^{2}) \cap L^2(H^2_0)} + \|p\|_{L^\infty(L^{2}) \cap L^2(H^2_0)}\big) \leq R  \Big\}. 
	\end{align*} 

--	Then starting with $(\widetilde y, \widetilde p)\in \B_R$, we have from \eqref{bound-linear} 
	\begin{equation}\label{bound-linear-2}
	\begin{aligned}
		\|y\|_{L^\infty(L^2) \cap L^2(H^2_0)}
		+\|z\|_{L^\infty(L^2) \cap L^2(H^1_0)}
		+\|p\|_{L^\infty(L^2) \cap L^2(H^2_0)} 
		+ \|q\|_{L^\infty(L^2) \cap L^2(H^1_0)} \\
		\leq C_0 \Big(\|(y_0, z_0)\|_{[L^2(0,1)]^2} + \|(h_1,h_2)\|_{[L^2((0,T)\times \omega)]^2}    
		+\|(\xi_1, \xi_2)\|_{[L^2(Q_T)]^2} + R^2 \Big) .
	\end{aligned}
\end{equation}
It follows that if $R< \frac{1}{C_0}$ and 
\begin{align*}
\|(y_0, z_0)\|_{[L^2(0,1)]^2} + \|(h_1,h_2)\|_{[L^2((0,T)\times \omega)]^2}  
+\|(\xi_1, \xi_2)\|_{[L^2(Q_T)]^2} \leq \frac{R-C_0 R^2}{C_0},
\end{align*}
	one has $\Lambda(\B_R) \subset \B_R$ which implies  $\B_R$  is  stable under  the map $\Lambda$.  We take $\delta_0=\frac{R-C_0 R^2}{C_0}$ in \eqref{cond-initial}.

	\medskip 
	
-- Let us now prove that $\Lambda$ is a contraction map. Choose two elements $(\widetilde y, \widetilde p)$, $(\widehat y, \widehat p)$ from the space $\B_R$ and denote the associated solutions by $(y_1,z_1,p_1,q_1)$ and $(y_2,z_2,p_2,q_2)$ respectively. 

We further denote $(y,z,p,q)= (y_1-y_2,  z_1-z_2, p_1-p_2, q_1-q_2)$ which satisfies the following set of equations
\begin{align}\label{sys_diff_1}
	&\begin{dcases}
		y_t + 	 y_{xxxx} +\gamma 	 y_{xx}  = z_x  + \widehat y \, \widehat y_x -  \widetilde y\, \widetilde y_x   & \textnormal{in } Q_T, \\
		z_t - 	 z_{xx} + \beta 	 z_x = 	 y_x  & \textnormal{in } Q_T,\\
		y= 	 y_x= 	 z=0        &\textnormal{in } \Sigma_T, \\
		y(0)= 0, \ \ 	 z(0) = 0  &\textnormal{in } (0,1),
	\end{dcases} \\ \label{sys_diff_2}
	&\begin{dcases}
		-p_t + p_{xxxx} +\gamma p_{xx}  = -q_x + \alpha y\mathds{1}_{\mathcal O}  + \widetilde y \, \widetilde p_x - \widehat y\, \widehat p_x  
		  & \textnormal{in } Q_T, \\
		-q_t - q_{xx} - \beta q_x =  -p_x + (1-\alpha) 	 z \mathds{1}_{\mathcal O} & \textnormal{in } Q_T,\\
		p=p_x=q=0        &\textnormal{in } \Sigma_T, \\
		p(T)=0, \ \ q(T) =0  &\textnormal{in } (0,1) .
	\end{dcases}
\end{align}

We first compute 
\begin{align*}
	\bigg(	\int_0^T \int_0^1 |\widetilde y \, \widetilde p_x - \widehat y\, \widehat p_x|^2 \bigg)^{\frac{1}{2}} 
	&\leq  
	\bigg(2	\int_0^T \int_0^1 |\widetilde y  (\widetilde p_x - \widehat p_x)|^2 + |\widehat p_x (\widetilde  y - \widehat y)|^2 \bigg)^{\frac{1}{2}} \\
	& \leq \sqrt{2} \|\widetilde y\|_{L^\infty(L^2)}\| \widetilde p - \widehat p \|_{L^2(H^2_0)} + \sqrt{2} \|\widehat p\|_{L^2(H^2_0)} \|\widetilde y - \widehat y \|_{L^\infty(L^2)}   .
\end{align*}
Similarly, one has 
\begin{align*}
	\|\widetilde y \, \widetilde y_x - \widehat y\, \widehat y_x\|_{L^2(Q_T)} 
	\leq \sqrt{2} \|\widetilde y\|_{L^\infty(L^2)}\| \widetilde y - \widehat y \|_{L^2(H^2_0)} + \sqrt{2} \|\widehat y\|_{L^2(H^2_0)} \|\widetilde y - \widehat y \|_{L^\infty(L^2)}.
	\end{align*}
Using the above two facts, the solution to \eqref{sys_diff_1}--\eqref{sys_diff_2} satisfies 
\begin{equation}
	\begin{aligned}
	&	\|y\|_{L^\infty(L^2) \cap L^2(H^2_0)} 
	+\|z\|_{L^\infty(L^2) \cap L^2(H^1_0)}
	+\|p\|_{L^\infty(L^2) \cap L^2(H^2_0)} 	+ \|q\|_{L^\infty(L^2) \cap L^2(H^1_0)} \\
	&\leq C_0 \Big(\| \widetilde y\, \widetilde y_x - \widehat y \, \widehat y_x\|_{L^2(Q_T)} + \| \widetilde y \, \widetilde p_x -  \widehat y\, \widehat p_x\|_{L^2(Q_T)} \Big)\\
&	\leq \sqrt{2}C_0 \|\widetilde y\|_{L^\infty(L^2)} 
	\Big( \| \widetilde y - \widehat y \|_{L^2(H^2_0)} +  \| \widetilde p - \widehat p \|_{L^2(H^2_0)}\Big) \\
	& \qquad \quad + \sqrt{2}C_0 \| \widetilde y - \widehat y \|_{L^\infty(L^2)} \Big( \|\widehat y\|_{L^2(H^2_0)} + \|\widehat p\|_{L^2(H^2_0)} \Big) \\
	& \leq \sqrt{2}C_0 R 	\Big( \| \widetilde y - \widehat y \|_{L^\infty(L^2)\cap L^2(H^2_0)} + \| \widetilde p - \widehat p \|_{L^\infty(L^2)\cap L^2(H^2_0)}\Big) .
	\end{aligned}
\end{equation}
Now,  choose $R>0$ in such a way that $\sqrt{2}C_0 R<1$, so that  the map $\Lambda$ is contracting. Thus, using the Banach fixed point theorem,  there exists a unique fixed point  of $\Lambda$ in $\B_R$, which gives the  unique solution $(y,z,p,q)$ to \eqref{sys_equiv_1}--\eqref{sys_equiv_2}. 

The proof is complete.
\end{proof}

\section{Carleman estimates for different $\boldsymbol \alpha$}\label{sec:carleman_estimates}

This section is devoted to obtain several Carleman inequalities depending on the parameter $\alpha$, namely for the 
following cases:
\begin{itemize}
	\item[(i)]  $\alpha=0$
	\item[(ii)] $\alpha =1$, 
	\item[(iii)]   $\alpha \in (0,1)$. 
\end{itemize}
Note that, the cases $\alpha=0$ and $\alpha=1$ are more interesting  since we remove one of the two observing variables.  In fact, depending on the above three choices of $\alpha$, the strategy to obtain a suitable Carleman estimate also changes.

\medskip

Let us
define some Carleman weights which have been introduced  in the articles \cite{Sergio, Cerpa-Mercado-Pazoto}.

\smallskip 

\paragraph{\bf Weight functions} Recall that $\oo \cap \omega \neq \emptyset$ and therefore, there is an open set $\omega_0\subset \subset \oo  \cap \omega$.  In what follows,  we establish the Carleman estimates with the observation domain $\omega_0$.

Consider a function $\nu \in \C^4([0,1])$ satisfying   
\begin{align}\label{definition_nu} 
	\begin{dcases}
		\nu(x)>0 \  \ \forall x\in (0,1), \ \ \nu(0)=\nu(1)=0, \\
		|\nu^\prime(x) | \geq c >0 \ \  \forall x \in \overline{(0,1) \setminus \omega_0} \ \ \text{ for some } c > 0 .
	\end{dcases}
\end{align} 
In particular, we have  $\nu^{\prime}(0)>0$ and $\nu^\prime(1)<0$. 

Now, for some  constants $\lambda>1$ and $k>m>0$,   we define the weight functions 
\begin{align}\label{weight_function}
	\vphi_m(t,x)= \frac{e^{\lambda(1+\frac{1}{m})k \|\nu\|_{\infty}   }-  e^{\lambda\big( k\|\nu\|_{\infty} + \nu(x) \big)}}{t^m(T-t)^m}	, \quad \xi_{m}(t,x) = \frac{e^{\lambda\big( k\|\nu\|_{\infty} + \nu(x) \big)}}{t^m(T-t)^m}, \quad \forall (t,x) \in Q_T.
\end{align}
%
Here, observe that $\vphi_m$ and $\xi_m$ are positive functions in $[0,1]$ due to the choices of $\lambda$, $k$ and $m$. 

\medskip 

Some immediate results associated with the weights are following.
\begin{itemize} 
\item[--] For any $b>0$, there exists some constant $C>0$ such that 
\begin{align}\label{weight_estimates}
	|(e^{-2s\vphi_m} \xi^b_m)_x| \leq Cs\lambda \xi_m (e^{-2s\vphi_m} \xi^b_m),
\end{align}
where $\lambda>1$ and $k>m>0$.

\item[--] Similarly, for any $b>0$, there is some constant $C>0$ such that 
\begin{align}\label{weight_estimate-t}
	|(e^{-2s\vphi_m} \xi^b_m)_t| \leq Cs \xi_m^{1+\frac{1}{m}} (e^{-2s\vphi_m} \xi^b_m),
\end{align}
with $\lambda>1$ and $k>m>0$. 

\item[--] 
For any $m>1$, we  observe that
\begin{align} \notag 
	t^{m-1}(T-t)^{m-1} &\leq C T^{2m-2} , \\ \notag 
	\text{i.e., } \ 1 \leq C T^{2m-2} \frac{t(T-t)}{t^m (T-t)^m} &\leq C T^{2m-2} \xi_m^{1-1/m} \\
	\text{i.e., } \	\xi_m^{1/m} &\leq CT^{2m-2} \xi_m. \label{esti_time_deri}
\end{align}
Using this in \eqref{weight_estimate-t}, one has 
\begin{align}\label{weight_estimate-t-m}
	|(e^{-2s\vphi_m} \xi^b_m)_t| \leq C T^{2m-2} s \xi^2_m (e^{-2s\vphi_m} \xi^b_m), \quad \text{for } m>1. 
\end{align}

\end{itemize}

\medskip 

\paragraph{\bf Some standard Carleman estimates} 
Let us write the following Carleman estimates for the second order parabolic operator. 
\begin{theorem}\label{thm-Fur-Ima}
	Let $\vphi_m$ and $\xi_m$ be given by \eqref{weight_function} with $m\geq 1$ and  $r\in \mathbb R$.  Then, there exists positive constants $\lambda_0$, $s_0=\sigma_0(T^{2m-1}+T^{2m})$ with some $\sigma_0>0$ and $C$, such that we have 
	\begin{equation}\label{Notation_heat}
	\begin{aligned} 
	I_H(q;r): 
&	= s^{r} \lambda^{r+1} \iint e^{-2s\vphi_m} \xi_m^r |q|^2 + s^{r-2} \lambda^{r-1} \iint e^{-2s\vphi_m} \xi_m^{r-2} |q_x|^2 \\
& \qquad  \quad 	 + s^{r-4} \lambda^{r-3} \iint e^{-2s\vphi_m} \xi_m^{r-4} \left(|q_t|^2 +|q_{xx}|^2\right) 
	\\
&	\leq C \left( s^{r-3}\lambda^{r-3} \iint e^{-2s\vphi_m} \xi_m^{r-3}|q_t+q_{xx}|^2  + s^r \lambda^{r+1}\iint_{\omega_0}  e^{-2s\vphi_m} \xi_m^{r}|q|^2\right),
	\end{aligned} 
\end{equation}
for every $\lambda \geq \lambda_0$, $s\geq s_0$, and 
$q\in L^2(0,T; H^2(0,1)\cap H^1_0(0,1))\cap H^1(0,T; L^2(0,1))$.  
\end{theorem} 
The above result is well-known due to the pioneer work by Fursikov and Imanuvilov \cite{Fur-Ima}.

\smallskip 

We also need the following Carleman estimate for the fourth order parabolic operator. 
\begin{theorem}\label{thm-Zhou}
		Let $\vphi_m$ and $\xi_m$ be given by \eqref{weight_function} with $m\geq 2/5$.   Then, there exists positive constants $\lambda_0$, $s_0=\sigma_0(T^{2m-2/5}+T^{2m})$ with some $\sigma_0>0$ and $C$, such that we have 
	\begin{equation}\label{Notation_KS}
		\begin{aligned} 
		I_{KS}(q):&=s^7\lambda^8 \iint e^{-2s\vphi_m} \xi^7_m |q|^2 + s^5\lambda^6 \iint e^{-2s\vphi_m} \xi^5_m |q_{x}|^2 +  s^3\lambda^4\iint e^{-2s\vphi_m} \xi^3_m |q_{xx}|^2 \\
		& \qquad \qquad 	+ s\lambda^2 \iint e^{-2s\vphi_m} \xi_m |q_{xxx}|^2   +	s^{-1} \iint e^{-2s\vphi_m}\xi^{-1}_m (|q_t|^2+ |q_{xxxx}|^2\big)\\
		&\leq 
		C\left(\iint e^{-2s\vphi_m} |-q_t + q_{xxxx} + \gamma q_{xx}|^2 + s^7 \lambda^8\iint_{\omega_0} e^{-2s\vphi_m} \xi^7_m |q|^2    \right),
			\end{aligned}
	\end{equation}	
for every $\lambda \geq \lambda_0$, $s\geq s_0$, and 
$q\in L^2(0,T; H^4(0,1)\cap H^2_0(0,1))\cap H^1(0,T; L^2(0,1))$.  
	\end{theorem}
The proof for the above result can be found in \cite{Zhou}. We also refer the work \cite{Cerpa-Mercado-Pazoto}.


\subsection{Carleman estimate for the case when $\boldsymbol{\alpha=0}$}\label{carleman-alpha=0}

The adjoint system \eqref{adj_sys} for the case $\alpha=0$ reads as 
\begin{align}\label{adj_sys_a_0}
	&\begin{dcases}
		-u_t + u_{xxxx} +\gamma u_{xx} = F_1 -w_x   & \textnormal{in } Q_T, \\
		-w_t - w_{xx} - \beta w_x = F_2  -u_x + \theta \mathds{1}_{\mathcal O} & \textnormal{in } Q_T,\\
		\zeta_t + \zeta_{xxxx} +\gamma \zeta_{xx} = F_3+\theta_x  & \textnormal{in } Q_T, \\
		\theta_t - \theta_{xx} + \beta \theta_x = F_4 + \zeta_x & \textnormal{in } Q_T,\\
		u=u_x=w=\zeta=\zeta_x=\theta=0        &\textnormal{in } \Sigma_T, \\
		u(T)=0, \ \ w(T) =0, &\textnormal{in } (0,1),
		\\ \zeta(0)=\zeta_0, \ \ \theta(0) =\theta_0  &\textnormal{in } (0,1).
 	\end{dcases}
\end{align}

Before going to prove  the required Carleman inequality associated to the system \eqref{adj_sys_a_0}, we recall the weight functions  $\vphi_m$ and $\xi_m$ as given by \eqref{weight_function}. We also define  
\begin{align}\label{max_min}
	\begin{dcases} 
		\widehat \vphi_m(t) = \max_{x\in [0,1]} \vphi_m(t,x)= \vphi_m(t,0)=\vphi_m(t,1), \\ 
		\xi^*_m(t) = \min_{x\in [0,1]} \xi_m(t,x)= \xi_m(t,0)=\xi_m(t,1).
	\end{dcases}
\end{align}

We now a prove the following Carleman inequality for the adjoint states  $(u,w,\zeta, \theta)$ with 
$$(u,\zeta)\in [L^2(0,T;H^4(0,1)\cap H^2_0(0,1)) \cap H^1(0,T; L^2(0,1))]^2,$$ 
$$(w,\theta)\in [L^2(0,T;H^2(0,1)\cap H^1_0(0,1)) \cap H^1(0,T; L^2(0,1))]^2,$$
(of course, one should start with regular data for instance $(\zeta_0,\theta_0)\in H^2_0(0,1)\times H^1_0(0,1)$ and $F_j\in L^2(Q_T)$ for $j=1,2,3,4$).

\begin{theorem}[Carleman inequality: the case $\alpha=0$]\label{Theorem-Carleman-second-app}
	Let $(\vphi_m, \xi_m)$ and $(\widehat \vphi_m, \xi^*_m)$ be given by \eqref{weight_function} and \eqref{max_min} respectively with $m\geq 2$. Then, there exist positive constants $\lambda^*$, $s^*:= \sigma^* (T^m+ T^{2m-2/5}+T^{2m-1} + T^{2m})$ with some $\sigma^*>0$ and $C$ such that we have the following estimate satisfied by the solution to \eqref{adj_sys_a_0}:
	\begin{multline}\label{Carleman-second-app}
		I_{KS}(u) + I_{H}(w;7) + s^7\lambda^8 \iint e^{-2s\vphi_m}\xi^7_m |\zeta_x|^2 + s^7\lambda^8 \iint e^{-2s\widehat \vphi_m}(\xi^*_m)^7 |\zeta|^2 + I_H(\theta;9) \\
		\leq 
			 C \bigg[\iint e^{-2s\vphi_m} \left(|F_1|^2  + s^5\lambda^5  \xi^5_m|F_3|^2 + s^7\lambda^8 \xi^7_m |F_4|^2\right) + s^{37}\lambda^{22}\iint e^{-10s\vphi_m + 8s\widehat \vphi_m} \xi^{37}_m |F_2|^2 \\
	+	  s^{39}\lambda^{24} \iint_{\omega_0} e^{-10s\vphi_m + 8s\widehat \vphi_m} \xi^{39}_m |u|^2 +   s^{41}\lambda^{26} \iint_{\omega_0} e^{-10s\vphi_m + 8s\widehat \vphi_m} \xi^{41}_m|w|^2\bigg]
	\end{multline}
	for all $\lambda\geq \lambda^*$ and  $s\geq s^*$, where $I_{KS}(\cdot)$ and $I_H(\cdot;\cdot)$ are given by \eqref{Notation_KS} and \eqref{Notation_heat} respectively. 
\end{theorem}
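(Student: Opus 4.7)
The argument will follow the template of \Cref{Thm-Carleman-first-app} (four individual Carleman estimates, then absorption of the local observations on $\mathcal O$), but the loss of the coupling $\alpha\zeta\mathds{1}_{\mathcal O}$ in the $u$-equation when $\alpha=0$ forces a more indirect treatment of $\zeta$. The new key ingredient will be the first-order coupling $\zeta_x=\theta_t-\theta_{xx}+\beta\theta_x$ coming from the fourth equation of \eqref{adj_sys_a_0}, together with a KS-type Carleman estimate applied to the differentiated equation for $\zeta_x$.

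\emph{Step 1 (individual Carleman estimates).} I would first record four separate inequalities in the spirit of Lemmas~\ref{lemma-carleman-u-app-1}--\ref{lemma-carleman-theta-app-1}, specialized to $\alpha=0$: the $u$-estimate \eqref{carleman_u-app-1} with no $|\zeta|^2$ on the right (the $\alpha\zeta$-coupling is gone), the $w$-estimate \eqref{carleman_w-app-1} retaining $\iint e^{-2s\vphi_m}|\theta|^2$, a parameter-boosted Fursikov--Imanuvilov estimate for $\theta$ of the form
\begin{equation*}
I_H(\theta;9)\le C\Bigl(\iint e^{-2s\vphi_m}(s\xi_m)^6\lambda^6|\zeta_x|^2 + s^9\lambda^{10}\iint_{\omega_0} e^{-2s\vphi_m}\xi_m^9|\theta|^2\Bigr),
\end{equation*}
and a KS-type Carleman estimate applied to $\zeta_x$, which satisfies $(\zeta_x)_t+(\zeta_x)_{xxxx}+\gamma(\zeta_x)_{xx}=\theta_{xx}$ and vanishes on the boundary along with $\zeta$. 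The last one will deliver $I_{KS}(\zeta_x)$ on the left (in particular the desired $s^7\lambda^8\iint e^{-2s\vphi_m}\xi_m^7|\zeta_x|^2$), at the cost of $\iint e^{-2s\vphi_m}|\theta_{xx}|^2$ and a local observation of $\zeta_x$ on $\omega_0$.

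\emph{Step 2 (combination and absorption of the cross terms).} Summing the four inequalities and using $1\le CT^{2m}\xi_m$, the integrals $\iint e^{-2s\vphi_m}(|w_x|^2+|u_x|^2+|\theta|^2+|\theta_{xx}|^2)$ are swallowed by the leading LHS terms of $I_{KS}(u)$, $I_H(w;3)$ and $I_H(\theta;9)$ for $s,\lambda$ large enough. The cross term $\iint e^{-2s\vphi_m}(s\xi_m)^6\lambda^6|\zeta_x|^2$ is absorbed by $s^7\lambda^8\iint e^{-2s\vphi_m}\xi_m^7|\zeta_x|^2$ as soon as $s\xi_m\lambda^2$ is sufficiently large, which is precisely why the boosted index $9$ (instead of the classical $3$) is chosen for $I_H(\theta;\cdot)$. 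The local observation of $\zeta_x$ on $\omega_0$ is replaced, via $\zeta_x=\theta_t-\theta_{xx}+\beta\theta_x$, by a weighted local observation of $\theta_t,\theta_{xx},\theta_x$, which after integration by parts in $t$ and $x$ is dominated by a weighted local $|\theta|^2$ integral. This combines with the $\theta$-local observations already present into a single term that is then treated via the identity
\begin{equation*}
\theta = -w_t - w_{xx} - \beta w_x + u_x \quad\text{on } \omega_0,
\end{equation*}
coming from the second equation of \eqref{adj_sys_a_0}: inserting a cutoff $\phi\in\C^\infty_c(\omega_0)$ and mirroring Steps~2--3 of the proof of \Cref{Thm-Carleman-first-app}, repeated integrations by parts in $t$ and $x$ reduce the $|\theta|^2$-observation to purely local $|u|^2$ and $|w|^2$ integrals at the cost of the inflated powers $s^{39}\lambda^{24}\xi_m^{39}$ and $s^{41}\lambda^{26}\xi_m^{41}$ appearing on the right of \eqref{Carleman-second-app}.

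\emph{Step 3 ($|\zeta|^2$ for free via Poincar\'e).} The piece $s^7\lambda^8\iint e^{-2s\widehat\vphi_m}(\xi^*_m)^7|\zeta|^2$ is obtained from the $|\zeta_x|^2$ bound by Poincar\'e applied $x$-wise: since $e^{-2s\widehat\vphi_m(t)}(\xi^*_m(t))^7$ depends only on $t$ and is dominated pointwise by $e^{-2s\vphi_m(t,x)}\xi_m(t,x)^7$, and since $\zeta\big|_{x=0,1}=0$,
\begin{equation*}
\iint e^{-2s\widehat\vphi_m}(\xi^*_m)^7|\zeta|^2 \le C\iint e^{-2s\vphi_m}\xi_m^7|\zeta_x|^2.
\end{equation*}
The main obstacle is the bookkeeping in Step~2: each integration-by-parts cross term must either be absorbed by a leading LHS integral or be controlled by a local $u$- or $w$-observation with the correct inflated weight. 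The unusual exponential $e^{-10s\vphi_m+8s\widehat\vphi_m}$ on the right of \eqref{Carleman-second-app} reflects exactly the trade-off between the space-localized weight $e^{-2s\vphi_m}$ (natural for $u$, $w$, $\theta$ and $\zeta_x$) and the time-only weight $e^{-2s\widehat\vphi_m}$ (needed to pass the Poincar\'e step on $\zeta$) which has to be performed when the $\zeta$-information is traded back and forth across the four equations.
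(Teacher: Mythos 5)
Your plan reproduces the paper's architecture almost exactly: the same four individual estimates (the $u$-estimate without $\zeta$ as in \Cref{lemma-carleman-u-app-2}, the $w$-estimate with $\theta$ as in \Cref{lemma-carleman-w-app-2}, the boosted $I_H(\theta;9)$ estimate with a $|\zeta_x|^2$ source as in \Cref{lemma-carleman-theta-app-2}, and an estimate for the differentiated variable $\zeta_x$), the same absorption scheme with $1\leq CT^{2m}\xi_m$, the elimination of the local $\zeta_x$-term via $\zeta_x=\theta_t-\theta_{xx}+\beta\theta_x$, then of the local $\theta$-term via $\theta=-w_t-w_{xx}-\beta w_x+u_x$, and the Poincar\'e step producing the $e^{-2s\widehat\vphi_m}(\xi^*_m)^7|\zeta|^2$ term (this is exactly \eqref{subestimate}). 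However, there is one genuine gap, and it sits precisely at the crux of the $\alpha=0$ case: you assert that $\widetilde\zeta:=\zeta_x$ ``vanishes on the boundary along with $\zeta$'' and that therefore a KS-type Carleman estimate delivers the full $I_{KS}(\zeta_x)$, at the cost only of a $|\theta_{xx}|^2$ source and a local observation of $\zeta_x$. This step fails. The Carleman estimate of \cite{Zhou} used for $u$ and $\zeta$ requires \emph{clamped} conditions, i.e.\ both the function and its first derivative vanishing on $\Sigma_T$. For $\widetilde\zeta=\zeta_x$ one has $\widetilde\zeta=0$ on $\Sigma_T$, but $\widetilde\zeta_x=\zeta_{xx}$ does \emph{not} vanish there (see \eqref{eq:zeta_x}), so no estimate of the form $I_{KS}(\zeta_x)\leq\cdots$ is available.

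This is exactly why the paper does not use the $u$-type lemma for $\zeta_x$ but instead invokes \cite[Theorem~3.5]{Cerpa_Careno}, which tolerates the nonhomogeneous Neumann trace at the price of the boundary terms
\begin{equation*}
s^5\lambda^5\int_0^T e^{-2s\widehat\vphi_m}(\xi_m^*)^5\bigl(|\zeta_{xx}(t,0)|^2+|\zeta_{xx}(t,1)|^2\bigr)
\end{equation*}
on the right-hand side of \eqref{carleman-zeta_x-auxi}. Absorbing these traces is a nontrivial extra layer of the proof: the paper combines a weighted maximal-regularity estimate \eqref{regularity_estimate} for $\rho\zeta$ in $L^2(0,T;H^4(0,1))$ with the trace/interpolation inequality \eqref{regularity-2}. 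As a result, \Cref{Lemma-Careleman-zeta_x} controls only $|\zeta_x|^2$, $|\zeta|^2$ and a weighted $L^2(H^4)$ norm of $\zeta$ --- not the full set of derivative terms in $I_{KS}(\zeta_x)$ --- and that weighted $H^4$ term is precisely what is then consumed (following \cite[Step~4--Proposition~3.6]{Cerpa_Careno}) to dominate the higher-order derivatives of $\zeta$ generated when you integrate by parts to trade the local $\zeta_x$ observation for a local $\theta$ observation; this trade is also where the hybrid weight $e^{-6s\vphi_m+4s\widehat\vphi_m}$ (and ultimately $e^{-10s\vphi_m+8s\widehat\vphi_m}$) actually originates, rather than from the Poincar\'e step as you suggest. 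Without the boundary-term machinery, your Step~1 estimate for $\zeta_x$ does not exist, and your Step~2 has no substitute for the missing derivative control of $\zeta$.
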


\smallskip 

Let us shortly point out the idea behind the proof for the  Carleman inequality \eqref{Carleman-second-app}.
\begin{itemize}
		\item[(i)] For the variables $u$  satisfying fourth order parabolic equation, we use the Carleman estimate given by \Cref{thm-Zhou}. 
		

	\item[(ii)]  We observe that the Carleman estimate for $\zeta$   will always be associated with an observation integral of $\zeta$  and there is no chance to absorb it by any of the leading integrals. In fact, there is a coupling by $\zeta_x$ to the equation of $\theta$ and therefore, we are going to use a Carleman estimate for the variable $\zeta_x$, see \Cref{Lemma-Careleman-zeta_x}.
	
	In this context, we recall the work \cite{Cerpa_Careno}, where such a Carleman estimate has been established.

	\item[(iii)] For $w$ and $\theta$, we use the classical Carleman inequalities by Fursikov and Imanuvilov (see \Cref{thm-Fur-Ima}) for the heat equation, maybe with different powers of Carleman parameters. 
\end{itemize}

\begin{remark}
	In the Carleman estimate \eqref{Carleman-second-app}, there appears the weight function $e^{-10s\vphi_m + 8s\widehat \vphi_m}$.    But, by \Cref{Lemma-auxiliary}, there exists some $c_0>0$ such that
	\begin{align*}
		-10s\vphi_m + 8s\widehat \vphi_m \leq \frac{-c_0 s}{t^m (T-t)^m},
	\end{align*}
	which ensures obtaining a suitable observability inequality from the Carleman estimate \eqref{Carleman-second-app}, see \Cref{Obser-ineq}. 
\end{remark}


\smallskip

Let us give the required Carleman estimates for the quantities $u$, $w$, $\zeta_x$ and $\theta$ as mentioned above.
%

%
\begin{lemma}[Carleman inequality for $u$, the case $\alpha=0$]\label{lemma-carleman-u-app-2}
	Let $\vphi_m$ and $\xi_m$ be as given by  \eqref{weight_function} with $m\geq 2/5$. Then, there exist positive constants $\bar \lambda_1$, $\bar s_1:=\bar \sigma_1 (T^{2m}+T^{2m-2/5})$ with some $\sigma_1>0$ and $C$, such that we have the following estimate for $u\in L^2(0,T;H^4(0,1)\cap H^2_0(0,1)) \cap H^1(0,T; L^2(0,1))$,
	\begin{align} \label{carleman_u-app-2}
		I_{KS}(u)
		\leq C \left(\iint e^{-2s\vphi_m} (|F_1|^2+ |w_x|^2)  +s^7\lambda^8\iint_{\omega_0}  e^{-2s\vphi_m} \xi^7_m |u|^2      \right)
	\end{align}
	for all $\lambda\geq \bar \lambda_1$, $s\geq \bar s_1$, where $I_{KS}(\cdot)$ is defined by \eqref{Notation_KS}.    
\end{lemma}	

For the component $w$, we shall use the standard Carleman inequality for heat equations. 
\begin{lemma}[Carleman inequality for $w$, the case $\alpha=0$]\label{lemma-carleman-w-app-2}
	Let $\vphi_m$ and $\xi_m$ be defined as in \eqref{weight_function} with $m\geq 1$. Then, there exist positive constants $\bar \lambda_2$, $\bar s_2:=\bar \sigma_2 (T^{2m} +T^{2m-1})$ with some $\bar \sigma_2>0$ and $C$, such that we have the following estimate for $w\in L^2(0,T; H^2(0,1)\cap H^1_0(0,1)) \cap H^1(0,T; L^2(0,1))$,
	\begin{align}\label{carleman_w-app-2}
		I_H(w;7)
		\leq C \left(s^4\lambda^4 \iint e^{-2s\vphi_m}\xi^4_m \big(|F_2|^2 + |u_x|^2 +|\theta|^2\big) + s^7\lambda^8\iint_{\omega_0}  e^{-2s\vphi_m} \xi^7_m |w|^2      \right)
	\end{align}
	for all $\lambda\geq \bar \lambda_2$ and $s\geq \bar s_2$, where $I_{H}(\cdot; \cdot)$ is defined by \eqref{Notation_heat}.     
\end{lemma}

We now write the Carleman estimate for $\zeta_x$, thanks to the work \cite{Cerpa_Careno}.
\begin{lemma}[Carlemn inequality for $\zeta_x$, the case $\alpha=0$]\label{Lemma-Careleman-zeta_x}
	Let $(\vphi_m, \xi_m)$ and $(\widehat \vphi_m, \xi^*_m)$ be given by \eqref{weight_function} and \eqref{max_min} respectively with $m\geq 2$, $k>m$. Then, there exist positive constants $\bar \lambda_3$, $\bar s_3:=\bar \sigma_3 (T^m+ T^{2m})$ with some $\bar \sigma_3>0$ and $C$ such that we have the following estimate for $\zeta_x \in L^2(0,T;H^3(0,1)\cap H^1_0(0,1))$,
	\begin{multline}\label{eq: Carleman-zeta_x}
		s^7 \lambda^8 \iint \left(e^{-2s\vphi_m } \xi_m^7 |\zeta_x|^2 + e^{-2s\widehat \vphi_m} (\xi_m^*)^7 |\zeta|^2\right)  + s^{5-\frac{2}{m}} \lambda^{5}\|e^{-s\widehat \vphi_m} (\xi^*_m)^{\frac{5}{2}-\frac{1}{m}} \zeta  \|^2_{L^2(0,T; H^4(0,1))} 
		\\
		\leq C\bigg[s^5\lambda^5 \iint    e^{-2s\vphi_{m}}  \xi^5_m \big(|\theta_x|^2 +  |F_3|^2\big)  
		+ T^{10m} \iint   e^{-2s\vphi_{m}} \xi^5_m |\theta_{xx}|^2 
		+  s^7 \lambda^8 \iint_{\omega_0}  e^{-2s\vphi_m}\xi_m^7 |\zeta_x|^2\bigg] 
	\end{multline}
	for all $\lambda\geq \bar \lambda_3$ and $s\geq \bar  s_3$.
\end{lemma}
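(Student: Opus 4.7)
The plan is to reduce the statement to a Carleman inequality for the fourth-order parabolic equation satisfied by the auxiliary variable $v := \zeta_x$, borrowing the technology of \cite{Cerpa_Careno}, and then transfer the resulting weighted bounds back to $\zeta$ itself.

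First I would differentiate the third equation of \eqref{adj_sys_a_0} in $x$ to obtain
\begin{equation*}
v_t + v_{xxxx} + \gamma v_{xx} = \theta_{xx} \quad \text{in } Q_T, \qquad v(t,0)=v(t,1)=0,
\end{equation*}
where the homogeneous Dirichlet condition on $v$ inherits from $\zeta_x\equiv 0$ on $\Sigma_T$. Note that $v_x=\zeta_{xx}$ is \emph{not} required to vanish on $\Sigma_T$, so this is a ``Dirichlet-only'' fourth-order problem rather than a clamped one. The core of the proof is then to apply the Cerpa--Carre\~no Carleman inequality (\cite{Cerpa_Careno}) to $v$ with the weight $(\vphi_m,\xi_m)$. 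This yields, for $s$ and $\lambda$ sufficiently large,
\begin{equation*}
s^7\lambda^8\iint e^{-2s\vphi_m}\xi_m^7|v|^2 + \text{(higher-derivative terms)} \;\leq\; C\iint e^{-2s\vphi_m}\xi_m^{?}|\theta_{xx}|^2 + C\,s^7\lambda^8\iint_{\omega_0} e^{-2s\vphi_m}\xi_m^7|v|^2 ,
\end{equation*}
after absorbing boundary contributions at $x=0,1$ with the help of the sign conditions $\nu'(0)>0$, $\nu'(1)<0$ built into \eqref{definition_nu}. By performing one integration by parts in $x$ on the source term $\iint e^{-2s\vphi_m}\xi_m^a v\,\theta_{xx}$ (moving one derivative from $\theta$ onto the weight and onto $v$), I would split the right-hand side into a $|\theta_x|^2$ contribution carrying extra powers of $s\lambda$ (and hence the prefactor $s^5\lambda^5 T^{4m+4}$) and a residual $|\theta_{xx}|^2$ contribution carrying only the $T^{10m}$ factor coming from \eqref{esti_time_deri}, matching the right-hand side of \eqref{eq: Carleman-zeta_x}.

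Next I would transfer these bounds from $v$ to $\zeta$. Since $\zeta(t,0)=0$, one has $\zeta(t,x)=\int_0^x v(t,y)\,dy$, hence $|\zeta(t,x)|^2\le \int_0^1 |v(t,y)|^2\,dy$; multiplying by the uniform-in-$x$ weight $e^{-2s\widehat\vphi_m}(\xi_m^*)^7$ and using $\widehat\vphi_m(t)\ge\vphi_m(t,\cdot)$, $\xi_m^*(t)\le\xi_m(t,\cdot)$ converts the $\zeta_x$-bound into the desired $\zeta$-bound
\begin{equation*}
s^7\lambda^8\iint e^{-2s\widehat\vphi_m}(\xi_m^*)^7|\zeta|^2 \;\le\; C\,s^7\lambda^8\iint e^{-2s\vphi_m}\xi_m^7|\zeta_x|^2.
\end{equation*}
The $H^4$-in-space norm of $e^{-s\widehat\vphi_m}(\xi_m^*)^{5/2-1/m}\zeta$ is then recovered by combining the $L^2$-Carleman bounds on $v$, $v_x=\zeta_{xx}$, $v_{xx}=\zeta_{xxx}$, and $v_{xxx}=\zeta_{xxxx}$ delivered by the $v$-estimate (together with $\zeta_{xxxx}=\theta_x-\zeta_t-\gamma\zeta_{xx}$ if needed to close the top-order term), modulated by the Poincar\'e inequality and the dominations above to raise the weight from $\vphi_m$ to $\widehat\vphi_m$.

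The main obstacle is the boundary control at $x=0,1$: because only $v=0$ (and not $v_x=0$) is available on $\Sigma_T$, applying the standard Carleman conjugation for the fourth-order operator produces uncontrolled boundary terms involving $v_x$ and $v_{xx}$ at $x\in\{0,1\}$. These must be reabsorbed into the bulk by exploiting the strict signs $\nu'(0)>0$, $\nu'(1)<0$ from \eqref{definition_nu}, which is exactly the delicate step carried out in \cite{Cerpa_Careno} and why the hypothesis $m\ge 2$ (rather than the weaker $m\ge 2/5$ used in \Cref{lemma-carleman-u-app-2}) is imposed: the additional powers of $t(T-t)$ are needed to dominate the boundary weight $e^{-2s\widehat\vphi_m}(\xi_m^*)^r$ against $e^{-2s\vphi_m}\xi_m^r$ uniformly on the lateral boundary when performing the $x$-integration by parts. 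The bookkeeping of powers of $s$, $\lambda$ and $T$ on the right-hand side then follows from \eqref{weight_estimates}, \eqref{weight_estimate-t-m} and \eqref{esti_time_deri} applied systematically throughout the integrations by parts.
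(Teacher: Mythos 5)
Your overall architecture (differentiate the $\zeta$-equation, apply the Carleman estimate of \cite{Cerpa_Careno} to $v=\zeta_x$, then transfer to $\zeta$ by Poincar\'e since $\zeta(t,0)=0$) matches the paper's first two steps, but there is a genuine gap at precisely the point you flag as ``the main obstacle''. The Carleman inequality of \cite[Theorem 3.5]{Cerpa_Careno} for the Dirichlet-only problem does \emph{not} absorb the lateral boundary contributions via the sign conditions $\nu^\prime(0)>0$, $\nu^\prime(1)<0$: it leaves on the right-hand side the uncontrolled trace terms
\begin{equation*}
s^5\lambda^5\int_0^T e^{-2s\widehat \vphi_m}(\xi_m^*)^5\left(|\zeta_{xx}(t,0)|^2+|\zeta_{xx}(t,1)|^2\right)\dt ,
\end{equation*}
exactly as quoted in \eqref{carleman-zeta_x-auxi}. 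Sign conditions on $\nu^\prime$ kill boundary terms only when the conjugated variable \emph{and} its first derivative vanish on $\Sigma_T$ (the clamped case); here $v_x=\zeta_{xx}$ is free on the boundary, and no choice of $\nu$ makes these traces disappear. Your proposal provides no mechanism to estimate them, so the argument does not close.

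The paper's way out --- and the idea missing from your proposal --- is a weighted maximal-regularity argument. One sets $\rho(t)=s^{5/2-1/m}\lambda^{5/2}e^{-s\widehat \vphi_m}(\xi^*_m)^{5/2-1/m}$ and observes that $\zeta_\rho:=\rho\zeta$ solves a \emph{clamped} fourth-order parabolic problem with source $\rho\theta_x-\rho_t\zeta$ and zero initial data, so \Cref{Prop-appendix-regularity} yields the bound \eqref{regularity_estimate} on $\|\zeta_\rho\|_{L^2(0,T;H^4(0,1))}$. This single estimate is what produces both the $L^2(H^4)$ term on the left of \eqref{eq: Carleman-zeta_x} --- your alternative of reading $H^4$ bounds off the Carleman estimate for $v$ is unavailable, since the estimate actually at hand controls only $|\zeta_x|^2$ and not $v_x,v_{xx},v_{xxx}$ or $\zeta_t$, precisely because of the boundary issue --- and, via the trace-interpolation inequality \eqref{regularity-2}, the control of the boundary terms by the weighted $L^2(H^1)$ and $L^2(H^4)$ norms of $\zeta$, which are then absorbed into the left-hand side. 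This is also the true origin of the hypothesis $m\geq 2$ (one needs $5-2/m\geq 4$ and $5/2-1/m\geq 2$ so that the interpolation terms in \eqref{regularity-2} can be absorbed, not boundary-weight domination as you suggest) and of the split on the right-hand side of \eqref{eq: Carleman-zeta_x}: the $s^5\lambda^5T^{4m+4}|\theta_x|^2$ term comes from the source $\rho\theta_x$ in the regularity estimate, not from an integration by parts on the Carleman source term, while the $T^{10m}|\theta_{xx}|^2$ term comes from the Carleman estimate itself.
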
 

The proof of \Cref{Lemma-Careleman-zeta_x} can be verified using the techniques developed in \cite[Section 3.2]{Cerpa_Careno}. For sake of completeness, we give a short sketch below. 
\begin{proof}[Proof of \Cref{Lemma-Careleman-zeta_x}]
	Recall the equation of $\zeta$ from the system \eqref{adj_sys_a_0} and consider the following equation for $\widetilde \zeta:= \zeta_x$, given by 
	\begin{align}\label{eq:zeta_x}
		\begin{dcases}
			\widetilde \zeta_t + 	\widetilde \zeta_{xxxx} + \gamma 	\widetilde \zeta_{xx} =  \theta_{xx} + F_{3,x}  &\text{in } Q_T, \\
			\widetilde \zeta=0, \ \ 	\widetilde \zeta_x= 	\zeta_{xx} &\text{in }\Sigma_T, \\
			\widetilde \zeta(0, \cdot)= 0 &\text{in } (0,1),	
		\end{dcases}
	\end{align}
	where it is clear that $\theta_{xx}\in L^2(Q_T)$ and   we have  $\widetilde \zeta_x(\cdot) \in L^2(0,T)$ in $\Sigma_T$.  
	
	Now, thanks to \cite[Theorem 3.5]{Cerpa_Careno}, we have the following auxiliary estimate for $\zeta_x$,
	\begin{multline}\label{carleman-zeta_x-auxi}
		s^7 \lambda^8 \iint e^{-2s\vphi_m } \xi_m^7 |\zeta_x|^2 \leq 
		C \bigg[ \iint e^{-2s\vphi_m}  |\theta_{xx}|^2 + s^2\lambda^2 \iint e^{-2s\vphi_m} \xi_m^2 |F_3|^2   
		\\
		+ s^5 \lambda^5 \int_0^T e^{-2s\widehat \vphi_m} (\xi_m^*)^5 \big(|\zeta_{xx}(t,0)|^2 + |\zeta_{xx}(t,1)|^2\big)    +  
		s^7\lambda^8 \iint_{\omega_0} e^{-2s\vphi_m} \xi^7_m |\zeta_x|^2 \bigg],
	\end{multline}
	for every $\lambda\geq c_1$ and $s\geq c_2(T^{m}+T^{2m-2/5})$ for some $c_1, c_2>0$.

	First observe that,
	\begin{align}\label{subestimate} 
		s^7 \lambda^8 \iint e^{-2s\widehat \vphi_m } (\xi^*_m)^7 |\zeta|^2  \leq C 	s^7 \lambda^8 \iint e^{-2s\vphi_m } \xi_m^7 |\zeta_x|^2,
	\end{align}
	since $\zeta(t,0)=\zeta(t,1)=0$, $\forall t\in [0,T]$. 
	
	\smallskip
	Next, we define $\rho(t)=s^{5/2-1/m}\lambda^{5/2}e^{-s\widehat \vphi_m}(\xi^*_m)^{5/2-1/m}$, $\forall t\in [0,T]$, and consider the equation satisfied by $\zeta_{\rho}:=\rho \zeta$, 
	\begin{align*}
		\begin{dcases} 
			(\zeta_\rho)_t + (\zeta_\rho)_{xxxx} +\gamma (\zeta_\rho)_{xx} = \rho \theta_x + \rho F_3  - \rho_t \zeta &\text{in } Q_T, \\
			\zeta_\rho= (\zeta_\rho)_x = 0 &\text{in } \Sigma_T, \\
			\zeta_\rho(0,\cdot)=0 &\text{in } (0,1).
		\end{dcases}
	\end{align*}
	Then, using the regularity result for KS equations given  by \Cref{Prop-appendix-regularity},  we have 
	\begin{multline}\label{regularity_estimate}
		\|\zeta_\rho\|^2_{L^2(0,T; H^4(0,1))}\leq C s^{5-2/m}\lambda^5\Big(\|e^{-s\widehat \vphi_m}(\xi^{*}_m)^{5/2-1/m} \theta_x\|^2_{L^2(Q_T)} \\
		 + \|e^{-s\widehat \vphi_m}(\xi^{*}_m)^{5/2-1/m} F_3\|^2_{L^2(Q_T)}  
	 +  s^{7} \lambda^{5}\|e^{-s\widehat \vphi_m}(\xi^{*}_m)^{7/2}\zeta\|^2_{L^2(Q_T)}\Big),
	\end{multline}
	for all $s\geq C(T^{m}+T^{2m})$. 
	
	Also, by some standard interpolation result and Young's inequality, one has 
	\begin{multline}\label{regularity-2} 
		s^5 \lambda^5 \Big(\| e^{-s\widehat \vphi_m} (\xi^*_m)^{5/2} \zeta_{xx}(t,0) \|^2_{L^2(0,T)} + \| e^{-s\widehat \vphi_m} (\xi^*_m)^{5/2} \zeta_{xx}(t,1)\|^2_{L^2(0,T)} \Big) \\
		\leq C s^7 \lambda^7 \|e^{-s\widehat \vphi_m} (\xi^*_m)^{7/2} \zeta\|^2_{L^2(0,T;H^1(0,1))} + C s^4 \lambda^4 \|e^{-s\widehat \vphi_m} (\xi^*_m)^{2} \zeta\|^2_{L^2(0,T;H^4(0,1))}.    
	\end{multline} 
	
	So, thanks to the estimates \eqref{subestimate}, \eqref{regularity_estimate} and \eqref{regularity-2}, we have   from \eqref{carleman-zeta_x-auxi}, 
	\begin{multline}\label{carleman-zeta_x-auxi-2}
		s^7 \lambda^8 \iint \Big(e^{-2s \vphi_m } \xi_m^7 |\zeta_x|^2 + 	 e^{-2s\widehat \vphi_m } (\xi^*_m)^7 |\zeta|^2\Big)   
		+ s^{5-\frac{2}{m}} \lambda^5 \|e^{-s\widehat \vphi_m} (\xi^*_m)^{\frac{5}{2}-\frac{1}{m}} \zeta\|^2_{L^2(0,T; H^4(0,1))} \\
		\leq  
		C 	 \bigg[ \iint \Big( s^{5-\frac{2}{m}} \lambda^5 e^{-2s\widehat \vphi_m} (\xi^*_m)^{5-\frac{2}{m}} |\theta_x|^2 +  e^{-2s\vphi_m} |\theta_{xx}|^2\Big)
		\\	+ s^7 \lambda^5 \iint e^{-2s\widehat \vphi_m}(\xi^*_m)^7 |\zeta|^2
		+ \iint \Big( s^{5-\frac{2}{m}} \lambda^5 \xi^{5-\frac{2}{m}}   + s^2 \lambda^2\xi^2 \Big) e^{-2s \vphi_m}|F_{3}|^2 \\
			+ s^7 \lambda^7 \iint  e^{-2s\widehat \vphi_m}(\xi^*_m)^7 |\zeta_x|^2 
		+ s^4\lambda^4\|e^{-s\widehat \vphi} (\xi^*_m)^2 \zeta\|^2_{L^2(0,T;H^4(0,1))} + 
		s^7\lambda^8 \iint_{\omega_0} e^{-2s\vphi_m} \xi^7_m |\zeta_x|^2 \bigg]. 
	\end{multline}
	It is clear that the terms associated to $\zeta$ and $\zeta_x$ in $Q_T$ can be easily absorbed in terms of the l.h.s. of \eqref{carleman-zeta_x-auxi-2}. Then, using the facts that $s^{-2/m}\leq C/T^4$ (since $s\geq C T^{2m}$), $\xi^{-2/m}_m\leq CT^4$,  we can deduce the required Carleman inequality \eqref{eq: Carleman-zeta_x}.   
\end{proof}

Next, for the variable $\theta$, we write the following Carleman inequality (usual for the heat equation).
\begin{lemma}[Carleman inequality for $\theta$, the case $\alpha=0$] \label{lemma-carleman-theta-app-2}
	Let $\vphi_m$ and $\xi_m$ be defined as in \eqref{weight_function} with $m\geq 1$. Then, there exist positive constants $\bar \lambda_4$, $\bar s_4:=\bar \sigma_4 (T^{2m} +T^{2m-1})$ with some $\bar \sigma_4>0$ and $C$, such that we have the following estimate for $\theta\in L^2(0,T; H^2(0,1)\cap H^1_0(0,1)) \cap H^1(0,T; L^2(0,1))$,
	\begin{align}\label{carleman_theta-app-2}
		I_H(\theta;9)
		\leq C \left(s^6\lambda^6 \iint e^{-2s\vphi_m} \xi^6_m (|\zeta_x|^2+ |F_4|^2) + s^9 \lambda^{10} \iint_{\omega_0}  e^{-2s\vphi_m} \xi^9_m |\theta|^2   \right)
	\end{align}
	for all $\lambda\geq \bar \lambda_4$ and $s\geq \bar s_4$, where $I_{H}(\cdot, \cdot)$ is defined by \eqref{Notation_heat}.     
\end{lemma}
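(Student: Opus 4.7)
My plan is to reduce \eqref{carleman_theta-app-2} to the classical Fursikov-Imanuvilov Carleman estimate for the one-dimensional heat equation (the same tool used in \Cref{lemma-carleman-w-app-2}) by applying it not to $\theta$ itself but to the rescaled function
\begin{equation*}
\widetilde{\theta} := (s\xi_m)^3 \lambda^3\,\theta.
\end{equation*}
Since $(s\xi_m)^3\lambda^3$ is smooth, strictly positive in $Q_T$, and blows up at $t=0,T$, the function $\widetilde\theta$ still vanishes on $\Sigma_T$, has the correct temporal behavior to absorb Carleman weights, and belongs to the appropriate regularity class thanks to \eqref{well-posed-2}--\eqref{regu-2}.

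First I would compute the equation satisfied by $\widetilde\theta$: differentiating and using the equation $\theta_t - \theta_{xx} + \beta\theta_x = \zeta_x$ from \eqref{adj_sys_a_0}, one finds
\begin{equation*}
\widetilde\theta_t - \widetilde\theta_{xx} + \beta\widetilde\theta_x = (s\xi_m)^3 \lambda^3\,\zeta_x + \mathcal{R}(\theta,\theta_x) \quad \text{in } Q_T,
\end{equation*}
where $\mathcal{R}$ collects all the terms in which $t$- or $x$-derivatives fall on the weight $(s\xi_m)^3\lambda^3$. Using \eqref{weight_estimates} and \eqref{weight_estimate-t-m} together with $\xi_m^{1/m} \leq CT^{2m-2}\xi_m$ (valid since $m\geq 1$), each such remainder can be bounded pointwise by a constant times $\big(s^4\lambda^4 \xi_m^4 |\theta_x| + s^4\lambda^4 T^{2m-2}\xi_m^5 |\theta|\big)$.

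Next, I would invoke the standard Carleman inequality $I_H(\widetilde\theta;3) \leq C\bigl(\iint e^{-2s\vphi_m}|\widetilde\theta_t - \widetilde\theta_{xx} + \beta\widetilde\theta_x|^2 + s^3\lambda^4\iint_{\omega_0} e^{-2s\vphi_m}\xi_m^3|\widetilde\theta|^2\bigr)$. A direct matching of powers shows that the leading-order part of $I_H(\widetilde\theta;3)$ reproduces exactly $I_H(\theta;9)$: for instance the term $(s\xi_m)^3\lambda^4|\widetilde\theta|^2$ becomes $(s\xi_m)^9\lambda^{10}|\theta|^2$, and similarly for the other two summands. On the right-hand side, the principal source contribution gives $s^6\lambda^6 \iint e^{-2s\vphi_m}\xi_m^6|\zeta_x|^2$, while the local integral becomes exactly $s^9\lambda^{10}\iint_{\omega_0}e^{-2s\vphi_m}\xi_m^9|\theta|^2$, thus producing the right-hand side of \eqref{carleman_theta-app-2}.

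The main technical obstacle is to absorb into $I_H(\theta;9)$ both (i) the contribution of $\mathcal{R}$ inside $\iint e^{-2s\vphi_m}|\widetilde\theta_t - \widetilde\theta_{xx} + \beta\widetilde\theta_x|^2$ and (ii) the lower-order pieces produced when derivatives land on the weight inside $|\widetilde\theta_t|^2$, $|\widetilde\theta_x|^2$, $|\widetilde\theta_{xx}|^2$. Each of these error terms carries at least two fewer powers of $s\xi_m$ than the matching leading summand in $I_H(\theta;9)$, so they can be absorbed provided $\lambda \geq \bar\lambda_4$ and $s \geq \bar\sigma_4(T^{2m}+T^{2m-1})$ are taken large enough; this is precisely the role of the threshold stated in the lemma.
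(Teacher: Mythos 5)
Your overall strategy (shifting the powers in the Fursikov--Imanuvilov estimate by rescaling the unknown) is the standard route, and it is broadly consistent with how the paper treats this lemma: the paper in fact gives no proof at all, simply quoting the classical heat-equation Carleman estimate from \cite{Fur-Ima} ``possibly with different powers of Carleman parameters''. However, as written, your argument has a genuine gap at its central step. The function $\widetilde\theta=(s\xi_m)^3\lambda^3\theta$ does \emph{not} belong to the class $L^2(0,T;H^2(0,1)\cap H^1_0(0,1))\cap H^1(0,T;L^2(0,1))$ in which the classical estimate (and the lemma itself) is formulated: $\xi_m^3$ blows up like $t^{-3m}(T-t)^{-3m}$ at the endpoints, while $\|\theta(t)\|_{L^2}\to\|\theta_0\|_{L^2}\neq 0$ as $t\to 0^+$, so for $m\geq 1$ one has $\iint_{Q_T}|\widetilde\theta|^2=+\infty$; likewise the source $(s\xi_m)^3\lambda^3\zeta_x+\mathcal{R}$ of the equation for $\widetilde\theta$ is not in $L^2(Q_T)$. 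Hence \eqref{well-posed-2}--\eqref{regu-2} do not give what you claim, and the classical inequality cannot be invoked as a black box. This is precisely why the paper, when it performs an analogous rescaling in the proof of \Cref{Lemma-Careleman-zeta_x}, multiplies by $\rho(t)=s^{5/2-1/m}\lambda^{5/2}e^{-s\widehat\vphi_m}(\xi^*_m)^{5/2-1/m}$: the exponential factor makes $\rho\zeta$ genuinely regular. To repair your proof you must either rerun the Fursikov--Imanuvilov computation on the conjugated variable $\psi=e^{-s\vphi_m}(s\xi_m)^3\lambda^3\theta$, which does vanish as $t\to 0^+,T^-$ because the exponential dominates the polynomial blow-up (so all boundary terms in the time integrations by parts disappear), or work with weights truncated near $t=0,T$ and pass to the limit.

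Second, your pointwise bound on the commutator $\mathcal{R}$ is wrong in a way that breaks your own absorption argument. Derivatives falling on $(s\xi_m)^3\lambda^3$ cost only a factor $\lambda$ per $x$-derivative, since $(\xi_m)_x=\lambda\nu'\xi_m$; the rule ``each derivative costs $s\lambda\xi_m$'' (i.e.\ \eqref{weight_estimates}) applies only when the factor $e^{-2s\vphi_m}$ is present, which is not the case here. The correct bound is $|\mathcal{R}|\leq C\big(s^3\lambda^4\xi_m^3|\theta_x|+s^3\lambda^5\xi_m^3|\theta|+s^3\lambda^3T^{2m-1}\xi_m^4|\theta|\big)$. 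With your stated bound $s^4\lambda^4\xi_m^4|\theta_x|$, squaring produces $s^8\lambda^8\xi_m^8|\theta_x|^2$, which exceeds the corresponding term $(s\xi_m)^7\lambda^8|\theta_x|^2$ of $I_H(\theta;9)$ by a factor $s\xi_m$ --- one power \emph{more}, not ``at least two fewer'' --- so it could never be absorbed. With the corrected bound the scheme does close: the $\theta_x$-error sits one power of $s\xi_m$ below the leading term, and the time-derivative term is absorbed using $\xi_m\geq cT^{-2m}$ together with $s\geq\bar\sigma_4(T^{2m}+T^{2m-1})$.
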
	

\smallskip 

\begin{proof}[\bf Proof of \Cref{Theorem-Carleman-second-app}]
	To obtain the Carleman estimate \eqref{Carleman-second-app}, let us first add all four Carleman estimates \eqref{carleman_u-app-2}, \eqref{carleman_w-app-2}, \eqref{eq: Carleman-zeta_x} and \eqref{carleman_theta-app-2}, which yields 
	\begin{multline}\label{Add-four-carlemans}
		I_{KS}(u) + I_{H}(w;7) 
		+ 	s^7 \lambda^8 \iint \left(e^{-2s\vphi_m } \xi_m^7 |\zeta_x|^2 + e^{-2s\widehat \vphi_m} (\xi_m^*)^7 |\zeta|^2\right) \\
		+
		s^{5-\frac{2}{m}} \lambda^5 \|e^{-s\widehat \vphi_m} (\xi^*_m)^{\frac{5}{2}-\frac{1}{m}} \zeta\|_{L^2(0,T; H^4(0,1))}
		+    I_{H}(\theta;9) 
		\\
		\leq 
		C \bigg[   \iint e^{-2s\vphi_m} \left(|w_x|^2+ s^4\lambda^4\xi^4_m|u_x|^2 +s^4\lambda^4 \xi^4_m|\theta|^2 + s^5\lambda^5\xi^5_m|\theta_x|^2  + s^6\lambda^6 \xi^6_m |\zeta_x|^2 \right)  \\
		+  \iint e^{-2s\vphi_m} \left( |F_1|^2 + s^4\lambda^4 \xi^4_m|F_2|^2 + s^5\lambda^5 \xi^5_m |F_3|^2 +s^6\lambda^6 \xi^6_m |F_4|^2 \right) 
		\\
		+ T^{10m} \iint e^{-2s\vphi_m} \xi^5_m |\theta_{xx}|^2
		+ s^7 \lambda^8 \iint_{\omega_0}  e^{-2s\vphi_m}\xi_m^7 |u|^2
		+ s^7 \lambda^8 \iint_{\omega_0}  e^{-2s\vphi_m}\xi_m^7 |w|^2
		\\
		+  s^7 \lambda^8 \iint_{\omega_0}  e^{-2s\vphi_m}\xi_m^7 |\zeta_x|^2
		+ s^9 \lambda^{10} \iint_{\omega_0}  e^{-2s\vphi_m}\xi_m^9 |\theta|^2
		\bigg] ,
	\end{multline}
	for all $\lambda \geq \lambda^*:= \max\{\bar \lambda_j: 1\leq j\leq 4\}$ and $s\geq  \bar c_1(T^m+T^{2m} + T^{2m-1} +T^{2m-2/5})$ for some $\bar c_1\geq \max\{\bar \sigma_j: 1\leq j\leq 4\}$. 
	
	\smallskip 
	{\bf Step 1: Absorbing the lower order integrals.}
	Using the fact that $1\leq CT^{2m}\xi_m$, we can deduce that 
	\begin{multline}\label{sourec_integrals}
		 \iint e^{-2s\vphi_m} \left(|w_x|^2+ s^4\lambda^4\xi^4_m|u_x|^2 +s^4\lambda^4 \xi^4_m|\theta|^2 + s^5\lambda^5\xi^5_m|\theta_x|^2  + s^6\lambda^6 \xi^6_m |\zeta_x|^2 \right)\\
		+ T^{10m} \iint e^{-2s\vphi_m} \xi^5_m |\theta_{xx}|^2 
		\\
		 \leq 
		C  \iint e^{-2s\vphi_m} \big(T^{10m} \xi^5_m |w_x|^2 + s^4\lambda^4 T^{2m}\xi^5_m|u_x|^2 + s^4\lambda^4 T^{18m}\xi^9_m|\theta|^2 \big) \\ 
		+ 	C  \iint e^{-2s\vphi_m}  \big(s^5\lambda^5 T^{4m} \xi^7_m|\theta_x|^2 + s^6\lambda^6 T^{2m} \xi^7_m|\zeta_x|^2 + T^{10m} \xi^5_m |\theta_{xx}|^2 \big),
	\end{multline}
	and thus by 
	choosing $\lambda\geq \lambda^*$ and $s\geq \bar c_2 T^{2m}$ for some $\bar c_2>0$, one can absorb all the source integrals appearing in the r.h.s. of \eqref{sourec_integrals} by the associated higher order integrals in the l.h.s. of \eqref{Add-four-carlemans}. 
	
	\medskip 
	
	{\bf Step 2: Absorbing the observation integral associated to $\zeta_x$.}
	Using the equation $\zeta_x= \theta_t -\theta_{xx}+\beta \theta_x-F_4$, and then following the techniques developed in \cite[Step 4--Proposition 3.6]{Cerpa_Careno}, we can eliminate the observation integral of $\zeta_x$. The resultant estimate can be then written as 
	\begin{multline}\label{Carleman_without_zeta_x}
		I_{KS}(u) + I_{H}(w;7) 
		+ 	s^7 \lambda^8 \iint \left(e^{-2s\vphi_m } \xi_m^7 |\zeta_x|^2 + e^{-2s\widehat \vphi_m} (\xi_m^*)^7 |\zeta|^2\right) 
		+    I_{H}(\theta;9) 
		\\
		\leq C \bigg[ s^7 \lambda^8 \iint_{\omega_0}  e^{-2s\vphi_m}\xi_m^7 |u|^2
		+ s^7 \lambda^8 \iint_{\omega_0}  e^{-2s\vphi_m}\xi_m^7 |w|^2 + s^{23}\lambda^{16}\iint_{\omega_0} e^{-6s \vphi_m + 4s\widehat \vphi_m} \xi^{23}_m  |\theta|^2  \\
		+ \iint e^{-2s\vphi_m} \left(|F_1|^2 + s^4\lambda^4 \xi^4_m |F_2|^2 + s^5\lambda^5  \xi^5_m|F_3|^2 + s^7\lambda^8 \xi^7_m |F_4|^2\right) \bigg],
	\end{multline}
	for any $\lambda \geq \lambda^*$ and  $s\geq \bar c_3(T^m+T^{2m} + T^{2m-1} +T^{2m-2/5})$, for some $\bar c_3>0$.

	\medskip
	
	{\bf Step 3: Absorbing the observation integral associated to $\theta$.} 
	We choose a smooth function $\phi\in \C^\infty_c(\omega_0)$ such that $0\leq \phi\leq 1$ in $\omega_0$ and $\phi=1$ in $\widehat \omega \subset \subset \omega_0$ for some  $\widehat \omega \neq \emptyset$ open set and without loss of generality, let us consider the Carleman estimate \eqref{Carleman_without_zeta_x} with the observation domain $\widehat \omega$.

	Then, we   focus on the term 
	\begin{align}\label{obs_theta_1}
		J:= s^{23}\lambda^{16}	\iint_{\widehat \omega} e^{-6s\vphi_m + 4s\widehat \vphi_m} \xi^{23}_m  |\theta|^2 \leq s^{23}\lambda^{16}	\iint_{\omega_0} \phi e^{-6s\vphi_m + 4s\widehat \vphi_m} \xi^{23}_m |\theta|^2.
	\end{align}
	From the second equation in \eqref{adj_sys_a_0}, we have 
	\begin{align*}
		\theta = -w_t - w_{xx} -\beta w_x + u_x - F_2 \quad \text{in } \oo \ \text{(consequently in $\omega_0$)}.
	\end{align*}
	Using this, we see
	\begin{align*}
		&s^{23}\lambda^{16}	\iint_{\omega_0} \phi e^{-6s\vphi_m + 4s\widehat \vphi_m} \xi^{23}_m  |\theta|^2 \\
		= & s^{23}\lambda^{16}	\iint_{\omega_0} \phi  e^{-6s\vphi_m + 4s\widehat \vphi_m} \xi^{23}_m  \theta ( u_x -w_t -w_{xx} -\beta w_x -F_2 )\\
		= & : \sum_{1\leq i\leq 5} J_{i}.
	\end{align*}
	
\smallskip
\noindent 
--  {\em Estimate for $J_1$.} We first look into the term $J_{1}$. 
	\begin{align*}
		J_{1}&=  s^{23}\lambda^{16}	\iint_{\omega_0} \phi e^{-6s\vphi_m + 4s \widehat \vphi_m} \xi^{23}_m   \theta u_x \\
		&= -s^{23}\lambda^{16}	\iint_{\omega_0} (\phi e^{-6s\vphi_m + 4s\widehat \vphi_m} \xi^{23}_m)_{x}  \theta u -   s^{23}\lambda^{16}	\iint_{\omega_0} \phi e^{-6s\vphi_m + 4s\widehat \vphi_m}\xi^{23}_m  \theta_x u.
	\end{align*}
Now, using \eqref{weight_estimates}, we observe  that for any $n\in \mathbb N^*$ and $b>0$,  the $n$-th derivative with respect to $x$ can be estimated as follows: 
\begin{align}\label{deri-esti}
	\left|\big( \phi e^{-2s\vphi_m} \xi^b_{m}   \big)_{n,x}    \right| \leq C s^n \lambda^n \, e^{-2s\vphi_m} \xi^{b+n}_m.     
\end{align}
In particular, 
	\begin{align}\label{esti-deri-app-2}
		|(\phi e^{-6s\vphi_m} \xi^{23}_m)_{n,x}| \leq C s^n\lambda^n e^{-6s\vphi_m} \xi^{23+n}_m, \quad \text{for } n \in \mathbb N^*.
	\end{align}
	Then, applying the Young's inequality with any $\epsilon>0$, we obtain   
	\begin{multline}\label{esti_J_1}
		|J_{1}| \leq C s^{24}\lambda^{17} \iint_{\omega_0}  \phi e^{-6s\vphi_m + 4s\widehat \vphi_m} \xi^{24}_m  |\theta u|  +  s^{23}\lambda^{16}  \iint_{\omega_0}  \phi e^{-6s\vphi_m + 4s\widehat \vphi_m} \xi^{23}_m  |\theta_x u|\\ 
		\leq \epsilon s^9 \lambda^{10}  \iint e^{-2s\vphi_m} \xi^9_m |\theta|^2  + \epsilon s^7 \lambda^{8}  \iint e^{-2s\vphi_m} \xi^7_m |\theta_x|^2 +  \frac{C}{\epsilon} s^{39}\lambda^{24} \iint_{\omega_0} e^{-10s\vphi_m + 8s\widehat \vphi_m} \xi^{39}_m|u|^2.
	\end{multline}

	%
	
\smallskip
\noindent 
{--  \em Estimate for $J_2$.} Next, we look into the term $J_{2}$, 
	\begin{align*}
		J_{2} & = - s^{23}\lambda^{16}	\iint_{\omega_0} \phi e^{-6s\vphi_m + 4s \widehat \vphi_m} \xi^{23}_m   \theta w_t \\
		& = s^{23}\lambda^{16} \iint_{\omega_0} \phi e^{-6s\vphi_m+4s\widehat \vphi_m} \xi^{23}_m \theta_t w +    s^{23}\lambda^{16}\iint_{\omega_0} \phi (e^{-6s\vphi_m+4s\widehat \vphi_m} \xi^{23}_m)_t \theta w,
	\end{align*}
	since $e^{-6s\vphi_m+ 4s\widehat \vphi_m}$ is $0$ at $t=0$ and $t=T$, due to the result in \Cref{Lemma-auxiliary}. 
	
	Now, recall the estimate \eqref{esti_time_deri}, so that one has 
	\begin{align*}
		\left|(e^{-6s\vphi_m+4s\widehat \vphi_m} \xi^{23}_m)_t  \right| \leq CT^{2m-2} s e^{-6s\vphi_m+4s\widehat \vphi_m} \xi^{25}_m.
	\end{align*}
	Thanks to this and for any $\epsilon>0$, we have (again, by using  the Young's inequality) that
	\begin{align}\label{esti_J_2}
		|J_{2}| \leq \epsilon s^5\lambda^6 \iint e^{-2s\vphi_m} \xi^5_m |\theta_t|^2 + \epsilon s^9\lambda^{10} \iint e^{-2s\vphi_m} \xi^9_m |\theta|^2 + \frac{C}{\epsilon} s^{41}\lambda^{26} \iint e^{-10s\vphi_m+ 8s\widehat \vphi_m} \xi^{41}_m|w|^2.
	\end{align}

\smallskip
\noindent 
{--  \em Estimate for $J_3$.} Let us now focus on $J_{3}$; upon consecutive integration by parts with respect to $x$, one can deduce that
	\begin{multline} \label{esti_J_3-aux}
		|J_{3}| \leq s^{23}\lambda^{16} \iint_{\omega_0} \left| \big(\phi e^{-6s\vphi_m + 4s\widehat \vphi_m} \xi^{23}_m\big)_{xx} \theta w \right| + 2 s^{23}\lambda^{16} \iint_{\omega_0} \left| \big(\phi e^{-6s\vphi_m + 4s\widehat \vphi_m} \xi^{23}_m\big)_{x} \theta_x w \right| \\
		+ s^{23}\lambda^{16} \iint_{\omega_0} \left|\phi e^{-6s\vphi_m + 4s\widehat \vphi_m} \xi^{23}_m \theta_{xx} w \right| \\
		\leq 
		C s^{25}\lambda^{18} \iint_{\omega_0} e^{-6s\vphi_m + 4s\widehat \vphi_m} \xi^{25}_m |\theta w| + C s^{24}\lambda^{17} \iint_{\omega_0} e^{-6s\vphi_m + 4s\widehat \vphi_m} \xi^{24}_m  |\theta_x w|  \\
		+C s^{23}\lambda^{16} \iint_{\omega_0}  e^{-6s\vphi_m + 4s\widehat \vphi_m} \xi^{23}_m |\theta_{xx} w|,
	\end{multline}
	thanks to the result \eqref{esti-deri-app-2}.
	
	Now, for any $\epsilon>0$, applying the Young's inequality, we have from \eqref{esti_J_3-aux},  
	\begin{multline} \label{esti_J_3}
		|J_3|
		\leq \epsilon s^9 \lambda^{10}  \iint e^{-2s\vphi_m} \xi^9_m |\theta|^2  + \epsilon s^7 \lambda^{8}  \iint e^{-2s\vphi_m} \xi^7_m |\theta_x|^2 +\epsilon s^5 \lambda^{6}  \iint e^{-2s\vphi_m} \xi^5_m |\theta_{xx}|^2 \\
		+   \frac{C}{\epsilon} s^{41}\lambda^{26} \iint_{\omega_0} \phi e^{-10s\vphi_m + 8s\widehat \vphi_m }\xi^{41}_m|w|^2. 
	\end{multline}

\smallskip
\noindent 
{--  \em Estimate for $J_4$.} Analogous to the estimate of $J_1$ in \eqref{esti_J_1},  the term $J_{4}$ satisfies 
\begin{multline}\label{esti_J_4}
	|J_{4}| 
	\leq \epsilon s^9 \lambda^{10}  \iint e^{-2s\vphi_m} \xi^9_m |\theta|^2  + \epsilon s^7 \lambda^{8}  \iint e^{-2s\vphi_m} \xi^7_m |\theta_x|^2  \\
	+  \frac{C}{\epsilon} s^{39}\lambda^{24} \iint_{\omega_0} e^{-10s\vphi_m + 8s\widehat \vphi_m} \xi^{39}_m|w|^2.
\end{multline}

\smallskip
\noindent 
{--  \em Estimate for $J_5$.} It is esay to observe that 
\begin{equation}\label{esti_J_5}
	\begin{aligned}
		|J_5| \leq \epsilon s^9\lambda^{10} \iint e^{-2s\vphi_m} \xi_m^9 |\theta|^2 + \frac{C}{\epsilon} s^{37} \lambda^{22} \iint e^{-10 s\vphi_m + 8 s\widehat \vphi_m} \xi^{37}_m |F_2|^2 ,
	\end{aligned}
\end{equation}
for any given $\epsilon>0$. 
	
	\smallskip

	Finally, gathering  the estimates of $J_1$, $J_2$, $J_3$, $J_4$ and $J_5$  given by \eqref{esti_J_1}, \eqref{esti_J_2}, \eqref{esti_J_3}, \eqref{esti_J_4} and \eqref{esti_J_5} respectively, we have 
	\begin{multline}\label{esti_theta_observation}
		s^{23}\lambda^{16}	\iint_{\omega_0} \phi e^{-6s\vphi_m + 4s\widehat \vphi_m} \xi^{23}_m |\theta|^2 
		\leq C \epsilon  s^{9}\lambda^{10} \iint e^{-2s\vphi_m} \xi^9_m |\theta|^2 \\ 
		+ C \epsilon s^{7}\lambda^{8} \iint e^{-2s\vphi_m} \xi^7_m |\theta_x|^2
		+ C \epsilon s^{5}\lambda^{6} \iint e^{-2s\vphi_m} \xi^5_m \big(|\theta_t|^2+|\theta_{xx}|^2\big)\\
		 + 
		\frac{C}{\epsilon} \iint_{\omega_0} e^{-10s\vphi_m + 8s\widehat \vphi_m} \Big(s^{39}\lambda^{24} \xi^{39}_m |u|^2 + s^{41}\lambda^{26}\xi^{41}_m |w|^2  \Big) +  \frac{C}{\epsilon} s^{37}\lambda^{22}\iint  e^{-10s\vphi_m + 8s\widehat \vphi_m} \xi^{37}_m |F_2|^2  .
	\end{multline}
	Fix $\epsilon>0$ small enough so that,  the integrals in $Q_T$ can be absorbed in terms of the l.h.s. in \eqref{Carleman_without_zeta_x}, and this yields the required Carleman inequality \eqref{Carleman-second-app} for every $\lambda\geq \lambda^*$ and $s\geq \sigma^* (T^m + T^{2m-1} +T^{2m-2/5} + T^{2m}),$ for some $\lambda^*>0$ and  $\sigma^*>0$ chosen largely enough. 
	
	Hence, the proof of \Cref{Theorem-Carleman-second-app} is finished.
\end{proof}

\medskip

\subsection{Carleman estimate for the case when $\boldsymbol{\alpha=1}$}\label{carleman-alpha=1}

The adjoint system \eqref{adj_sys} for the case $\alpha=1$ reads as 
\begin{align}\label{adj_sys_a_1}
	&\begin{dcases}
		-u_t + u_{xxxx} +\gamma u_{xx} = F_1 -w_x +   \zeta \mathds{1}_{\mathcal O}  & \textnormal{in } Q_T, \\
		-w_t - w_{xx} - \beta w_x =  F_2 -u_x  & \textnormal{in } Q_T,\\
		\zeta_t + \zeta_{xxxx} +\gamma \zeta_{xx} = F_3+\theta_x  & \textnormal{in } Q_T, \\
		\theta_t - \theta_{xx} + \beta \theta_x = F_4+ \zeta_x & \textnormal{in } Q_T,\\
		u=u_x=w=\zeta=\zeta_x=\theta=0        &\textnormal{in } \Sigma_T, \\
		u(T)=0, \ \ w(T) =0, &\textnormal{in } (0,1),
		\\ \zeta(0)=\zeta_0, \ \ \theta(0) =\theta_0  &\textnormal{in } (0,1). 
 	\end{dcases}
\end{align}

We now write the Carleman inequality associated to the system \eqref{adj_sys_a_1}. 
In this case we also consider $(\zeta_0,\theta_0)\in [L^2(0,1)]^2$ and $F_1, F_2, F_3\in L^2(Q_T)$ but  we need to consider $F_4$ to be slightly mpre regular, namely $F\in L^2(Q_T)$ such that $\iint e^{-2s\vphi_m} (|F_4|^2+|F_{4,x}|^2) < +\infty$. 
\begin{theorem}[Carleman inequality: the case $\alpha=1$]  \label{theorem-carleman}
	Let $\vphi_m$ and $\xi_m$ be given by \eqref{weight_function} with $m>3$. Then, there exist positive constants  $\lambda_0$, $s_0:=\sigma_0(T^m+T^{2m}+T^{2m-1}+ T^{2m-2/5}+T^{4m/3}+T^{3m/2})$ with some $\sigma_0>0$ and $C$, such that we have the following inequality satisfied by the solution to \eqref{adj_sys_a_1}: 
	\begin{multline}\label{Carleman-first-approach}
		I_{KS}(u) + I_H(w;3) + I_{KS}(\zeta) 
		+	s^3\lambda^4\iint e^{-2s\vphi_m} \xi^3_m |\theta_x|^2 + s\lambda^2 \iint e^{-2s\vphi_m} \xi_m |\theta_{xx}|^2 
		\\ 
		\leq
		 C \iint e^{-2s\vphi_m} \left(s^{71}\lambda^{72}\xi^{71}_m|F_1|^2 + |F_2|^2   + s^3\lambda^4 \xi^3_m|F_3|^2\right) \\
		 +Cs\lambda^2\iint e^{-2s\vphi_m}\xi^2_m\left( |F_4|^2 + |F_{4,x}|^2 \right) 
	+ C s^{79}\lambda^{80}\iint_{\omega_0}  e^{-2s\vphi_m} \xi^{79}_m |u|^2 +    Cs^{73}\lambda^{74}\iint_{\omega_0}  e^{-2s\vphi_m} \xi^{73}_m |w|^2  ,
	\end{multline}
	for all $\lambda\geq \lambda_0$ and $s\geq s_0$, where $I_{KS}(\cdot)$ and $I_{H}(\cdot; \cdot)$ are introduced in \eqref{Notation_KS} and \eqref{Notation_heat} respectively. 
\end{theorem}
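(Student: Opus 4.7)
My plan is to mimic the strategy of Theorems \ref{Thm-Carleman-first-app} and \ref{Theorem-Carleman-second-app} but with the roles of $\zeta$ and $\theta$ interchanged, since with $\alpha=1$ it is now the $w$-equation that has lost its coupling to the observation. I would begin by establishing four partial Carleman estimates: a KS-type inequality for $u$ (with source $-w_x + \zeta\mathds{1}_\oo$), the classical parabolic inequality for $w$ (source $-u_x$), a KS-type inequality for $\zeta$ (source $\theta_x$, with localized observation of $\zeta$), and a Carleman estimate for $\theta_x$. This last one is obtained by differentiating in $x$ the fourth equation of \eqref{adj_sys_a_1} to obtain $(\theta_x)_t - (\theta_x)_{xx} + \beta(\theta_x)_x = \zeta_{xx}$, and then applying the techniques of \cite{Cerpa_Careno} to handle the non-homogeneous boundary traces $\theta_x(\cdot,0)$, $\theta_x(\cdot,1)$. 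Summing the four estimates yields an inequality whose left-hand side already contains the targeted quantities, and whose right-hand side features global source integrals plus localized observations of $u$, $w$, $\zeta$ and $\theta_x$ on $\omega_0$.

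The next step is routine, echoing Step 1 of the proof of \Cref{Thm-Carleman-first-app}: by invoking $1\leq CT^{2m}\xi_m$ and requiring $s\geq c T^{2m+2}$, all the global source integrals get absorbed by the leading left-hand side terms. Then, as in the $\alpha\in(0,1)$ case, I eliminate the observation $s^7\lambda^8\iint_{\omega_0} e^{-2s\vphi_m}\xi^7_m|\zeta|^2$ via the first equation of \eqref{adj_sys_a_1}, which yields (on $\omega_0\subset\oo$) the pointwise identity
\[
\zeta \;=\; -u_t + u_{xxxx} + \gamma u_{xx} + w_x.
\]
Multiplying by $\phi\,e^{-2s\vphi_m}\xi^7_m\zeta$ with a cutoff $\phi$ as in \eqref{func-phi}, integrating by parts repeatedly in $x$ and once in $t$, and using \eqref{weight_estimates}, \eqref{weight_estimate-t-m}, Young's and Cauchy-Schwarz inequalities, I recast the $\zeta$-observation as high-weight observations of $u$ and $w$ on $\omega_0$, plus $\epsilon$-small remainders absorbable on the left-hand side.

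The main obstacle will be the disposal of the $\theta_x$-observation on $\omega_0$ produced in step (iv), because $\theta$ does not appear in either the $u$- or the $w$-equation and hence cannot be replaced by a combination of $u$, $w$ and their derivatives as above. My plan is to substitute $\theta_x$ in this observation by means of the third equation, $\theta_x = \zeta_t + \zeta_{xxxx} + \gamma\zeta_{xx}$, which holds throughout $Q_T$, and then to redistribute the high-order derivatives of $\zeta$ by successive integration by parts in $t$ and $x$ against the cutoff. Each integration by parts in $t$ costs a factor $T^{2m-2}s\xi_m^2$ via \eqref{weight_estimate-t-m} (which forces $m>1$ and, when iterated, accounts for the stronger constraint $m>3$), while each integration in $x$ costs a factor $s\lambda\xi_m$ via \eqref{weight_estimates}. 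After enough iterations, everything that survives is either absorbable into $I_{KS}(\zeta)$ on the left-hand side, or reduces, through one more use of the $u$-equation, to observations of $u$ and $w$ alone. Keeping track of all the weight multiplications explains the unusually high exponents $s^{79}\lambda^{80}\xi^{79}_m$ and $s^{73}\lambda^{74}\xi^{73}_m$ appearing in \eqref{Carleman-first-approach}. Choosing $\epsilon$ small enough, and $\lambda\geq\lambda_0$, $s\geq s_0$ large enough with the claimed $T$-scaling, all remainders vanish and the desired Carleman estimate follows.
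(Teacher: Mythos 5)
Your proposal reproduces the paper's architecture faithfully: the same four partial Carleman estimates (for $u$, $w$, $\zeta$, and for $\theta_x$ obtained by differentiating the heat equation of $\theta$ in $x$), the same Step-1 absorption of global source terms, and elimination of the localized observations through the coupling equations. The only structural difference is the order: the paper first removes the $\theta_x$-observation (which produces a local $\zeta$-term at weight $s^{39}\lambda^{40}\xi^{39}_m$, cf.\ \eqref{esti_theta_x-2}) and only then uses the $u$-equation once, whereas you use the $u$-equation twice (once at weight $s^{7}\lambda^{8}\xi^{7}_m$, and again after the $\theta_x$ step); this redundancy is harmless, and your reconstruction of the exponents $79$ and $73$ is correct in spirit.

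There is, however, a genuine gap at the decisive step, the disposal of the $\theta_x$-observation. After substituting $\theta_x=\zeta_t+\zeta_{xxxx}+\gamma\zeta_{xx}$, the term
\[
s^3\lambda^4\iint_{\omega_0}\phi\, e^{-2s\vphi_m}\xi^3_m\,\theta_x\,\zeta_t
\]
cannot be handled by the bookkeeping you describe. Direct Cauchy--Schwarz is circular: pairing $\zeta_t$ with the absorbable weight $s^{-1}\xi^{-1}_m$ forces a local observation of $\theta_x$ at weight $s^{7}\lambda^{8}\xi^{7}_m$, strictly worse than the $s^{3}\lambda^{4}\xi^{3}_m$ observation you are trying to remove. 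Integrating by parts in $t$, as you propose, sends the derivative either onto the weight (fine, by \eqref{weight_estimate-t-m}) or onto $\theta_x$, producing $(\theta_t)_x\,\zeta$; and $(\theta_t)_x$ (like $\theta_{xxx}$, which appears if you instead try to trade it through the equation) is controlled by nothing on the left-hand side of \eqref{Carleman-first-approach}, which in $\theta$ only contains $\theta_x$ and $\theta_{xx}$. Your cost accounting ($T^{2m-2}s\xi^2_m$ per time integration, $s\lambda\xi_m$ per space integration) tracks only derivatives landing on the weights and silently ignores these terms. The missing idea --- the crux of the paper's Step 2, see \eqref{estimate_theta_x}--\eqref{esti_I-1-2} --- is to split the substitution as $(\zeta_t+\zeta_{xx})+(\zeta_{xxxx}+(\gamma-1)\zeta_{xx})$ and invoke the \emph{fourth} equation of \eqref{adj_sys_a_1}: after integrations by parts, the dangerous contributions combine into $(\theta_t-\theta_{xx})_x\,\zeta=(-\beta\theta_x+\zeta_x)_x\,\zeta=(-\beta\theta_{xx}+\zeta_{xx})\,\zeta$, and $\theta_{xx}$, $\zeta_{xx}$ \emph{are} controlled. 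Since your plan never uses the $\theta$-equation in this step, it stalls exactly there. Two minor further points: the constraint $m>3$ comes from the $\theta_x$-Carleman estimate of \cite[Theorem 3.1]{Cerpa-Mercado-Pazoto} (\Cref{lemma-carleman-theta_x}), not from iterated time integrations by parts; and before the $u$-equation can be applied you also need the intermediate trade of the local $\zeta_x$, $\zeta_{xx}$, $\zeta_{xxx}$ observations for a single local $\zeta$-observation at weight $s^{39}\lambda^{40}\xi^{39}_m$, which is a separate integration-by-parts lemma rather than a consequence of the steps you list.
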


\smallskip 

To prove the above Carleman inequality with the observations only on $u$ and $w$, we do the following steps.
\begin{itemize}
	\item[(i)]  First, observe that the usual Carleman estimate for $\theta$ will always give an observation integral of $\theta$ and there is no chance to absorb it by any leading integrals. In fact, there is a coupling by $\theta_x$ to the equation of $\zeta$ and therefore, it is relevant to seek for a Carleman estimate associated with the variable $\theta_x$. 
	
	In this context, we recall the work \cite{Cerpa-Mercado-Pazoto}, where they proved such a Carleman estimate to demonstrate a joint Carleman inequality for the adjoint to the KS system coupled with a heat equation and we shall use it in our present article. 
	
	\item[(ii)] For the variables $u$ and $\zeta$,  we use a Carleman estimate from the work \cite{Zhou} or \cite{Cerpa-Mercado-Pazoto} as mentioned earlier, see \Cref{thm-Zhou} in the present paper.
	
	\item[(iii)] Finally,	for $w$, we make use the standard Carleman inequality for the heat equation, see \Cref{thm-Fur-Ima} (thanks to the pioneering work \cite{Fur-Ima}).
	
	
\end{itemize}

\medskip 

Below, we prescribe the individual  Carleman estimates for each of $u$, $w$, $\zeta$ and $\theta_x$.

\begin{lemma}[Carleman inequality for $u$, the case $\alpha=1$]\label{lemma-carleman-u}
	Let $\vphi_m$ and $\xi_m$ be as given by  \eqref{weight_function} with $m\geq 2/5$. Then, there exist positive constants $\lambda_1$, $s_1:=\sigma_1 (T^{2m}+T^{2m-2/5})$ with some $\sigma_1>0$ and $C$, such that we have the following estimate for $u\in L^2(0,T;H^4(0,1)\cap H^2_0(0,1)) \cap H^1(0,T; L^2(0,1))$,
	\begin{align} \label{carleman_u}
		I_{KS}(u)
		\leq C \left(\iint e^{-2s\vphi_m} \Big(|F_1|^2+|w_x|^2 + |\zeta|^2\Big) +s^7\lambda^8\iint_{\omega_0}  e^{-2s\vphi_m} \xi^7_m |u|^2      \right)
	\end{align}
	for all $\lambda\geq \lambda_1$, $s\geq s_1$, where $I_{KS}(\cdot)$ is defined by \eqref{Notation_KS}.    
\end{lemma}

The next lemma is concerned with a Carleman estimate for $w$. 
\begin{lemma}[Carleman inequality for $w$, the case $\alpha=1$]\label{lemma-carleman-w}
	Let $\vphi_m$ and $\xi_m$ be defined as in \eqref{weight_function} with $m\geq 1$. Then, there exist positive constants $\lambda_2$, $s_2:=\sigma_2 (T^{2m} +T^{2m-1})$ with some $\sigma_2>0$ and $C$, such that we have the following estimate for $w\in L^2(0,T; H^2(0,1)\cap H^1_0(0,1)) \cap H^1(0,T; L^2(0,1))$,
	\begin{align}\label{carleman_w}
		I_H(w;3)
		\leq C \left(\iint e^{-2s\vphi_m}(|F_2|^2+|u_x|^2) + s^3\lambda^4\iint_{\omega_0}  e^{-2s\vphi_m} \xi^3_m |w|^2      \right)
	\end{align}
	for all $\lambda\geq \lambda_2$ and $s\geq s_2$, where $I_{H}(\cdot ; \cdot)$ is defined by \eqref{Notation_heat}.     
\end{lemma}	

We also write a Carleman estimate for $\zeta$ which is similar with the one for $u$.
\begin{lemma}[Carleman inequality for $\zeta$, the case $\alpha=1$]\label{lemma-carleman-zeta}
	Let $\vphi_m$ and $\xi_m$ be defined as in \eqref{weight_function} with $m\geq 2/5$. Then, there exist positive constants $\lambda_3$, $s_3:=\sigma_3 (T^{2m}+T^{2m-2/5})$ with some $\sigma_3>0$ and $C$, such that we have the following estimate for $\zeta\in L^2(0,T;H^4(0,1)\cap H^2_0(0,1)) \cap H^1(0,T; L^2(0,1))$,
	\begin{align}\label{estimate_zeta}
		I_{KS}(\zeta)
		\leq C \left(\iint e^{-2s\vphi_m}(|F_3|^2+ |\theta_x|^2 )+ s^7\lambda^8\iint_{\omega_0}  e^{-2s\vphi_m} \xi^7_m |\zeta|^2      \right)
	\end{align}
	for all $\lambda\geq \lambda_3$ and $s\geq s_3$, where $I_{KS}(\cdot)$ has been defined in \eqref{Notation_KS}. 
\end{lemma}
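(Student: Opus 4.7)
The plan is to observe that the equation satisfied by $\zeta$ in \eqref{adj_sys_a_1} is, up to the direction of time and the form of the source term, the very same fourth--order stabilized Kuramoto--Sivashinsky equation that governs $u$ in the case $\alpha\in(0,1)$ and whose Carleman estimate is recorded in \Cref{lemma-carleman-u-app-1}. Consequently, the proof will be essentially an immediate specialization of that same tool from \cite{Zhou}, and I do not expect any new technical ingredient to appear.

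More precisely, I would isolate the third equation of \eqref{adj_sys_a_1},
\begin{equation*}
\zeta_t + \zeta_{xxxx} + \gamma\, \zeta_{xx} = \theta_x \quad \text{in } Q_T, \qquad \zeta = \zeta_x = 0 \text{ in } \Sigma_T,
\end{equation*}
and treat $\theta_x \in L^2(Q_T)$ as a given right--hand side. The weights $\vphi_m$, $\xi_m$ defined in \eqref{weight_function} are symmetric under $t \mapsto T-t$ (they blow up at both endpoints), hence the fact that the $\zeta$--equation evolves forward in time while the $u$--equation in \Cref{lemma-carleman-u-app-1} evolves backward is irrelevant for the derivation of the global Carleman inequality.

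Then I would invoke the Carleman estimate from \cite{Zhou} for the operator $\partial_t + \partial_x^4 + \gamma\,\partial_x^2$, with the weight function $\nu$ of \eqref{definition_nu} and observation set $\omega_0$, applied to $\zeta$ with source $\theta_x$. This yields
\begin{equation*}
I_{KS}(\zeta) \leq C\left( \iint e^{-2s\vphi_m} |\theta_x|^2 + s^7 \lambda^8 \iint_{\omega_0} e^{-2s\vphi_m}\xi_m^7 |\zeta|^2 \right),
\end{equation*}
for every $\lambda \geq \lambda_3$ and every $s \geq \sigma_3(T^{2m} + T^{2m-2/5})$ with $m \geq 2/5$, which is exactly \eqref{estimate_zeta}. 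The condition $m \geq 2/5$ and the explicit form of $s_3$ are inherited from the standard absorption of the lower--order terms generated by the $t$-- and $x$--derivatives of $e^{-s\vphi_m}\xi_m^{(\cdot)}$, carried out exactly as in the proof already used for $u$ in \Cref{lemma-carleman-u-app-1}.

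The only thing worth noting, and the reason the statement is simpler than its counterpart for $u$, is that the right--hand side of the $\zeta$--equation in \eqref{adj_sys_a_1} contains no internal coupling of the form $\zeta\,\mathds{1}_{\oo}$ (unlike the $u$--equation, whose source $-w_x + \zeta\,\mathds{1}_{\oo}$ forces an extra $\iint e^{-2s\vphi_m}|\zeta|^2$ term on the right of \eqref{carleman_u}). Hence no additional manipulation beyond a direct application of the Zhou estimate is required, and there is no genuine obstacle to overcome.
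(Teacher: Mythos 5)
Your proposal is correct and coincides with the paper's own treatment: the paper likewise regards the third equation of \eqref{adj_sys_a_1} as a fourth-order parabolic equation with right-hand side $\theta_x$ and simply invokes the Carleman estimate of \cite{Zhou} (as also applied in \cite{Cerpa-Mercado-Pazoto}), stating \Cref{lemma-carleman-zeta} without further proof. Your observations on the $t\mapsto T-t$ symmetry of the weights and on the absence of the coupling term $\zeta\,\mathds{1}_{\oo}$ in the source are accurate and consistent with that approach.
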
	

Lastly, by following \cite[Theorem 3.1]{Cerpa-Mercado-Pazoto}, we have a Carleman estimate for $\theta_x$ as given below, which can be proved using a result given by \cite[Lemma 6]{Gue13}. At this point, we can start with regular enough initial data $\theta_0$ in the equation \eqref{adj_sys_a_1} so that $\theta \in L^2(0,T; H^3(0,1)\cap H^1_0(0,1)) \cap H^1(0,T; H^1(0,1))$.
\begin{lemma}[Carleman inequality for $\theta_x$, the case $\alpha=1$] \label{lemma-carleman-theta_x}
	Let $\vphi_m$ and $\xi_m$ be defined by \eqref{weight_function} with $m>3$, $k>m$. Then, there exist positive constants $\lambda_4$, $s_4:=\sigma_4(T^{2m}+T^{2m-1})$ with some $\sigma_4>0$ and $C$, such that we have the following estimate for $\theta \in L^2(0,T; H^3(0,1)\cap H^1_0(0,1)) \cap H^1(0,T; H^1(0,1))$, 
	\begin{multline} \label{estimate_theta}
		s^3\lambda^4\iint e^{-2s\vphi_m} \xi^3_m |\theta_x|^2 + s\lambda^2 \iint e^{-2s\vphi_m} \xi_m |\theta_{xx}|^2 \\ 
		\leq C s\lambda^2 \iint e^{-2s\vphi_m}\xi^2_m \left(  |F_4|^2+|F_{4,x}|^2+|\zeta_x|^2  + |\zeta_{xx}|^2 \right) + C s^3\lambda^4 \iint_{\omega_0}  e^{-2s\vphi_m}\xi^3_m |\theta_x|^2, 
	\end{multline}
	for all $\lambda\geq \lambda_4$, $s\geq s_4$.    
\end{lemma}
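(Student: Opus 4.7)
The plan is to apply a Carleman inequality for the heat equation to the variable $\eta := \theta_x$, obtained by differentiating in $x$ the fourth equation of the adjoint system \eqref{adj_sys_a_1}. This strategy mirrors the one used in \cite[Theorem~3.1]{Cerpa-Mercado-Pazoto} for a cascade of a KS equation with a heat equation, and relies on the extension of Carleman inequalities to non-homogeneous Dirichlet data established in \cite[Lemma~6]{Gue13}.

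First, I would differentiate the equation $\theta_t - \theta_{xx} + \beta \theta_x = \zeta_x$ with respect to $x$, so that $\eta := \theta_x$ satisfies
\[
\eta_t - \eta_{xx} + \beta\, \eta_x = \zeta_{xx} \quad \text{in } Q_T.
\]
Contrary to $\theta$, the function $\eta$ does not satisfy homogeneous Dirichlet conditions on $\Sigma_T$, so the usual Carleman inequality of \cite{Fur-Ima} cannot be applied directly. Instead, I would invoke \cite[Lemma~6]{Gue13}, which produces a boundary trace contribution in the estimate. Applied to $\eta$ with the weights $(\vphi_m,\xi_m)$ from \eqref{weight_function} (with $m>3$, $k>m$), it yields, for $\lambda$ and $s$ large enough, an inequality of the form
\[
s^3\lambda^4\iint e^{-2s\vphi_m}\xi_m^3 |\theta_x|^2 + s\lambda^2\iint e^{-2s\vphi_m}\xi_m|\theta_{xx}|^2
\leq C\bigg(\iint e^{-2s\vphi_m}|\zeta_{xx}|^2 + \mathcal{B} + s^3\lambda^4\iint_{\omega_0} e^{-2s\vphi_m}\xi_m^3 |\theta_x|^2 \bigg),
\]
where $\mathcal{B}$ encodes the weighted boundary traces of $\theta_x$ and $\theta_{xx}$ on $\Sigma_T$.

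Second, the interior source term $\iint e^{-2s\vphi_m}|\zeta_{xx}|^2$ is absorbed into $Cs\lambda^2\iint e^{-2s\vphi_m}\xi_m^2|\zeta_{xx}|^2$ by using $1\leq C T^{2m}\xi_m$ and choosing $s\geq \sigma_4(T^{2m}+T^{2m-1})$. The truly delicate step, and the main obstacle, is handling $\mathcal{B}$: I would use the boundary conditions of \eqref{adj_sys_a_1}, namely $\theta|_{\Sigma_T}=0$ and $\zeta_x|_{\Sigma_T}=0$, and evaluate the $\theta$-equation at $x\in\{0,1\}$ to obtain the pointwise identity $\theta_{xx} = \beta\,\theta_x$ on $\Sigma_T$. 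This reduces all boundary trace contributions to weighted $L^2$-norms of $\theta_x(\cdot,0)$ and $\theta_x(\cdot,1)$.

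Third, to bound those remaining boundary traces, I would use a weighted trace/interpolation argument: multiply the $\eta$-equation by suitable $e^{-2s\vphi_m}\xi_m^b$ times cutoff functions in $x$, integrate by parts and use the identity $\zeta_x|_{\Sigma_T}=0$ together with the trace inequality $\|f\|_{L^\infty(0,1)}^2\leq C\|f\|_{L^2}\|f_x\|_{L^2}$ applied to $\theta_x$. This controls $\int_0^T e^{-2s\widehat\vphi_m}\xi_m^* (|\theta_x(t,0)|^2+|\theta_x(t,1)|^2)\,\mathrm{d}t$ by a small fraction of the left-hand side plus $Cs\lambda^2\iint e^{-2s\vphi_m}\xi_m^2(|\zeta_x|^2+|\zeta_{xx}|^2)$. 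Combining everything and absorbing the small fractions then yields \eqref{estimate_theta} for all $\lambda\geq\lambda_4$ and $s\geq s_4$.

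The hardest step is the treatment of the boundary term $\mathcal{B}$: the application of the Carleman inequality for $\eta$ inherently produces boundary contributions that the homogeneous Dirichlet condition on $\theta$ does not immediately kill, and one must exploit the coupling of the adjoint system on $\Sigma_T$ to convert them into interior quantities involving $\zeta_x$ and $\zeta_{xx}$ matching the form on the right-hand side of \eqref{estimate_theta}.
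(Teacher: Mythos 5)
Your proposal takes essentially the same route as the paper: the paper gives no detailed proof of this lemma but derives it from \cite[Theorem 3.1]{Cerpa-Mercado-Pazoto} together with \cite[Lemma 6]{Gue13}, and your sketch reconstructs exactly the strategy behind those citations --- differentiate the $\theta$-equation so that $\eta=\theta_x$ solves $\eta_t-\eta_{xx}+\beta\eta_x=\zeta_{xx}$ with nonhomogeneous Dirichlet data, apply the Carleman estimate for nonhomogeneous boundary conditions, and convert the resulting boundary traces into interior quantities using the boundary structure of \eqref{adj_sys_a_1} (in particular $\theta|_{\Sigma_T}=\zeta_x|_{\Sigma_T}=0$, whence $\theta_{xx}=\beta\theta_x$ on $\Sigma_T$) together with trace interpolation. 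The steps you identify, including the final absorption yielding the weights $s\lambda^2\xi_m^2$ on $|\zeta_x|^2+|\zeta_{xx}|^2$ in \eqref{estimate_theta}, are consistent with the cited proofs, so the proposal is correct in approach and matches the paper's.
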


\medskip 

Now, we are in the situation to prove our main Carleman inequality, that is  \Cref{theorem-carleman}.

\medskip

\begin{proof}[\bf{Proof of \Cref{theorem-carleman}}]  We divide it into several steps. 
	
	\smallskip 
	
	{\bf Step 1: Absorbing the lower order integrals.} Observe the following result:
	\begin{align*}
		1 \leq T^{2pm} \xi^p_m, \quad \forall p\in \mathbb N^*.
	\end{align*} 

--	Using this,  the lower order integrals in the r.h.s.  of the Carleman inequality \eqref{carleman_u} can be estimated as 
	\begin{align}\label{auxi-3}
		\iint e^{-2s\vphi_m} \Big(  |w_x|^2 + |\zeta|^2 \Big) \leq T^{2m}	\iint e^{-2s\vphi_m} \xi_m |w_x|^2 + T^{14 m}\iint e^{-2s\vphi_m} \xi^7_m |\zeta|^2,
	\end{align}
	whereas, the source integral in the r.h.s. of \eqref{carleman_w} satisfies
	\begin{align}\label{auxi-4}
		\iint e^{-2s\vphi_m}  |u_x|^2 \leq T^{10m}	\iint e^{-2s\vphi_m} \xi^5_m |u_x|^2.
	\end{align}
	Thus, by choosing any $\lambda\geq \lambda_0:= \max \{ \lambda_j: 1\leq j\leq 4\}$ fixed and $s\geq \sigma_5 T^{2m}$ for some $\sigma_5>0$, one can absorb all the integrals appearing in the r.h.s. of \eqref{auxi-3} and \eqref{auxi-4} by the associated leading integrals in the l.h.s. of \eqref{carleman_u} \eqref{carleman_w} and \eqref{estimate_zeta}.

	\smallskip 
	
--	Next,  the source integral in the r.h.s. of \eqref{estimate_zeta} enjoys 
	\begin{align}\label{auxi-1} 
		\iint e^{-2s\vphi_m} |\theta_x|^2 \leq T^{6m} \iint e^{-2s\vphi_m}\xi^3_m |\theta_x|^2,
	\end{align}
	%
	%
and the r.h.s. of  the Carleman inequality \eqref{estimate_theta} (for $\theta_x$) can be estimated as    
	\begin{multline}\label{auxi-2} 
		s\lambda^2\iint e^{-2s\vphi_m} \xi^2_m \left( |\zeta_x|^2 + |\zeta_{xx}|^2 \right)
		\\
		\leq 
		 s\lambda^2 \left(T^{6m} \iint e^{-2s\vphi_m} \xi^5_m|\zeta_x|^2 + T^{2m}\iint e^{-2s\vphi_m} \xi^3_m |\zeta_{xx}|^2\right).
	\end{multline}
	Then, by choosing any $\lambda\geq \lambda_0$ and $s\geq \sigma_6 (T^{m}+T^{3m/2})$ for some $\sigma_6>0$, the quantity in \eqref{auxi-1} can be absorbed by the 1st leading integral in the l.h.s. of \eqref{estimate_theta} and the quantities appearing in the r.h.s. of \eqref{auxi-2}, by the associated  leading integrals in the l.h.s. of \eqref{carleman_w} and \eqref{estimate_zeta}.

	\medskip

	So, after adding the inequalities: \eqref{carleman_u}, \eqref{carleman_w}, \eqref{estimate_zeta} and \eqref{estimate_theta}, and using the above absorption techniques,  we  obtain the following auxiliary estimate:
	\begin{multline}\label{Add_theta_zeta}
		I_{KS}(u) + I_{H}(w,3)+I_{KS}(\zeta) 
		+	s^3\lambda^4\iint e^{-2s\vphi_m} \xi^3_m |\theta_x|^2 + s\lambda^2 \iint e^{-2s\vphi_m} \xi_m |\theta_{xx}|^2 
		\\ 
		\leq C s^7\lambda^8\iint_{\omega_0}  e^{-2s\vphi_m} \xi^7_m |u|^2 +    Cs^3\lambda^4\iint_{\omega_0}  e^{-2s\vphi_m} \xi^3_m |w|^2   +Cs^7\lambda^8\iint_{\omega_0}  e^{-2s\vphi_m} \xi^7_m |\zeta|^2   \\  + Cs^3\lambda^4\iint_{\omega_0}  e^{-2s\vphi_m} \xi^3_m |\theta_x|^2 + C \iint e^{-2s\vphi_m} (|F_1|^2 + |F_2|^2   + |F_3|^2 ) \\
		+ C s\lambda^2 \iint e^{-2s\vphi_m}\xi^2_m (|F_4|^2 + |F_{4,x}|^2 ) , 
	\end{multline}
	for all $\lambda\geq \lambda_0$ and $s\geq \sigma_0(T^m+ T^{2m} + T^{2m-1}+T^{2m-2/5} +T^{3m/2}+T^{4m/3})$ where $\sigma_0:= \max\{\sigma_j \, ; \, 1 \leq j \leq 6\}$.  
	
	\smallskip 
	
	Now, our  duty is to absorb the observation integrals associated with $\theta_x$ and $\zeta$ by some leading integrals in the l.h.s. of \eqref{Add_theta_zeta}.  
	
	\smallskip 
	
	{\bf Step 2:  Absorbing the observation integral associated to $\theta_x$.} 	
	In the sequel, we choose a nonempty set $\widehat \omega_2\subset \subset  \widehat \omega_1 \subset \subset  \omega_0$ and a function 
	\begin{align*}
		\phi\in \C^\infty_c(\widehat \omega_1) \ \text{ with } \ 0\leq \phi\leq 1 \text{ in } \widehat \omega_1, \ \   \phi=1 \text{ in } \widehat \omega_2, 
	\end{align*}
	%
	and we consider the auxiliary Carleman estimate \eqref{Add_theta_zeta} with the observation domain $\widehat \omega_2$.  
	
	\smallskip 
	
	Our goal is to eliminate the observation integral of $\theta_x$.
	Using the equation of $\zeta$ from \eqref{adj_sys_a_1}, we have
	\begin{align}\label{estimate_theta_x}  
		& s^3\lambda^4\iint_{\widehat\omega_2}  e^{-2s\vphi_m} \xi^3_m |\theta_x|^2 \\ \notag  
		\leq &  s^3\lambda^4\iint_{\widehat\omega_1} \phi e^{-2s\vphi_m} \xi^3_m \theta_x (\zeta_t +\zeta_{xxxx}+ \gamma \zeta_{xx}-F_3)       \\ \notag  
		= 	& s^3\lambda^4 \iint_{\widehat\omega_1} \phi e^{-2s\vphi_m} \xi^3_m \theta_x (\zeta_t + \zeta_{xx}) +  s^3\lambda^4 \iint_{\widehat\omega_1} \phi e^{-2s\vphi_m} \xi^3_m \theta_x (\zeta_{xxxx} + (\gamma-1)\zeta_{xx})  \\ \notag 
		& \  - s^3\lambda^4 \iint_{\widehat \omega_1} \phi e^{-2s\vphi_m} \xi^3_m \theta_x F_3   \\ \notag 
		:=& I_1+I_2+I_3 .
	\end{align} 

\smallskip 
\noindent 
	-- {\em Estimate for $I_1$.}  We have 
	\begin{multline}\label{esti_I-1-1}
		I_{1}= 	s^3\lambda^4 \iint_{\widehat\omega_1} \phi e^{-2s\vphi_m} \xi^3_m \theta_x (\zeta_t + \zeta_{xx})\\
		= -s^3\lambda^4 \iint_{\widehat\omega_1} \phi (e^{-2s\vphi_m} \xi^3_m)_t \theta_x \zeta -s^3\lambda^4 \iint_{\widehat\omega_1} \phi e^{-2s\vphi_m} \xi^3_m (\theta_t)_x \zeta - s^3\lambda^4  \iint_{\widehat\omega_1} (\phi e^{-2s\vphi_m} \xi^3_m)_x \theta_x \zeta_x \\
		+ s^3\lambda^4 \iint_{\widehat\omega_1} (\phi e^{-2s\vphi_m} \xi^3_m )_x \theta_{xx} \zeta +  s^3\lambda^4 \iint_{\widehat\omega_1} \phi e^{-2s\vphi_m} \xi^3_m  (\theta_{xx})_x \zeta.
	\end{multline} 
	In above, there is no boundary terms at $t=0, T$ (while integrating by parts in time), since $e^{-2s\vphi_m(t,x)} \to 0$ as $t\to 0^+$ or $T^{-}$. We also perfomed
	two consecutive integration by parts in space as follows
	\begin{multline*}
		s^3\lambda^4 \iint_{\widehat\omega_1} \phi e^{-2s\vphi_m} \xi^3_m \theta_x \zeta_{xx} = - s^3\lambda^4  \iint_{\widehat\omega_1} (\phi e^{-2s\vphi_m} \xi^3_m)_x \theta_x \zeta_x - s^3\lambda^4  \iint_{\widehat\omega_1} \phi e^{-2s\vphi_m} \xi^3_m \theta_{xx}
		\zeta_x
		\\
		=- s^3\lambda^4  \iint_{\widehat\omega_1} (\phi e^{-2s\vphi_m} \xi^3_m)_x \theta_x \zeta_x 	+ s^3\lambda^4 \iint_{\widehat\omega_1} (\phi e^{-2s\vphi_m} \xi^3_m )_x \theta_{xx} \zeta +  s^3\lambda^4 \iint_{\widehat\omega_1} \phi e^{-2s\vphi_m} \xi^3_m  (\theta_{xx})_x \zeta.
	\end{multline*}
	Now, going back to \eqref{esti_I-1-1}, and using the equation 
	$\theta_t-\theta_{xx}= -\beta\theta_x +\zeta_x$, we have 
	\begin{align}\label{esti_I-1-2}
		I_1 = -s^3\lambda^4 \iint_{\widehat \omega_1} \phi e^{-2s\vphi_m} \xi^3_m (-\beta \theta_x +\zeta_x)_x \zeta + X_1 , 
	\end{align}
	where 
	\begin{multline*}
		X_1 := -s^3\lambda^4 \iint_{\widehat\omega_1} \phi (e^{-2s\vphi_m} \xi^3_m)_t \theta_x \zeta - s^3\lambda^4  \iint_{\widehat\omega_1} (\phi e^{-2s\vphi_m} \xi^3_m)_x \theta_x \zeta_x 
		+ s^3\lambda^4 \iint_{\widehat\omega_1} (\phi e^{-2s\vphi_m} \xi^3_m )_x \theta_{xx} \zeta.
	\end{multline*}
	Now, recall \eqref{weight_estimate-t-m} (for $m>1$) to write 
	$$|(e^{-2s\vphi_m} \xi^3_m)_t|\leq C T^{2m-2} s\xi_m^{2} (e^{-2s\vphi_m} \xi^3_m) . $$
	We use this fact in the first integral of $X_1$, and then thanks to the Cauchy-Schwarz inequality, we have
	for any $\epsilon>0$, that    
	\begin{multline}\label{esti_X-1}
		|X_1|\leq \epsilon s^3\lambda^4 \iint e^{-2s\vphi_m} \xi^3_m |\theta_x|^2 + \epsilon s\lambda^2 \iint e^{-2s\vphi_m}  \xi_m |\theta_{xx}|^2 \\
		+\frac{C}{\epsilon} s^5\lambda^6 \iint_{\widehat \omega_1}  e^{-2s\vphi_m} \xi^5_m |\zeta_x|^2 
		+ \frac{C}{\epsilon} s^5\lambda^6 \iint_{\widehat \omega_1} e^{-2s\vphi_m} \xi^5_m |\zeta|^2 .
	\end{multline}
	

\smallskip 

Let us estimate the other integrals of $I_1$ in \eqref{esti_I-1-2}. We have for any $\epsilon>0$ (again by applying Cauchy-Schwarz inequality) 
\begin{multline}\label{esti_auxi_I-1}
	s^3\lambda^4	\left| \iint_{\widehat \omega_1} \phi e^{-2s\vphi_m} \xi^3_m \Big(-\beta \theta_x +\zeta_x  \Big)_x \zeta   \right| \leq \epsilon s\lambda^2 \iint e^{-2s\vphi_m} \xi_m |\theta_{xx}|^2  \\
	+  \epsilon s^3\lambda^4 \iint e^{-2s\vphi_m} \xi^3_m |\zeta_{xx}|^2  
	+ \frac{C}{\epsilon} s^5\lambda^6 \iint_{\widehat \omega_1} e^{-2s\vphi_m} \xi^5_m |\zeta|^2.
\end{multline}

Therefore, the estimates \eqref{esti_X-1} and \eqref{esti_auxi_I-1} yields
\begin{multline}\label{esti_auxi_I_1-2}
	|I_1|\leq  \epsilon s^3\lambda^4 \iint e^{-2s\vphi_m} \xi^3_m |\theta_x|^2 + 2 \epsilon s\lambda^2 \iint e^{-2s\vphi_m}  \xi_m |\theta_{xx}|^2 
	+	\epsilon s^3\lambda^4 \iint e^{-2s\vphi_m} \xi^3_m |\zeta_{xx}|^2 
	 \\
	+ \frac{C}{\epsilon} s^5\lambda^6 \iint_{\widehat \omega_1} e^{-2s\vphi_m} \xi^5_m |\zeta_x|^2 
	+ \frac{C}{\epsilon} s^5\lambda^6 \iint_{\widehat \omega_1} e^{-2s\vphi_m} \xi^7_m |\zeta|^2 .
\end{multline}


\medskip
\noindent 
--  {\em Estimate for $I_2$.} Let us recall the quantity $I_2$ from \eqref{estimate_theta_x} and we have 
\begin{align*}
I_2&= s^3\lambda^4 \iint_{\widehat\omega_1} \phi e^{-2s\vphi_m} \xi^3_m \theta_x (\zeta_{xxxx} + (\gamma-1)\zeta_{xx}) \\
&=  - s^3\lambda^4 \iint_{\widehat\omega_1} (\phi e^{-2s\vphi_m} \xi^3_m)_x \theta_x \zeta_{xxx} -  s^3\lambda^4 \iint_{\widehat\omega_1} \phi e^{-2s\vphi_m} \xi^3_m\big( \theta_{xx} \zeta_{xxx} - (\gamma-1) \theta_x \zeta_{xx}\big) ,  
\end{align*}
where we perform an integration by parts on the term involving fourth order derivative in $\zeta$. It follows that
\begin{multline}\label{esti_I-2}
|I_2|\leq  \epsilon s^3\lambda^4 \iint_{\widehat\omega_1}  e^{-2s\vphi_m} \xi^3_m |\theta_x|^2 + \epsilon s\lambda^2 \iint_{\widehat\omega_1} e^{-2s\vphi_m} \xi_m |\theta_{xx}|^2 \\
+ \frac{C}{\epsilon} s^3\lambda^4 \iint_{\widehat\omega_1} e^{-2s\vphi_m} \xi^3_m |\zeta_{xx}|^2 + \frac{C}{\epsilon} s^5\lambda^6 \iint_{\widehat\omega_1} e^{-2s\vphi_m} \xi^5_m |\zeta_{xxx}|^2.
\end{multline}

\smallskip 
\noindent
-- {\em Estimate for $I_3$.} Finally, we have 
\begin{equation}\label{esti_I_3}
	\begin{aligned}
		|I_3| \leq \epsilon s^3\lambda^4 \iint_{\widehat \omega_1}  e^{-2s\vphi_m} \xi^3_m |\theta_x|^2 + \frac{C}{\epsilon} s^3 \lambda^4 \iint e^{-2s\vphi_m}\xi^3_m |F_3|^2,  
	\end{aligned}
\end{equation}
for any given $\epsilon>0$. 

Now, using the estimates of $I_1$, $I_2$ and $I_3$ given by \eqref{esti_auxi_I_1-2}, \eqref{esti_I-2} and \eqref{esti_I_3} respectively, we have from \eqref{estimate_theta_x} that 
\begin{multline}\label{esti_theta_x-1}
s^3\lambda^4 \iint_{\widehat \omega_1} e^{-2s\vphi_m}  \xi^3_m |\theta_x|^2 \leq C \epsilon s^3\lambda^4 \iint e^{-2s\vphi_m} \xi^3_m |\theta_x|^2 + C \epsilon s\lambda^2 \iint e^{-2s\vphi_m}  \xi_m |\theta_{xx}|^2 \\
+ C \epsilon s^3\lambda^4 \iint e^{-2s\vphi_m}  \xi^3_m |\zeta_{xx}|^2 +\frac{C}{\epsilon} s^5\lambda^6 \iint_{\widehat \omega_1} e^{-2s\vphi_m} \xi^5_m |\zeta|^2 
\\
+ \frac{C}{\epsilon} \iint_{\widehat \omega_1} \Big( s^5\lambda^6  e^{-2s\vphi_m} \xi^5_m |\zeta_x|^2 
+ s^3\lambda^4  e^{-2s\vphi_m} \xi^3_m |\zeta_{xx}|^2 
+  s^5\lambda^6 e^{-2s\vphi_m} \xi^5_m |\zeta_{xxx}|^2 \Big) \\
+ \frac{C}{\epsilon} s^3 \lambda^4 \iint e^{-2s\vphi_m}\xi^3_m |F_3|^2.
\end{multline}
Fix $\epsilon>0$ small enough so that we can absorb the first three integrals in the r.h.s. of \eqref{esti_theta_x-1} by the associated leading terms in the l.h.s. of \eqref{Add_theta_zeta}.

\smallskip

Next, one can deduce the following result: for any $\epsilon>0$, there exists $C>0$ such that,
\begin{multline}\label{esti_theta_x-2}
\iint_{\widehat \omega_1} \Big( s^5\lambda^6  e^{-2s\vphi_m} \xi^5_m |\zeta_x|^2 
+ s^3\lambda^4  e^{-2s\vphi_m} \xi^3_m |\zeta_{xx}|^2 
+  s^5\lambda^6 e^{-2s\vphi_m} \xi^5_m |\zeta_{xxx}|^2 \Big) \\
\leq \epsilon \iint e^{-2s\vphi_m}\Big( (s\xi_m)^{-1} |\zeta_{xxxx}|^2 + s\lambda^2 \xi_m |\zeta_{xxx}|^2  + s^3\lambda^4 \xi^3_m |\zeta_{xx}|^2  \Big)  + \frac{C}{\epsilon} s^{39}\lambda^{40}  \iint_{\omega_0} e^{-2s\vphi_m} \xi^{39}_m |\zeta|^2 ,
\end{multline}
assuming that there is a set $\widehat \omega_0$ such that $\widehat \omega_1 \subset \subset \widehat \omega_0 \subset \subset \omega_0$.

The above proof can be done by performing several integration by parts in space and applying the Cauchy-Schwarz inequalty accordingly. We omit the details here. 


\smallskip 

Then, for $\epsilon>0$ small enough, we can absorb the first three integrals in the r.h.s. of \eqref{esti_theta_x-2} by the corresponding leading integrals in the l.h.s. of \eqref{Add_theta_zeta} and as a consequence, we have 
\begin{multline}\label{Add_theta_zeta-2}
I_{KS}(u) + I_H(w,3) + I_{KS}(\zeta) 
+	s^3\lambda^4\iint e^{-2s\vphi_m} \xi^3_m |\theta_x|^2 + s\lambda^2 \iint e^{-2s\vphi_m} \xi_m |\theta_{xx}|^2 
\\ 
\leq C s^7\lambda^8\iint_{\omega_0}  e^{-2s\vphi_m} \xi^7_m |u|^2 +    Cs^3\lambda^4\iint_{\omega_0}  e^{-2s\vphi_m} \xi^3_m |w|^2   +Cs^{39}\lambda^{40}\iint_{\omega_0}  e^{-2s\vphi_m} \xi^{39}_m |\zeta|^2  \\
+ C \iint e^{-2s\vphi_m} (|F_1|^2 + |F_2|^2 )   
+ Cs^3\lambda^4 \iint e^{-2s\vphi_m} \xi^3_m|F_3|^2 \\
+ Cs\lambda^2 \iint e^{-2s\vphi_m}\xi^2_m( |F_4|^2 + |F_{4,x}|^2 ). 
\end{multline}

\medskip 

{\bf Step 3: Absorbing the observation integral associated to $\zeta$.} 
In the previous step, one can assume a couple of nonempty sets $\widehat \omega_2 \subset \subset \widehat \omega_1 \subset \subset \omega_1 \subset \subset \omega_0$ and prove the auxiliary inequality \eqref{Add_theta_zeta-2} with the observation domain $\omega_1$.  

Then,  choose a function $\phi_1 \in \C^\infty_c(\omega_0)$ with $0\leq \phi_1 \leq 1$ in $\omega_0$ and $\phi_1 =1$ in $\omega_1$. Also, one has from the 
$4\times 4$ adjoint system \eqref{adj_sys_a_1} that 
$$ \zeta = -u_t+ u_{xxxx} +\gamma u_{xx} + w_x - F_1 \ \text{ in }  \omega_0, \text{ since }  \omega_0 \subset \subset \oo.$$ 
Therefore, we observe that 
\begin{multline}\label{esti_zeta}
s^{39}\lambda^{40}\iint_{\omega_1}  e^{-2s\vphi_m} \xi^{39}_m |\zeta|^2 \leq s^{39}\lambda^{40}\iint_{\omega_0} \phi_1   e^{-2s\vphi_m} \xi^{39}_m |\zeta|^2 \\
= s^{39}\lambda^{40}\iint_{\omega_0}  \phi_1 e^{-2s\vphi_m} \xi^{39}_m \zeta( -u_t +u_{xxxx} +\gamma u_{xx} + w_x -F_1\big)  := I_4+ I_5 + I_6+ I_7+I_8.
\end{multline}

\smallskip
\noindent 
--  {\em Estimate for $I_4$.}
First, we compute 
\begin{multline}
I_4: = - s^{39}\lambda^{40}\iint_{\omega_0}  \phi_1 e^{-2s\vphi_m} \xi^{39}_m \zeta u_t = s^{39}\lambda^{40}\iint_{\omega_0}  \phi_1 e^{-2s\vphi_m} \xi^{39}_m \zeta_t  u  \\ 
+ s^{39}\lambda^{40}\iint_{\omega_0}  \phi_1 \big(e^{-2s\vphi_m} \xi^{39}_m\big)_t \zeta u. 
\end{multline}
Let us recall \eqref{weight_estimate-t-m} (for $m>1$), to write
$$|(e^{-2s\vphi_m} \xi^{39}_m)_t|\leq C  T^{2m-2} s e^{-2s\vphi_m} \xi^{41}_m . $$
Using the above fact and the Cauchy-Schwarz inequality, we have for any $\epsilon>0$ that 
\begin{align}\label{estimate_I-3}
|I_4| \leq \epsilon s^{-1} \iint e^{-2s\vphi_m} \xi^{-1}_m |\zeta_t|^2 +  \epsilon s^7 \lambda^8 \iint e^{-2s\vphi_m} \xi^{7}_m |\zeta|^2 + \frac{C}{\epsilon} s^{79}\lambda^{80} \iint_{\omega_0} e^{-2s\vphi_m} \xi^{79}_m |u|^2 . 
\end{align}

\medskip
\noindent 
-- {\em Estimate for $I_5$.}
Next, by performing a successive number of integration by parts on $I_5$ w.r.t. $x$, we get
\begin{multline}\label{estimate_I-4}
|I_5| : = \left| s^{39}\lambda^{40}\iint_{\omega_0}  \phi_1 e^{-2s\vphi_m} \xi^{39}_m \zeta u_{xxxx} \right| \leq C \Big(  s^{39} \lambda^{40} \iint_{\omega_0}  \big(\phi_1 e^{-2s\vphi_m} \xi^{39}_{m}\big)_{xxxx} \zeta u  \\ +  s^{39} \lambda^{40} \iint_{\omega_0}  \big(\phi_1 e^{-2s\vphi_m} \xi^{39}_{m}\big)_{xxx} \zeta_x u   +  s^{39} \lambda^{40} \iint_{\omega_0}  \big(\phi_1 e^{-2s\vphi_m} \xi^{39}_{m}\big)_{xx} \zeta_{xx} u  \\
+ s^{39} \lambda^{40} \iint_{\omega_0}  \big(\phi_1 e^{-2s\vphi_m} \xi^{39}_{m}\big)_{x} \zeta_{xxx} u   + s^{39} \lambda^{40} \iint_{\omega_0}  \phi_1 e^{-2s\vphi_m} \xi^{39}_{m} \zeta_{xxxx} u \Big) .
\end{multline}
Let us recall the result \eqref{deri-esti}, so that we have the following:
\begin{align*}
\left|\big( \phi_1 e^{-2s\vphi_m} \xi^{39}_{m}   \big)_{n,x}    \right| \leq C s^n \lambda^n \, e^{-2s\vphi_m} \xi^{39+n}_m , \quad \text{for } n\in \mathbb N^*   ,      
\end{align*}
and  the estimate \eqref{estimate_I-4} follows 
\begin{multline}
|I_5|  \leq C \Big(  s^{43} \lambda^{44} \iint_{\omega_0}   e^{-2s\vphi_m} \xi^{43}_{m} \zeta u  +  s^{42} \lambda^{43} \iint_{\omega_0}   e^{-2s\vphi_m} \xi^{42}_{m} \zeta_x u   +  s^{41} \lambda^{42} \iint_{\omega_0}  e^{-2s\vphi_m} \xi^{39}_{m} \zeta_{xx} u  \\
+ s^{40} \lambda^{41} \iint_{\omega_0}   e^{-2s\vphi_m} \xi^{40}_{m} \zeta_{xxx} u   + s^{39} \lambda^{40} \iint_{\omega_0}  e^{-2s\vphi_m} \xi^{39}_{m} \zeta_{xxxx} u \Big) .
\end{multline}
Then, for any $\epsilon>0$, we obtain by using Cauchy-Schwarz inequality that 
\begin{multline}\label{estimate_I-4-eps}
|I_5|\leq \epsilon \bigg(  s^7\lambda^8 \iint e^{-2s\vphi_m} \xi^7_m |\zeta|^2 + s^5\lambda^6 \iint e^{-2s\vphi_m} \xi^5_m |\zeta_{x}|^2 +  s^3\lambda^4\iint e^{-2s\vphi_m} \xi^3_m |\zeta_{xx}|^2 \\
+ s\lambda^2 \iint e^{-2s\vphi_m} \xi_m |\zeta_{xxx}|^2   +	s^{-1} \iint e^{-2s\vphi_m}\xi^{-1}_m (|\zeta_t|^2+ |\zeta_{xxxx}|^2\big)     \bigg) \\ 
+ \frac{C}{\epsilon} s^{79}\lambda^{80} \iint_{\omega_0}  e^{-2s\vphi_m} \xi^{79}_m |u|^2 .
\end{multline}

\smallskip 
\noindent 
-- {\em Estimate for $I_6$.} In a similar manner, one can obtain an estimate for $I_6$, given by  
\begin{multline}\label{esti_I_5_eps}
	|I_6|  \leq \epsilon \bigg(  s^7\lambda^8 \iint e^{-2s\vphi_m} \xi^7_m |\zeta|^2 + s^5\lambda^6 \iint e^{-2s\vphi_m} \xi^5_m |\zeta_{x}|^2 +  s^3\lambda^4\iint e^{-2s\vphi_m} \xi^3_m |\zeta_{xx}|^2 \\
	+ \frac{C}{\epsilon}  s^{75}\lambda^{76}\iint_{\omega_0}  e^{-2s\vphi_m} \xi^{75}_m |u|^2 . 
	\end{multline}


\smallskip 
\noindent 
-- {\em Estimate for $I_7$.}
Let us  focus on the term $I_7$, we have 
\begin{multline}\label{estimate_I-6}
|I_7| \leq 	\left|-s^{39}\lambda^{40}\iint_{\omega_0}  (\phi_1 e^{-2s\vphi_m} \xi^{39}_m)_x \zeta w 
-	s^{39}\lambda^{40}\iint_{\omega_0}  \phi_1 e^{-2s\vphi_m} \xi^{39}_m \zeta_x w \right|\\
\leq \epsilon \left( s^7\lambda^8 \iint  e^{-2s\vphi_m} \xi^7_m |\zeta|^2 +  s^5\lambda^6 \iint  e^{-2s\vphi_m} \xi^5_m |\zeta_x|^2 \right) 
+ \frac{C}{\epsilon}  s^{73} \lambda^{74} \iint_{\omega_0}  e^{-2s\vphi_m} \xi^{73}_m |w|^2. 
\end{multline}

\smallskip 
\noindent 
-- {\em Estimate for $I_8$.} Finally, we see
\begin{multline}\label{esti_I_8}
	|I_8| = \left|  s^{39}\lambda^{40} \iint_{\omega_0} \phi_1 e^{-2s\vphi_m} \xi^{39}_m \zeta F_1 \right| \\
	\leq \epsilon s^7 \lambda^8 \iint e^{-2s\vphi_m}\xi^7_m |\zeta|^2 + \frac{C}{\epsilon} s^{71}\lambda^{72} \iint e^{-2s\vphi_m}\xi^{71}_m |F_1|^2 ,  
\end{multline}
for any  $\epsilon>0$.

\smallskip 
Thus, using the estimates \eqref{estimate_I-3}, \eqref{estimate_I-4-eps}, \eqref{esti_I_5_eps}, \eqref{estimate_I-6} and \eqref{esti_I_8} in \eqref{esti_zeta}, we have 
\begin{multline}\label{estimate-zeta-final}
s^{39}\lambda^{40}\iint_{\omega_1}  e^{-2s\vphi_m} \xi^{39}_m |\zeta|^2 \leq C \epsilon   \bigg(  s^7\lambda^8 \iint e^{-2s\vphi_m} \xi^7_m |\zeta|^2 + s^5\lambda^6 \iint e^{-2s\vphi_m} \xi^5_m |\zeta_{x}|^2 \\
+  s^3\lambda^4\iint e^{-2s\vphi_m} \xi^3_m |\zeta_{xx}|^2 
+ s\lambda^2 \iint e^{-2s\vphi_m} \xi_m |\zeta_{xxx}|^2   +	s^{-1} \iint e^{-2s\vphi_m}\xi^{-1}_m (|\zeta_t|^2+ |\zeta_{xxxx}|^2\big)     \bigg) \\ 
+ \frac{C}{\epsilon} s^{79}\lambda^{80} \iint_{\omega_0}  e^{-2s\vphi_m} \xi^{79}_m |u|^2 	+ \frac{C}{\epsilon}  s^{73} \lambda^{74} \iint_{\omega_0}  e^{-2s\vphi_m} \xi^{73}_m |w|^2 + \frac{C}{\epsilon} s^{71}\lambda^{72} \iint e^{-2s\vphi_m}\xi^{71}_m |F_1|^2. 
\end{multline}
Now, fix $\epsilon>0$ in \eqref{estimate-zeta-final} small enough so that all the integrals in $Q_T$ can be absorbed by the quanity $I_{KS}(\zeta)$ in \eqref{Add_theta_zeta-2}, and this yields the required Carleman inequality \eqref{Carleman-first-approach}. 
\end{proof}

\smallskip

\subsection{Carleman estimate for the case when $\boldsymbol{\alpha \in (0,1)}$}\label{carleman-alpha=0,1}

We just state the Carleman estimate for the case $\alpha\in (0,1)$. In this case, the proof is simpler than the previous two cases since we just need to use the 
 standard Carleman estimates of the fourth and second order parabolic equations.   

Recall the adjoint system \eqref{adj_sys} for any $\alpha \in (0,1)$. Then, one can obtain the following Carleman inequality. 
\begin{theorem}[Carleman inequality: the case $\alpha\in (0,1)$]\label{Thm-Carleman-first-app}
	Let the weight functions $(\vphi_m, \xi_m)$  be given by \eqref{weight_function}  with $m\geq 1$. Then, there exist positive constants $\widehat \lambda$, $\widehat s:= \widehat \sigma(T^m+ T^{2m-2/5}+ T^{2m-1}+ T^{2m})$ with some $\widehat \sigma>0$ and $C$ such that we have the following estimate satisfied by the solution to \eqref{adj_sys}:
	\begin{multline}\label{Carleman-app-1} 
		I_{KS}(u) + I_{H}(w;3) + I_{KS}(\zeta) + 
		I_{H}(\theta;3)     \\
		\leq C \iint e^{-2s\vphi_m} \left( s^7\lambda^8 \xi^7_m |F_1|^2 + s^3\lambda^4 \xi^3_m |F_2|^2 + |F_3|^2 + |F_4|^2   \right) \\
		+ C s^{15} \lambda^{16} \iint_{\omega_0} e^{-2s\vphi_m} \xi^{15}_m |u|^2 + C s^{9} \lambda^{10} \iint_{\omega_0} e^{-2s\vphi_m} \xi^9_m |w|^2,  
	\end{multline}
	for all $\lambda\geq \widehat  \lambda$ and  $s\geq \widehat s$, where $I_{KS}(\cdot)$ and $I_H(\cdot;\cdot)$ are given by \eqref{Notation_KS} and \eqref{Notation_heat} respectively. 
\end{theorem}

As stated before, the technique for proving \Cref{Thm-Carleman-first-app} is the following. 
\begin{itemize}
	\item[(i)] For the variables $u$ and $\zeta$, satisfying fourth order parabolic equations, we use the Carleman estimate given by \Cref{thm-Zhou}.

	\item[(ii)]  For $w$ and $\theta$, we use the classical Carleman inequalities for the heat equations, see \Cref{thm-Fur-Ima}. 
\end{itemize} 
 
\smallskip 

A sketch of the proof for \Cref{Thm-Carleman-first-app} is available in the arxiv version \cite[Section 2.1]{bhandari_Santamaria}.

\smallskip

\section{Null-controllability of the studied systems for different $\boldsymbol \alpha$}\label{Section-local-null}

In this section, we establish the local null-controllability of our extended system \eqref{sys_equiv_1}--\eqref{sys_equiv_2} for different values of $\alpha\in [0,1]$. But as mentioned earlier, the most interesting cases are $\alpha=0$ and $\alpha=1$. Therefore, we mainly discuss the controllability for  these two cases.

\smallskip 

As we have mentioned, the main ingredient to prove \Cref{thm:main_extended} is to first obtain a suitable observability inequality for system \eqref{adj_sys} which will ensure the null-controllability for the extended linearized systems \eqref{sys_linear-1}--\eqref{sys_linear-2}.  Due to the presence of the parameter $\alpha$ in the system, we have used different strategies leading to different Carleman estimates, see Theorems  \ref{Theorem-Carleman-second-app}, \ref{theorem-carleman} and \ref{Thm-Carleman-first-app}.

\smallskip 

In our present work, we mainly give a detailed proof of the observability inequality derived from \Cref{Theorem-Carleman-second-app}. The case $\alpha=1$ will be then shortly described in Section \ref{section-alpha=1}; more precisely, there we point out the main changes in the proof of observability inequality and the null-controllablity in comparison with the case $\alpha=0$.

\smallskip 

After the study of linear cases, we use the so-called {\em inverse mapping theorem} to handle the local null-controllability of the nonlinear systems \eqref{sys_equiv_1}--\eqref{sys_equiv_2}. 

\smallskip 

The required proofs for the more standard case $\alpha\in (0,1)$  can be done in a similar fashion by making minor adjustments. 

\smallskip 

We  proceed one by one.

\subsection{The case when $\boldsymbol{\alpha=0}$}

\subsubsection{Observability inequality ($\boldsymbol{\alpha=0}$)}
\label{sec-obser-0}


To do this, we shall first prove  a refined Carleman inequality with weight functions that do not vanish at $t=T$. More precisely, let us consider

\begin{equation}
\ell(t)=
\begin{cases}
t(T-t), & 0\leq t\leq  T/2, \\
T^2/4, &T/2\leq t\leq T,
\end{cases}
\end{equation}
and the following associated weight functions
\begin{align}\label{weight_function_not_0}
	\mathfrak{S}_m(t,x)= \frac{e^{\lambda(1+\frac{1}{m})k \|\nu\|_{\infty}   }-  e^{\lambda\big( k\|\nu\|_{\infty} + \nu(x) \big)}}{\ell(t)^m}	, \quad \mathfrak{Z}_{m}(t,x) = \frac{e^{\lambda\big( k\|\nu\|_{\infty} + \nu(x) \big)}}{\ell(t)^m}, \quad \forall (t,x) \in Q_T.
\end{align}
for any constants $\lambda>1$ and $k>m>0$. Additionally, we define 
\begin{align}\label{max_min_not_0}
		\widehat{\mathfrak S}_m(t) = \max_{x\in [0,1]} \mathfrak{S}_m(t,x), \quad \widehat{\mathfrak{Z}}_m(t) = \max_{x\in [0,1]} \mathfrak{Z}_m(t,x), \\
		\label{max_min_not_1}
			{\mathfrak S}^*_m(t) = \min_{x\in [0,1]} \mathfrak{S}_m(t,x), \quad \mathfrak{Z}^*_m(t) = \min_{x\in [0,1]} \mathfrak{Z}_m(t,x). 
\end{align}

We have the following.
\begin{proposition}[A refined Carleman estimate: the case $\alpha=0$]\label{prop:refined_a_0}
Let $m$, $k$,  $s$ and $\lambda$ be fixed constants according to \Cref{Theorem-Carleman-second-app}. Then, there exists a positive constant $C$ depending at most on $\omega, \mathcal O$, $T$, $m$, $k$, $s$, and $\lambda$ such that
\begin{multline}\label{refined_second-app} 
			\iint \left(e^{-2s \widehat{\mathfrak{S}}_m} (\mathfrak{Z}^*_m)^7 |u|^2 +  e^{-2s \widehat{\mathfrak{S}}_m} (\mathfrak{Z}^*_m)^7 |w|^2 + e^{-2s \widehat{\mathfrak{S}}_m} (\mathfrak{Z}^*_m)^7 |\zeta|^2+ e^{-2s \widehat{\mathfrak{S}}_m} (\mathfrak{Z}^*_m)^9 |\theta|^2 \right)   \\  
			  + \|\zeta(T)\|^2_{L^2(0,1)} + \| \theta(T)\|^2_{L^2(0,1)} 
				 \leq  
			C \bigg[\iint e^{-2s\mathfrak{S}^*_m} |F_1|^2 + \iint  e^{-10s\mathfrak{S}^*_m + 8s\widehat{\mathfrak{S}}_m} \widehat{\mathfrak{Z}}^{37}_m |F_2|^2 \\
			+ \iint e^{-2s\mathfrak{S}^*_m} \big(\widehat{\mathfrak{Z}}^5_m|F_3|^2 + \widehat{\mathfrak{Z}}^7_m |F_4|^2\big)  
		+	  \iint_{\omega_0} e^{-10s\mathfrak{S}^*_m + 8s\widehat{\mathfrak{S}}_m} \widehat{\mathfrak{Z}}^{39}_m |u|^2 +   \iint_{\omega_0}e^{-10s\mathfrak{S}^*_m + 8s\widehat{\mathfrak{S}}_m} \widehat{\mathfrak{Z}}^{41}_m|w|^2\bigg] ,
\end{multline}
 where $(u,w,\zeta,\theta)$ is the solution associated to \eqref{adj_sys_a_0}, for any given $(\zeta_0,\theta_0)\in [L^2(0,1)]^2$ and $F_j\in L^2(Q_T)$ for $j=1,2,3,4$.
\end{proposition}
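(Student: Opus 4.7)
The plan is the classical one used to convert a Carleman inequality whose weights degenerate at both endpoints into one whose weights only degenerate at $t=0$: split $(0,T)$ at $t=T/2$ and combine \eqref{Carleman-second-app} on $[0,T/2]$ with energy estimates on $[T/2,T]$. Observe first that on $[0,T/2]$ one has $\ell(t)=t(T-t)$, so $\mathfrak{S}_m = \varphi_m$, $\widehat{\mathfrak{S}}_m = \widehat\varphi_m$, $\mathfrak{Z}_m = \xi_m$ and $\mathfrak{Z}_m^* = \xi_m^*$; on $[T/2,T]$ the modified weights are \emph{constant in $t$}, so every weighted integral over $(T/2,T)\times(0,1)$ reduces to an unweighted $L^2$-integral up to positive multiplicative constants depending only on $T,m,k,s,\lambda$.

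For the LHS of \eqref{refined_second-app} restricted to $(0,T/2)\times(0,1)$, the pointwise inequalities $e^{-2s\widehat{\mathfrak{S}}_m} \leq e^{-2s\mathfrak{S}_m} = e^{-2s\varphi_m}$ and $\mathfrak{Z}_m^* \leq \mathfrak{Z}_m = \xi_m$ show that this piece is absorbed into the LHS of \eqref{Carleman-second-app} after dividing by the polynomial factors $s^j\lambda^j$ attached there. For the contribution on $(T/2,T)\times(0,1)$, it therefore suffices to control $\int_{T/2}^T(\|u\|^2+\|w\|^2+\|\zeta\|^2+\|\theta\|^2)\,dt$. Exploiting the forward/backward structure of \eqref{adj_sys_a_0}, multiplication of each equation by the corresponding unknown, integration by parts in $x$, and the boundary conditions yield
\[
\tfrac{d}{dt}\bigl(\|\zeta\|^2+\|\theta\|^2\bigr) \leq C\bigl(\|\zeta\|^2+\|\theta\|^2\bigr), \qquad -\tfrac{d}{dt}\bigl(\|u\|^2+\|w\|^2\bigr) \leq C\bigl(\|u\|^2+\|w\|^2\bigr) + C\|\theta\|^2.
\]
Using $u(T)=w(T)=0$ and Gronwall's inequality (forward for $(\zeta,\theta)$, backward for $(u,w)$), one obtains
\[
\int_{T/2}^T\bigl(\|u\|^2+\|w\|^2+\|\zeta\|^2+\|\theta\|^2\bigr)\,dt \leq C\bigl(\|\zeta(T/2)\|_{L^2}^2 + \|\theta(T/2)\|_{L^2}^2\bigr).
\]

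To estimate the pivot $\|\zeta(T/2)\|_{L^2}^2 + \|\theta(T/2)\|_{L^2}^2$, the same forward Gronwall inequality run backwards on $[T/4,T/2]$ gives $\|\zeta(T/2)\|_{L^2}^2 + \|\theta(T/2)\|_{L^2}^2 \leq C \int_{T/4}^{T/2}(\|\zeta\|^2+\|\theta\|^2)\,dt$. Since $e^{-2s\varphi_m}$, $e^{-2s\widehat\varphi_m}$, $\xi_m$, $\xi_m^*$ are bounded above and below by positive constants on $[T/4,T/2]\times(0,1)$, this right-hand side is dominated by the terms $s^7\lambda^8 \iint e^{-2s\widehat\varphi_m}(\xi_m^*)^7|\zeta|^2$ and $I_H(\theta;9)$ appearing on the LHS of \eqref{Carleman-second-app}, hence by its RHS. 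It remains to compare right-hand sides: on $[0,T/2]$ the integrands of \eqref{Carleman-second-app} and \eqref{refined_second-app} coincide, while on $[T/2,T]$ the estimate $-10s\varphi_m + 8s\widehat\varphi_m \leq -\delta s/(t^m(T-t)^m)$ from the Lemma quoted in the Remark following \Cref{Theorem-Carleman-second-app} keeps $e^{-10s\varphi_m + 8s\widehat\varphi_m}\xi_m^{j}$ bounded, whereas $e^{-10s\mathfrak{S}_m + 8s\widehat{\mathfrak{S}}_m}\mathfrak{Z}_m^{j}$ is a positive constant; the two integrands therefore differ by at most a multiplicative constant depending only on the now-fixed parameters.

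The main obstacle is the energy estimate above, where the forward-backward coupling forces a sequential treatment: one must close the estimates for $(\zeta,\theta)$ first by Gronwall and then feed the resulting bound for $\theta$ into the backward estimate for $(u,w)$ via the term $\theta\mathds{1}_{\mathcal O}$ in the $w$-equation; the four components cannot be handled symmetrically. Every other step reduces to routine manipulation with positive constants that are allowed to depend on $T,m,k,s,\lambda$, which is permitted by the statement.
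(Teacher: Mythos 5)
Your proposal is correct and follows essentially the same route as the paper: the Carleman estimate \eqref{Carleman-second-app} on $(0,T/2)$ where the modified weights coincide with the original ones, energy estimates for the forward pair $(\zeta,\theta)$ and then for the backward pair $(u,w)$ (fed by the source $\theta\mathds{1}_{\mathcal O}$) to control the strip $(T/2,T)$, and boundedness of the weights on $[T/4,T]$ together with \Cref{Lemma-auxiliary} to compare the two right-hand sides. The only cosmetic difference is that you run Gronwall differential inequalities with a pivot at $t=T/2$, whereas the paper multiplies the equations by a cutoff $\eta$ (vanishing on $[0,T/4]$, equal to $1$ on $[T/2,T]$) and invokes the appendix well-posedness estimates; the mathematical content, including the sequential forward-then-backward treatment you emphasize, is the same.
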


\begin{proof}

	Let us first fix the Carleman parameters $s=s^*$ and $\lambda=\lambda^*$ in the estimate \eqref{Carleman-second-app} given by Theorem \ref{Theorem-Carleman-second-app}.

Now, we start by the  construction $\varphi_m=\mathfrak{S}_m$ and $\xi_m=\mathfrak{Z}_m$ in $(0,T/2)\times(0,1)$, hence
\begin{align*}
	&\int_0^{\frac{T}{2}}\int_0^1\left(e^{-2s \mathfrak{S}_m} \mathfrak{Z}^7_m |u|^2 +  e^{-2s \mathfrak{S}_m} \mathfrak{Z}^7_m |w|^2 + e^{-2s \widehat{\mathfrak{S}}_m} (\mathfrak{Z}^*)^7_m |\zeta|^2+ e^{-2s \mathfrak{S}_m} \mathfrak{Z}^9_m |\theta|^2 \right) \\
	& =\int_0^{\frac{T}{2}}\int_0^1\left(e^{-2s \vphi_m} \xi^7_m |u|^2 +  e^{-2s \vphi_m} \xi^7_m |w|^2 + e^{-2s \widehat{\vphi}_m} (\xi^*)^7_m |\zeta|^2+ e^{-2s \vphi_m} \xi^9_m |\theta|^2 \right) .
\end{align*}
Therefore, using the Carleman estimate  \eqref{Carleman-second-app}, 
 and the definitions \eqref{max_min_not_0}--\eqref{max_min_not_1},  we readily get
\begin{multline}\label{aux-car-1}
	\int_0^{\frac{T}{2}} \int_0^1 \left(e^{-2s \widehat{\mathfrak{S}}_m} (\mathfrak{Z}^*)^7_m |u|^2 +  e^{-2s \widehat{\mathfrak{S}}_m} (\mathfrak{Z}^*)^7_m |w|^2 + e^{-2s \widehat{\mathfrak{S}}_m} (\mathfrak{Z}^*)^7_m |\zeta|^2+ e^{-2s \widehat{\mathfrak{S}}_m} (\mathfrak{Z}^*)^9_m |\theta|^2 \right) \\
	\leq 	C \bigg[\iint e^{-2s\mathfrak{S}^*_m} |F_1|^2 + \iint  e^{-10s\mathfrak{S}^*_m + 8s\widehat{\mathfrak{S}}_m} \widehat{\mathfrak{Z}}^{37}_m |F_2|^2 + \iint e^{-2s\mathfrak{S}^*_m} \big(\widehat{\mathfrak{Z}}^5_m|F_3|^2 + \widehat{\mathfrak{Z}}^7_m |F_4|^2\big)  \\ 
	 +	  \iint_{\omega_0} e^{-10s\mathfrak{S}^*_m + 8s\widehat{\mathfrak{S}}_m} \widehat{\mathfrak{Z}}^{39}_m |u|^2 +   \iint_{\omega_0}e^{-10s\mathfrak{S}^*_m + 8s\widehat{\mathfrak{S}}_m} \widehat{\mathfrak{Z}}^{41}_m|w|^2\bigg].
	\end{multline}

\smallskip

For the domain $(T/2,T)\times(0,1)$, we argue as follows. Let us introduce a function $\eta\in \C^1([0,T])$ such that
\begin{equation}
\eta=0 \textnormal{ in } [0,T/4], \quad \eta=1 \textnormal{ in } [T/2,T], \quad |\eta^\prime|\leq C/T .
\end{equation}
Using \Cref{Prop-appendix-well-posed}, we apply the energy estimate to the equation verified by $(\eta \zeta, \eta \theta)$ with \\ $((\eta\zeta)(0), (\eta \theta)(0)) = (0,0)$ , so that one can deduce that
\begin{align}\notag 
&\|\eta \zeta\|^2_{L^\infty(T/4,T;L^2(0,1))} + \|\eta \theta\|^2_{L^\infty(T/4,T;L^2(0,1))} \\ 
\label{eq:est_eta_forward}
&\quad \leq C\Big(\|\eta F_3\|^2_{L^2((T/4, T)\times(0,1))} +  \|\eta F_4\|^2_{L^2((T/4, T)\times(0,1))} \\ \notag  
&  \qquad \qquad +\frac{1}{T^2}\| \zeta\|^2_{L^2((T/4,T/2)\times  (0,1))}+\frac{1}{T^2}\| \theta\|^2_{L^2((T/4,T/2)\times (0,1))}\Big) .
\end{align}
Analogously, we have for the equation verified by $(\eta u,\eta w)$ that
\begin{align}\notag 
&\|\eta u\|^2_{L^\infty(T/4,T;L^2(0,1))} + \|\eta w\|^2_{L^\infty(T/4,T;L^2(0,1))} \\ 
\label{eq:est_eta_backward}
&\quad \leq C \Big(\| \eta F_1\|^2_{L^2((T/4, T)\times (0,1))} + \|\eta F_2\|^2_{L^2((T/4, T)\times (0,1))} \\ \notag 
& \qquad \qquad +  \|\eta \theta\|^2_{L^2((T/4,T)\times\mathcal O)} +\frac{1}{T^2}\| u\|^2_{L^2((T/4,T/2)\times (0,1))}+ \frac{1}{T^2}\|w\|^2_{L^2((T/4,T/2)\times(0,1))}\Big)  .
\end{align}
Using the estimate of $\eta \theta$ from \eqref{eq:est_eta_forward}  in the right hand side of \eqref{eq:est_eta_backward} and then combining both
 \eqref{eq:est_eta_forward}--\eqref{eq:est_eta_backward} we obtain, 
\begin{align}\label{estimate-for-back}   
&\|\eta u\|^2_{L^\infty(T/4,T;L^2(0,1))} + \|\eta w\|^2_{L^\infty(T/4,T;L^2(0,1))}  + \|\eta \zeta\|^2_{L^\infty(T/4,T;L^2(0,1))} + \|\eta \theta\|^2_{L^\infty(T/4,T;L^2(0,1))} \\ \notag 
& \leq {C} \left(\sum_{j=1}^4\|\eta F_j \|^2_{L^2((T/4,T)\times (0,1))} + \|u\|^2_{L^2((T/4,T/2) \times (0,1))}+\| w\|^2_{L^2((T/4,T/2) \times(0,1))}\right) \\ \notag 
&\quad +{C}\Big(\|\zeta\|^2_{L^2((T/4,T/2) \times(0,1))}+\| \theta\|^2_{L^2((T/4,T/2) \times(0,1))}\Big) .
\end{align}
Since the weight functions $\mathfrak{S}_m$ and $\mathfrak{Z}_m$ are bounded (by below and above) in $[T/4,T]$, we have 
\begin{align}\label{aux-esti-1}
&	\int_{\frac{T}{4}}^{\frac{T}{2}} \int_0^1 \left(|u|^2 + |w|^2 + |\zeta|^2 +|\theta|^2 \right) \\ \notag 
& \leq 	\int_{\frac{T}{4}}^{\frac{T}{2}} \int_0^1 \left(e^{-2s \mathfrak{S}_m} \mathfrak{Z}^7_m |u|^2 +  e^{-2s \mathfrak{S}_m} \mathfrak{Z}^7_m |w|^2 + e^{-2s \widehat{\mathfrak{S}}_m} (\mathfrak{Z}^*)^7_m |\zeta|^2+ e^{-2s \mathfrak{S}_m} \mathfrak{Z}^9_m |\theta|^2 \right) \\ \notag 
& \leq  C \bigg[\iint e^{-2s\mathfrak{S}_m} |F_1|^2 + \iint  e^{-10s\mathfrak{S}_m + 8s\widehat{\mathfrak{S}}_m} {\mathfrak{Z}}^{37}_m |F_2|^2 + \iint e^{-2s\mathfrak{S}_m} \big(\mathfrak{Z}^5_m|F_3|^2 + \mathfrak{Z}^7_m |F_4|^2\big) \\ \notag 
& \quad +	  \iint_{\omega_0} e^{-10s\mathfrak{S}_m + 8s\widehat{\mathfrak{S}}_m} \mathfrak{Z}^{39}_m |u|^2 +   \iint_{\omega_0}e^{-10s\mathfrak{S}_m + 8s\widehat{\mathfrak{S}}_m} \mathfrak{Z}^{41}_m|w|^2\bigg] , 
\end{align}
thanks to the Carleman estimate \eqref{Carleman-second-app}.

We also can incorporate the weight functions in the left hand side of \eqref{estimate-for-back}, which yields together with \eqref{aux-esti-1}
 and from the definitions \eqref{max_min_not_0}--\eqref{max_min_not_1}, that 
\begin{multline}\label{Carleman-T-alpha=0}
	\int_{\frac{T}{2}}^T \int_0^1 \left(e^{-2s \widehat{\mathfrak{S}}_m} (\mathfrak{Z}^*)^7_m |u|^2 +  e^{-2s \widehat{\mathfrak{S}}_m} (\mathfrak{Z}^*)^7_m |w|^2 + e^{-2s \widehat{\mathfrak{S}}_m} (\mathfrak{Z}^*)^7_m |\zeta|^2+ e^{-2s \widehat{\mathfrak{S}}_m} (\mathfrak{Z}^*)^9_m |\theta|^2 \right)   \\
+ \|\eta u\|^2_{L^\infty(T/4,T;L^2(0,1))} + \|\eta w\|^2_{L^\infty(T/4,T;L^2(0,1))}  + \|\eta \zeta\|^2_{L^\infty(T/4,T;L^2(0,1))} + \|\eta \theta\|^2_{L^\infty(T/4,T;L^2(0,1))} \\ 
	 \leq  
	C \bigg[\iint e^{-2s\mathfrak{S}^*_m} |F_1|^2 + \iint  e^{-10s\mathfrak{S}^*_m + 8s\widehat{\mathfrak{S}}_m} \widehat{\mathfrak{Z}}^{37}_m |F_2|^2 + \iint e^{-2s\mathfrak{S}^*_m} \big(\widehat{\mathfrak{Z}}^5_m|F_3|^2 + \widehat{\mathfrak{Z}}^7_m |F_4|^2\big)  \\ 
	+	  \iint_{\omega_0} e^{-10s\mathfrak{S}^*_m + 8s\widehat{\mathfrak{S}}_m} \widehat{\mathfrak{Z}}^{39}_m |u|^2 +   \iint_{\omega_0}e^{-10s\mathfrak{S}^*_m + 8s\widehat{\mathfrak{S}}_m} \widehat{\mathfrak{Z}}^{41}_m|w|^2\bigg] .
\end{multline}
for some $C>0$.

Combining \eqref{aux-car-1} and \eqref{Carleman-T-alpha=0}, we have the desired result \eqref{refined_second-app}.  
\end{proof}

As a consequence of \Cref{prop:refined_a_0}, we have the following result.
\begin{proposition}[Observability inequality: the case $\alpha=0$]\label{Obser-ineq}
Let $m$, $k$, $s$ and $\lambda$ be fixed constants according to \Cref{Theorem-Carleman-second-app}. Then, there exists a positive constant $C$ depending at most on $\omega, \mathcal O$, $T$, $m$, $k$, $s$, and $\lambda$ such that for any given $(\zeta_0,\theta_0)\in [L^2(0,1)]^2$ and $F_j\in L^2(Q_T)$, $j=1,2,3,4$, the solution $(u,w,\zeta,\theta)$ to \eqref{adj_sys_a_0} satisfies
\begin{multline}\label{eq:obs_final}
\|e^{-s\widehat{\mathfrak{S}}_m} (\mathfrak{Z}^*)^{5/2-1/m} u \|^2_{L^2(Q_T)} + \|e^{-s\widehat{\mathfrak{S}}_m} (\mathfrak{Z}^*)^{5/2-1/m} w \|^2_{L^2(Q_T)} \\
+ 
\|e^{-s\widehat{\mathfrak{S}}_m} (\mathfrak{Z}^*)^{5/2-1/m} \zeta \|^2_{L^2(Q_T)} + \|e^{-s\widehat{\mathfrak{S}}_m} (\mathfrak{Z}^*)^{5/2-1/m} \theta \|^2_{L^2(Q_T)}\\
+ \int_0^1 \left(|\zeta(T,x)|^2 + |\theta(T,x)|^2 \right) 
\leq  C\bigg[\iint e^{-10s\mathfrak{S}^*_m+8s\widehat{\mathfrak{S}}_m} \widehat{\mathfrak{Z}}_m^{37} \left(|F_1|^2 + |F_2|^2+|F_3|^2 +|F_4|^2\right)  \\
+  \iint_{\omega_0} e^{-10s\mathfrak{S}^*_m + 8s\widehat{\mathfrak{S}}_m} \widehat{\mathfrak{Z}}^{39}_m |u|^2 +   \iint_{\omega_0}e^{-10s\mathfrak{S}^*_m + 8s\widehat{\mathfrak{S}}_m} \widehat{\mathfrak{Z}}^{41}_m|w|^2\bigg]  .
\end{multline}
\end{proposition}

\begin{proof}
Let us define $\rho^*(t)=e^{-s\widehat{\mathfrak{S}}_m} (\mathfrak{Z}^*)^{5/2-1/m}$ so that $\rho^*(0) =0$. 
Then, the equation of $(\zeta^*, \theta^*)=(\rho^*\zeta, \rho^*\theta)$ looks like 
\begin{align*}
	\begin{dcases}
		\zeta^*_t + \zeta^*_{xxxx} +\gamma \zeta^*_{xx} = \theta^*_x + \rho^* F_3 + \rho^*_t \zeta  & \textnormal{in } Q_T, \\
		\theta^*_t - \theta^*_{xx} + \beta \theta^*_x =  \zeta^*_x + \rho^* F_4 + \rho^*_t \theta & \textnormal{in } Q_T,\\
		\zeta^* = \zeta^*_x = \theta^* =0 & \textnormal{in } (0,T), \\
		\zeta^*(0) = \theta^*(0) =0 &\textnormal{in } (0,1),
	\end{dcases}
\end{align*}
and it satisfies the following estimate 
\begin{align}\label{esti-for-rhostar}
	\|\zeta^*\|^2_{L^\infty(0,T; L^2(0,1))} + \|\theta^*\|^2_{L^\infty(0,T; L^2(0,1))} 
	 \leq & C \Big(\|\rho^* F_3\|^2_{L^2(Q_T)} + \|\rho^* F_4\|^2_{L^2(Q_T)}  \\ \notag 
	 & + \|\rho^*_t \zeta\|^2_{L^2(Q_T)} +  \|\rho^*_t \theta\|^2_{L^2(Q_T)} \Big)
\end{align}

Similarly, the equation of $(u^*, w^*)=(\rho^* u, \rho^*w)$ is
\begin{align*}
	\begin{dcases}
		-u^*_t + u^*_{xxxx} +\gamma u^*_{xx} =  -w^*_x + \rho^*F_1 - \rho^*_t u    & \textnormal{in } Q_T, \\
	-w^*_t - w^*_{xx} - \beta w^*_x =   -u^*_x + \rho^*F_2 + \rho^*\theta \mathds{1}_{\mathcal O} - \rho^*_t w & \textnormal{in } Q_T,\\
	u^* = u^*_x = w =0 &\textnormal{in } (0,T), \\
	u^*(T) = w^*(T) =0 &\textnormal{in } (0,1),
	\end{dcases}
\end{align*} 
which satisfies 
\begin{multline}\label{esti-back-rhostar}
	\|u^*\|^2_{L^\infty(0,T; L^2(0,1))} + \|w^*\|^2_{L^\infty(0,T; L^2(0,1))} 
	\leq  C \Big(\|\rho^* F_1\|^2_{L^2(Q_T)} + \|\rho^* F_2\|^2_{L^2(Q_T)}  \\  
	+ \|\rho^*_t u\|^2_{L^2(Q_T)} +  \|\rho^*_t w\|^2_{L^2(Q_T)} +\|\rho^* \theta\|^2_{L^2(Q_T)} \Big).
\end{multline}
Now, it can be checked that $|(\widehat{\mathfrak{S}}_m)_t|\leq C (\mathfrak{Z}^*_m)^{1+1/m}$ and thus 
\begin{align*}
	|\rho^*_t| \leq C e^{-s \widehat{\mathfrak{S}}_m}  (\mathfrak{Z}^*_m)^{7/2} , \quad |\rho^*| \leq C e^{-s \mathfrak{S}^*_m} \widehat{\mathfrak{Z}}_m^{7/2}.
\end{align*}
Using this and together with \eqref{esti-for-rhostar}--\eqref{esti-back-rhostar}, we get
\begin{multline}\label{esti-back-for-rhostar}
	\|u^*\|^2_{L^\infty(0,T; L^2(0,1))} + \|w^*\|^2_{L^\infty(0,T; L^2(0,1))}  + 	\|\zeta^*\|^2_{L^\infty(0,T; L^2(0,1))} + \|\theta^*\|^2_{L^\infty(0,T; L^2(0,1))} \\
	\leq \iint \left(e^{-2s \widehat{\mathfrak{S}}_m} (\mathfrak{Z}^*_m)^7 |u|^2 +  e^{-2s \widehat{\mathfrak{S}}_m} (\mathfrak{Z}^*_m)^7 |w|^2 + e^{-2s \widehat{\mathfrak{S}}_m} (\mathfrak{Z}^*_m)^7 |\zeta|^2+ e^{-2s \widehat{\mathfrak{S}}_m} (\mathfrak{Z}^*_m)^7 |\theta|^2 \right)  \\
	+ \iint e^{-2s\mathfrak{S}^*_m} \widehat{\mathfrak{Z}}_m^7 \left(|F_1|^2 + |F_2|^2+|F_3|^2 +|F_4|^2\right)   .
\end{multline}
Then, by applying the modified Carleman estimate \eqref{refined_second-app} and using the fact that $-10s\mathfrak{S}^*_m+ 8s\widehat{\mathfrak{S}}_m\geq -2s\mathfrak{S}^*_m$ (thanks to the definitions of $\widehat{\mathfrak{S}}_m$ and $\mathfrak{S}^*_m$ given by \eqref{max_min_not_0}--\eqref{max_min_not_1}),  we get the required observability inequality  \eqref{eq:obs_final}.
\end{proof}

%
%


\subsubsection{Null-controllability of the linearized system  ($\boldsymbol{\alpha=0}$)}\label{sec-null-0}

We denote the following operators 
\begin{align}\label{op-L1}
\mathcal L_1 := \partial_t + \partial_{xxxx} + \gamma \partial_{xx} , \\
\label{op-L2}
\mathcal L_2 := \partial_t - \partial_{xx} + \beta \partial_{x},
\end{align}
and their respecting adjoint operators by
\begin{align}\label{op-L1star}
	\mathcal L^*_1 = -\partial_t + \partial_{xxxx} + \gamma \partial_{xx} , \\
	\label{op-L2star}
	\mathcal L^*_2 =- \partial_t - \partial_{xx} - \beta \partial_{x}. 
\end{align}
Also, we denote the following Banach space 
\begin{align}\label{space_E}
	\mathcal E := \Big\{ (y, z, p, q, h_1, h_2) \ | \ & e^{5s\mathfrak{S}^*_m - 4s \widehat{\mathfrak{S}}_m } \widehat{\mathfrak{Z}}^{-37/2}_m   (y, z, p, q) \in [L^2(Q_T)]^4 ,  \\ \notag 
	& e^{5s\mathfrak{S}^*_m - 4s \widehat{\mathfrak{S}}_m }\big( \widehat{\mathfrak{Z}}_m^{-39/2} h_1 \mathds{1}_\omega ,  \widehat{\mathfrak{Z}}_m^{-41/2} h_2 \mathds{1}_\omega\big) \in [L^2((0,T)\times \omega)]^2 , \\ \notag 
	& e^{5s\mathfrak{S}^*_m - 4s \widehat{\mathfrak{S}}_m } \widehat{\mathfrak{Z}}_m^{-41/2} (y, z, p, q) \in \C^0([0,T]; [L^2(0,1)]^4)  \\ \notag 
	& \qquad \cap L^2(0,T; H^2_0(0,1)\times H^{1}_0(0,1)\times H^{2}_0(0,1)\times H^{1}_0(0,1)) , \\ \notag 
	& e^{s\widehat{\mathfrak{S}}_m} (\mathfrak{Z}^*_m)^{-5/2+1/m}(\mathcal L_1 y -z_x - h_1 \mathds{1}_{\omega}) \in L^2(Q_T), \\ \notag 
      &  e^{s\widehat{\mathfrak{S}}_m} (\mathfrak{Z}^*_m)^{-5/2+1/m}(\mathcal L_2 z -y_x - h_2 \mathds{1}_{\omega}) \in L^2(Q_T), \\ \notag 
      & e^{s\widehat{\mathfrak{S}}_m} (\mathfrak{Z}^*_m)^{-5/2+1/m}(\mathcal L^*_1 p +q_x) \in L^2(Q_T), \\    \notag                          
         &  e^{s\widehat{\mathfrak{S}}_m} (\mathfrak{Z}^*_m)^{-5/2+1/m}(\mathcal L^*_2 q +p_x - z\mathds{1}_{\mathcal O} ) \in L^2(Q_T), \\ \notag 
          & \qquad p(T, \cdot)=q(T, \cdot)=0 \text{ in } (0,1) \Big\}. 
\end{align}

\begin{proposition}[Null-controllability: the case $\alpha=0$]\label{prop-null-cont-alpha=0}
	Let $m$, $k$,  $s$ and $\lambda$ be fixed constants according to \Cref{Theorem-Carleman-second-app}. Let $f_1, f_2, f_3, f_4$  be the functions satisfying 
	\begin{align}\label{condition-f_1-f_2} 
		e^{s\widehat{\mathfrak{S}}_m} (\mathfrak{Z}^*_m)^{-5/2+1/m} (f_1, f_2, f_3,f_4) \in [L^2(Q_T)]^4  .
	\end{align} 
Then, there exists controls $(h_1, h_2)$ and a solution $(y, z, p, q)$ to \eqref{sys_linear-1}--\eqref{sys_linear-2} (when $\alpha=0$) such that we have 
$p(0)=q(0)=0$ in $(0,1)$. 
	\end{proposition}
\begin{proof}
   We consider the following space 
   \begin{align*}
   	\mathcal Q_0 : = \Big\{(u,w, \zeta, \theta) \in \C^\infty(\overline{Q_T}) \ | \ u=u_x =w=\zeta=\zeta_x = \theta=0 \text{ on } \Sigma_T \Big\},
  \end{align*}	
and define the  bi-linear operator $\mathcal K: \mathcal Q_0 \times \mathcal Q_0 \to \mathbb R$ given by
\begin{align}\label{bi-linear-form}
	&\mathcal K ( (u, w, \zeta, \theta), (\underline u, \underline w, \underline \zeta, \underline \theta  )  )  \\ \notag 
	:&= \iint e^{-10s\mathfrak{S}^*_m+8s\widehat{\mathfrak{S}}_m} \widehat{\mathfrak{Z}}_m^{37} \Big[ (\mathcal L^*_1 u+w_x)(\mathcal L^*_1 \underline u + \underline{w}_x)    + 
	 (\mathcal L^*_2 w +u_x+\theta \mathds{1}_{\mathcal O})(\mathcal L^*_2 \underline w + \underline{u}_x + \underline{\theta} \mathds{1}_{\mathcal O} ) \\ \notag 
	 &\qquad  \qquad \qquad \ \ \ ~~~~~
	 + (\mathcal L_1 \zeta -\theta_x)(\mathcal L_1 \underline \zeta-\underline{\theta}_x ) +  (\mathcal L_2\theta -\zeta_x)(\mathcal L_2 \underline \theta-\underline{\zeta}_x )   \Big] \\ \notag 
	 & \quad + \int_0^T \int_{\omega} e^{-10 s \mathfrak{S}^*_m + 8s\widehat{\mathfrak{S}}_m} \big( \widehat{\mathfrak{Z}}_m^{39} u \underline{u} + \widehat{\mathfrak{Z}}_m^{41} w \underline{w} \big) ,
	\end{align}
as well as the  linear operator $l:\mathcal Q_0 \to \mathbb R$ given by
\begin{align*}
l((u,w,\zeta,\theta))
	: = \langle f_1, u\rangle_{L^2(Q_T)} + \langle f_2, w\rangle_{L^2(Q_T)}
	  + \langle f_3, \zeta\rangle_{L^2(Q_T)} + \langle f_4, \theta\rangle_{L^2(Q_T)}. 
\end{align*}

It is clear that the product \eqref{bi-linear-form} is defines an inner product since the observability inequality \eqref{eq:obs_final} holds. 
We denote by $\mathcal Q$, the closure of $\mathcal Q_0$ w.r.t. the norm $\mathcal K(\cdot,\cdot)^{1/2}$ and indeed it is an Hilbert space endowed with the inner product \eqref{bi-linear-form}. The linear functional $l$ is also bounded due to \eqref{eq:obs_final} and the hypothesis \eqref{condition-f_1-f_2}. Therefore, the Lax-Milgram's theorem ensures the existence of unique $(\widehat u, \widehat w, \widehat \zeta, \widehat \theta)\in \mathcal Q\times \mathcal Q$ satisfying 
\begin{align*}
	\mathcal K ( (\widehat u, \widehat w, \widehat \zeta, \widehat \theta), (\underline u, \underline w, \underline \zeta, \underline \theta  )  ) = l(  (\underline u, \underline w, \underline \zeta, \underline \theta  )  ) , \quad \forall   (\underline u, \underline w, \underline \zeta, \underline \theta  ) \in \mathcal Q. 
\end{align*}

Now, we set 
\begin{align}\label{solution-controlled-linear-for}
	&\widehat y = e^{-10s\mathfrak{S}^*_m+8s\widehat{\mathfrak{S}}_m} \widehat{\mathfrak{Z}}_m^{37}(\mathcal L^*_1 \widehat u + \widehat{w}_x ), \ \ \ 	\widehat z = e^{-10s\mathfrak{S}^*_m+8s\widehat{\mathfrak{S}}_m} \widehat{\mathfrak{Z}}_m^{37} (\mathcal L^*_2 \widehat w + \widehat{u}_x + \widehat \theta \mathds{1}_\mathcal O ) , \\
	\label{solution-controlled-linear-back}
	&	\widehat p = e^{-10s\mathfrak{S}^*_m+8s\widehat{\mathfrak{S}}_m} \widehat{\mathfrak{Z}}_m^{37} (\mathcal L_1 \widehat \zeta - \widehat{\theta}_x ), \ \ \ \	\widehat q = e^{-10s\mathfrak{S}^*_m+8s\widehat{\mathfrak{S}}_m} \widehat{\mathfrak{Z}}_m^{37} (\mathcal L_2 \widehat \theta - \widehat{\zeta}_x) ,
\end{align}
and 
\begin{align*}
	\widehat h_1 = e^{-10 s \mathfrak{S}^*_m + 8s\widehat{\mathfrak{S}}_m} \widehat{\mathfrak{Z}}^{39}_m \widehat u \mathds{1}_\omega , \quad  	\widehat h_2 = e^{-10 s \mathfrak{S}^*_m + 8s\widehat{\mathfrak{S}}_m} \widehat{\mathfrak{Z}}^{41}_m \widehat w \mathds{1}_\omega .
\end{align*}
Then, thanks to the observability inequality \eqref{eq:obs_final}, we have 
\begin{multline}\label{bound_solution_control}
	\iint e^{10s\mathfrak{S}^*_m - 8s\widehat{\mathfrak{S}}_m} \widehat{\mathfrak{Z}}_m^{-37} \left( |\widehat y|^2 + |\widehat z|^2 + |\widehat p|^2 +|\widehat q|^2 \right) \\
	+ \int_0^T \int_\omega  e^{10 s \mathfrak{S}^*_m - 8s\widehat{\mathfrak{S}}_m}\left( \widehat{\mathfrak{Z}}^{-39}_m |\widehat h_1|^2 + \widehat{\mathfrak{Z}}^{-41}_m |\widehat h_2|^2 \right) <+\infty  ,
\end{multline}
and this $(\widehat y, \widehat z, \widehat p, \widehat q)$ is unique solution to the linearized system \eqref{sys_linear-1}--\eqref{sys_linear-2} (with $\alpha=0$) in the sense of transposition with the control functions $\widehat h_1$ and $\widehat h_2$.  Moreover, from \eqref{solution-controlled-linear-back} it is clear that 
$$\widehat p(0)=0, \quad \widehat q(0)=0 \quad \text{in } (0,1).$$

\smallskip 

We further set 
\begin{align*}
	(y^*, z^*, p^*, q^*) = e^{5 s \mathfrak{S}^*_m - 4s\widehat{\mathfrak{S}}_m}\widehat{\mathfrak{Z}}^{-41/2}_m (\widehat y, \widehat z, \widehat p, \widehat q) ,
\end{align*} 
which satisfies 
\begin{align}\label{equation-star}
	\begin{dcases}
		\mathcal L_1 y^* - z^*_x =  
		 h^*_1 \mathds{1}_{\omega}  +  f^*_1 +  \big(e^{5 s \mathfrak{S}^*_m - 4s\widehat{\mathfrak{S}}_m}\widehat{\mathfrak{Z}}^{-41/2}_m\big)_t \widehat y  &\text{in } Q_T, 
		 \\
		\mathcal L_2 z^* - y^*_x =  
		 h^*_2 \mathds{1}_{\omega}  +  f^*_2 +  \big(e^{5 s \mathfrak{S}^*_m - 4s\widehat{\mathfrak{S}}_m}\widehat{\mathfrak{Z}}^{-41/2}_m\big)_t \widehat z  &\text{in } Q_T ,
		  \\
		\mathcal L^*_1 p^* + q^*_x = f^*_3   - \big(e^{5 s \mathfrak{S}^*_m - 4s\widehat{\mathfrak{S}}_m}\widehat{\mathfrak{Z}}^{-41/2}_m\big)_t \widehat p  &\text{in } Q_T , 
		\\
		\mathcal L^*_2 q^* + p^*_x = f^*_3 + z^*\mathds{1}_{\mathcal O} -   \big(e^{5 s \mathfrak{S}^*_m - 4s\widehat{\mathfrak{S}}_m}\widehat{\mathfrak{Z}}^{-41/2}_m\big)_t \widehat q   &\text{in } Q_T ,\\
		y^*=y^*_x = z^* =p^*=p^*_x=q^*=0  &\text{in } \Sigma_T, \\
		y^*(0)=z^*(0) = p^*(T)=q^*(T)=0 &\text{in } (0,1),
	\end{dcases}
\end{align}
where $h^*_i= e^{5 s \mathfrak{S}^*_m - 4s\widehat{\mathfrak{S}}_m}\widehat{\mathfrak{Z}}^{-41/2}_m \widehat h_i$ for $i=1,2$ and $f^*_j=e^{5 s \mathfrak{S}^*_m - 4s\widehat{\mathfrak{S}}_m}\widehat{\mathfrak{Z}}^{-41/2}_m f_j$ for $j=1,2,3,4$.

We first observe that 
\begin{align*}
e^{5 s \mathfrak{S}^*_m - 4s\widehat{\mathfrak{S}}_m} \widehat{\mathfrak{Z}}^{-41/2}_m \leq C T^{36m +1} e^{s\widehat{\mathfrak{S}}_m} (\mathfrak{Z}^*_m)^{-5/2+1/m}  
\end{align*}
since $e^{5 s \mathfrak{S}^*_m - 4s\widehat{\mathfrak{S}}_m}\leq e^{s\mathfrak{S}^*_m}$ and $\mathfrak{Z}_m^{-1}\leq CT^{2m}$.  

On the other hand,
 note that $|(\widehat{\mathfrak{S}}_m)_t|\leq C (\widehat{\mathfrak{Z}}_m)^{1+1/m}$ (also, $|({\mathfrak{S}^*}_m)_t|\leq C (\widehat{\mathfrak{Z}}_m)^{1+1/m}$), which yields 
\begin{align*} 
\big|\big(e^{5 s \mathfrak{S}^*_m - 4s\widehat{\mathfrak{S}}_m}\widehat{\mathfrak{Z}}^{-41/2}_m\big)_t\big|
\leq  C
e^{5 s \mathfrak{S}^*_m - 4s\widehat{\mathfrak{S}}_m} \widehat{\mathfrak{Z}}^{-39/2+1/m}_m \leq CT^{2m-1}e^{5 s \mathfrak{S}^*_m - 4s\widehat{\mathfrak{S}}_m} \widehat{\mathfrak{Z}}^{-37/2}_m   ,
\end{align*}


Using the above facts, 
  the conditions on $f_j$ given  by \eqref{condition-f_1-f_2} and the bound \eqref{bound_solution_control},   we can conclude that the right hand sides of the four pdes in \eqref{equation-star} belong to $L^2(Q_T)$. Therefore, 
 \begin{align*}
 	(y^*, z^*, p^*, q^*) \in \C^0([0,T]; [L^2(0,1)]^4) \cap L^2(0,T; H^2_0(0,1) \times H^1_0(0,1) \times H^2_0(0,1) \times H^1_0(0,1)) . 
 \end{align*}
Moreover, 
\begin{equation}
\begin{aligned}\label{bound-solution-star}
	\|y^*\|_{\C^0(L^2) \cap L^2(H^2_0)} + 	\|z^*\|_{\C^0(L^2) \cap L^2(H^1_0)} + 	\|p^*\|_{\C^0(L^2) \cap L^2(H^2_0)} + 	\|q^*\|_{\C^0(L^2) \cap L^2(H^1_0)}
	\leq C,
\end{aligned}
\end{equation}
 for some $C>0$, thanks to the assumption \eqref{condition-f_1-f_2} and the fact \eqref{bound_solution_control}. 
 
Thus, the functions $(\widehat y, \widehat z, \widehat p, \widehat q, \widehat h_1, \widehat h_2)\in \mathcal E$ defined in \eqref{space_E}. The proof is complete.
\end{proof}

\subsubsection{Local null-controllability of the nonlinear system  ($\boldsymbol{\alpha=0}$)}\label{section-locall-null-alpha=0}

In this section, we shall prove the main theorem of our paper for in the case when $\alpha=0$, that is \Cref{thm:main}. But as we mentioned in the beginning, this is equivalent to prove the local null-controllability of the extended system \eqref{sys_equiv_1}--\eqref{sys_equiv_2}, which is precisely \Cref{thm:main_extended}.

\smallskip 
To prove the local controllability result, we use  the following well-known theorem. 

\begin{theorem}[Inverse mapping theorem]\label{thm-inverse}
	Let $\B_1$, $\B_2$ be two Banach spaces and $\Y: \B_1 \to \B_2$ satisfying $\Y\in \C^1(\B_1; \B_2)$. Assume that $b_1\in \B_1$ and $\Y(b_1)=b_2\in \B_2$ and $\Y^\prime(b_1):\B_1\to \B_2$ is surjective. Then, there exists $\delta>0$ such that for every $\widetilde b\in \B_2$ satisfying $\|\widetilde b - b_2\|_{\B_2}<\delta$, there exists a solution of the equation
	$$\Y(b)=\widetilde b, \quad b\in \B_1. $$
\end{theorem}

\medskip

\begin{proof}[\bf Proof of \Cref{thm:main_extended}]  We now prove the main result of this work. The idea is to apply the above theorem with 
	\begin{align*}
		\B_1 = \mathcal E , 
	\end{align*}
\begin{align*}
	\B_2 = \mathcal F \times \mathcal F \times \mathcal F \times \mathcal F,
\end{align*}
	where 
	\begin{align}
		\mathcal F :=  \left\{ \xi  \mid e^{s \widehat{\mathfrak{S}}_m} (\mathfrak{Z}_m^*)^{-5/2+1/m} \xi \in L^2(Q_T)       \right\},
	\end{align} 
	and consider   $\Y:\B_1 \to \B_2$ such that
	\begin{align}
	&	\Y (y,z,p,q,h_1,h_2) \\  \notag 
	= &( \mathcal L_1 y - z_x - h_1 \mathds{1}_\omega + yy_x , \mathcal L_2  z  - y_x - h_2 \mathds{1}_\omega , \mathcal L_1^* p + q_x  -yp_x , \mathcal L_2^* q + p_x -z \mathds{1}_{\mathcal O}) .
	\end{align}
	
	\begin{itemize} 
\item 	Let us first check that $\Y \in \C^1(\B_1, \B_2)$. In fact, all the terms of $\Y$ are well-defined and linear except the terms $yy_x$ and $-yp_x$. Thus, it is enough to show that the map 
\begin{align*}
	\big((y,z,p,q, h_1,h_2), (\widetilde y, \widetilde z, \widetilde p, \widetilde q , \widetilde h_1, \widetilde h_2) \big) \longmapsto \big(y \widetilde y_x,  -y \widetilde p_x \big) 
\end{align*}
	from $\B_1\times \B_1 \to \mathcal F \times \mathcal F$ is continuous. Before that, we observe that
	\begin{align}\label{ineq-weight} 
	e^{s \widehat{\mathfrak{S}}_m} (\mathfrak{Z}_m^*)^{-5/2+1/m} \leq C e^{2(5s \mathfrak{S}^*_m - 4s \widehat{\mathfrak{S}}_m )}  \widehat{\mathfrak{Z}}^{-41}_m,
	\end{align}
which can be seen as follows: there exists some $d_0>0$ such that
	\begin{align*}
		2(5s \mathfrak{S}^*_m - 4s \widehat{\mathfrak{S}}_m) - s \widehat {\mathfrak{S}}_m
		& = \frac{s}{\ell(t)^m} \left[e^{\lambda(1+\frac{1}{m})k\|\nu\|_{\infty}} - 10 e^{\lambda(k\|\nu\|_{\infty}+\|\nu\|_{\infty})} + 9 e^{\lambda k\|\nu\|_{\infty}} \right]\\
		& =\frac{se^{\lambda k\|\nu\|_{\infty}}}{\ell(t)^m} \left[ e^{\lambda \frac{k}{m}\|\nu\|_{\infty}} -10 e^{\lambda \|\nu\|_{\infty}} + 9 \right]  \\
		&\geq  \frac{d_0 se^{\lambda k\|\nu\|_{\infty}}}{\ell(t)^m},
		\end{align*}
for chosen $k>m$ large enough. In other words,
\begin{align*}
	s\widehat{\mathfrak{S}}_m \leq 2(5s \mathfrak{S}^*_m - 4s \widehat{\mathfrak{S}}_m ) - \frac{d_0 se^{\lambda k\|\nu\|_{\infty}} }{\ell(t)^m} ,
\end{align*} 
which yields
\begin{align*}
	e^{s\widehat{\mathfrak{S}}_m} (\mathfrak{Z}^*_m)^{-5/2+1/m} \leq e^{2(5s \mathfrak{S}^*_m - 4s \widehat{\mathfrak{S}}_m )}  \widehat{\mathfrak{Z}}^{-41}_m \times e^{-\frac{s d_0 e^{\lambda k\|\nu\|_{\infty}} }{\ell(t)^m}} \widehat{\mathfrak{Z}}^{41}_m (\mathfrak{Z}^*_m)^{-5/2+1/m},
\end{align*}
and that \eqref{ineq-weight} follows.

	Now, using \eqref{ineq-weight}, we have 
	\begin{equation} \label{esti-continuity}
	\begin{aligned}
		&\left\|e^{s \widehat{\mathfrak{S}}_m} (\mathfrak{Z}_m^*)^{-5/2+1/m} \big(y \widetilde y_x,  -y \widetilde p_x \big) \right\|_{[L^2(Q_T)]^2}  \\
		\leq & C\left\|e^{2(5s \mathfrak{S}^*_m - 4s \widehat{\mathfrak{S}}_m )}  \widehat{\mathfrak{Z}}^{-41}_m y \widetilde y_x\right\|_{L^2(Q_T)} 
		+ 
	C	\left\|e^{2(5s \mathfrak{S}^*_m - 4s \widehat{\mathfrak{S}}_m )}  \widehat{\mathfrak{Z}}^{-41}_m y \widetilde p_x\right\|_{L^2(Q_T)}\\
		\leq & C\left\|e^{(5s \mathfrak{S}^*_m - 4s \widehat{\mathfrak{S}}_m )}  \widehat{\mathfrak{Z}}^{-41/2}_m y\right\|_{L^\infty(0,T; L^2(0,1))} \left\|e^{(5s \mathfrak{S}^*_m - 4s \widehat{\mathfrak{S}}_m )}  \widehat{\mathfrak{Z}}^{-41/2}_m \widetilde y\right\|_{L^2(0,T; H^2(0,1))} \\
		& \quad + C\left\|e^{(5s \mathfrak{S}^*_m - 4s \widehat{\mathfrak{S}}_m )}  \widehat{\mathfrak{Z}}^{-41/2}_m y\right\|_{L^\infty(0,T; L^2(0,1))} \left\|e^{(5s \mathfrak{S}^*_m - 4s \widehat{\mathfrak{S}}_m )}  \widehat{\mathfrak{Z}}^{-41/2}_m \widetilde p\right\|_{L^2(0,T; H^2(0,1))} \\
		\leq 
		& C\big\|(y,z,p,q,h_1,h_2)\big\|_{\B_1} \big\|(\widetilde y, \widetilde z, \widetilde p, \widetilde q, \widetilde h_1, \widetilde h_2)\big\|_{\B_1}   ,
	\end{aligned}
	\end{equation} 
Therefore, the claim $\Y\in \C^1(\B_1, \B_2)$ is established. 
\begin{remark} 
	The estimate \eqref{esti-continuity} 
 can be obtained as follows: 
	\begin{equation*}
		\begin{aligned}
			&\int_0^T \int_0^1 e^{4(5s \mathfrak{S}^*_m - 4s \widehat{\mathfrak{S}}_m )}  \widehat{\mathfrak{Z}}^{-41\times 2}_m |y \widetilde y_x|^2 \\
			& \leq \int_0^T  e^{4(5s \mathfrak{S}^*_m - 4s \widehat{\mathfrak{S}}_m )}  \widehat{\mathfrak{Z}}^{-41\times 2}_m \|y\|^2_{L^2(0,1)}\|\widetilde y_x \|^2_{L^\infty(0,1)} \\
			& \leq \int_0^T  e^{4(5s \mathfrak{S}^*_m - 4s \widehat{\mathfrak{S}}_m )}  \widehat{\mathfrak{Z}}^{-41\times 2}_m \|y\|^2_{L^2(0,1)}\|\widetilde y_x \|^2_{H^1_0(0,1)} \\ 
			&\leq \left\|e^{(5s \mathfrak{S}^*_m - 4s \widehat{\mathfrak{S}}_m )}  \widehat{\mathfrak{Z}}^{-41/2}_m y   \right\|^2_{L^\infty(0,T; L^2(0,1))}  \left\|e^{(5s \mathfrak{S}^*_m - 4s \widehat{\mathfrak{S}}_m )}  \widehat{\mathfrak{Z}}^{-41/2}_m \widetilde y \|  \right\|^2_{L^2(0,T; H^2_0(0,1))}.
			\end{aligned}
	\end{equation*}
A similar estimate can be found for the term associated to $y\widetilde p_x$.
\end{remark}

\medskip 

\item Next, we show that $\Y^\prime(0,0,0,0,0)$ is surjective. In fact, we have $\Y(0,0,0,0,0)=(0,0,0,0)$ and $\Y^\prime(0,0,0,0,0): \B_1 \to \B_2$  given by 
\begin{align*}
&\Y^\prime(0,0,0,0,0)(y,z,p,q,h_1,h_2) \\
=& ( \mathcal L_1 y - z_x - h_1 \mathds{1}_\omega, \mathcal L_2  z  - y_x - h_2 \mathds{1}_\omega , \mathcal L_1^* p + q_x, \mathcal L_2^* q + p_x -z \mathds{1}_{\mathcal O}) 
\end{align*}
is surjective, thanks to the null-controllability result \Cref{prop-null-cont-alpha=0}. 

Finally, we consider   $b_1=(0,0,0,0,0)$,  $b_2 =(0,0,0,0)$ and
$\widetilde b=(\xi_1, \xi_2,0,0)\in \B_2$, where $(\xi_1,\xi_2)$ is the given external source term in \eqref{system-main} or in \eqref{sys_equiv_1}--\eqref{sys_equiv_2}.  Then, according to \Cref{thm-inverse},   
 there exists a $\delta>0$ verifying 
\begin{align*}
	\left\|(\xi_1, \xi_2,0,0)\right\|_{\B_2} < \delta ,
\end{align*} 
we have the existence of solution-control pair  $(y,z,p,q,h_1,h_2)\in \B_1=\mathcal E$ to the system \eqref{sys_equiv_1}--\eqref{sys_equiv_2}. In particular, $p(0)=q(0)=0$ in $(0,1)$. This completes the proof of \Cref{thm:main_extended} which implies the proof for \Cref{thm:main}. 
	\end{itemize}
	\end{proof}

\begin{remark}\label{remark-nonlinear}
	Observe that, we have chosen $L^2(Q_T)$ right hand sides in the equations \eqref{sys_linear-1}--\eqref{sys_linear-2} and initial data $(y_0,z_0)\in [L^2(0,1)]^2$ to handle the controllability for the nonlinear system \eqref{sys_equiv_1}--\eqref{sys_equiv_2}. 
	In several previous works  regarding the controllability study of stabilized KS systems, for instance \cite{Cerpa-Mercado-Pazoto, Cerpa_Careno}, the authors considered $L^1(0,T; W^{-1,1}(0,1))$-- and $L^2(0,T;H^{-1}(0,1))$--source terms (for the KS  and heat equations resp.) and $H^{-2}(0,1)\times H^{-1}(0,1)$ initial data. In those cases, the only nonlinear term is $yy_x$ and this can be formally handled like 
	\begin{align*}
		yy_x \in L^1(0,T; W^{-1,1}(0,1)) \Longleftrightarrow |y|^2 \in L^1(0,T; L^1(0,1)),
	\end{align*}
	but the same technique cannot be applied to the term like $``yp_x"$ appearing in our $4\times 4$ coupled system. As a matter of fact, it is hard to obtain 
	a precise estimate like \eqref{esti-continuity} if we start with data weaker than $L^2$. 
\end{remark}

\subsection{The case when $\boldsymbol{\alpha=1}$}\label{section-alpha=1}

\subsubsection{Observability inequality ($\boldsymbol{\alpha=1}$)}
\label{sec-obs-1}

We recall the carleman estimate \eqref{Carleman-first-approach} in \Cref{theorem-carleman} for $\alpha=1$ and   prescribe a modified carleman estimate from  it.

\begin{proposition}[A refined Carleman estimate: the case $\alpha=1$]\label{prop:refined_a_1}
	Let $m$, $k$,  $s$ and $\lambda$ be fixed constants according to \Cref{theorem-carleman}. Then, there exists a positive constant $C$ depending at most on $\omega, \mathcal O$, $T$, $m$, $k$, $s$, and $\lambda$ such that
		\begin{multline}\label{refined_alpha-1} 
			\iint \left(e^{-2s \widehat{\mathfrak{S}}_m} (\mathfrak{Z}^*_m)^7 |u|^2 +  e^{-2s \widehat{\mathfrak{S}}_m} (\mathfrak{Z}^*_m)^3 |w|^2 + e^{-2s \widehat{\mathfrak{S}}_m} (\mathfrak{Z}^*_m)^7 |\zeta|^2+ e^{-2s \widehat{\mathfrak{S}}_m} (\mathfrak{Z}^*_m)^3 |\theta_x|^2 \right)   \\
	+	 \|\zeta(T)\|^2_{L^2(0,1)} + \| \theta(T)\|^2_{L^2(0,1)} 
			\leq  
			C \bigg[\iint e^{-2s\mathfrak{S}^*_m} \left(\widehat{\mathfrak{Z}}_m^{71}|F_1|^2 + |F_2|^2   + \widehat{\mathfrak{Z}}^3_m|F_3|^2\right) \\
		+\iint e^{-2s\mathfrak{S}^*_m}  \widehat{\mathfrak{Z}}^2_m \left(|F_4|^2 + |F_{4,x}|^2  \right) 
		+	  \iint_{\omega_0} e^{-2s\mathfrak{S}^*_m} 
		\widehat{\mathfrak{Z}}^{79}_m |u|^2 +   \iint_{\omega_0}e^{-2s\mathfrak{S}^*_m}  \widehat{\mathfrak{Z}}^{73}_m|w|^2\bigg] ,
		\end{multline}
	where $(u,w,\zeta,\theta)$ is the solution associated to \eqref{adj_sys_a_1}, for  given data $(\zeta_0,\theta_0)\in [L^2(0,1)]^2$ and  $F_j\in L^2(Q_T)$, $j=1,2,3,4$  such that 
	$\iint e^{-2s\mathfrak{S}^*_m} \big(\widehat{\mathfrak{Z}}_m^{71}|F_1|^2 + |F_2|^2 + \widehat{\mathfrak{Z}}_m^{3}|F_3|^2 + \widehat{\mathfrak{Z}}_m^{2}|F_4|^2 + \widehat{\mathfrak{Z}}_m^{2}|F_{4,x}|^2\big) < +\infty$.  
\end{proposition}

\begin{proof} 
	
	We apply similar technique as used to prove \Cref{prop:refined_a_0}. 
	
	In the beginning, we fix the Carleman parameters $s=s_0$ and $\lambda=\lambda_0$ in the estimate \eqref{Carleman-first-approach} given by  \Cref{theorem-carleman},
and recall that $\varphi_m=\mathfrak{S}_m$ and $\xi_m=\mathfrak{Z}_m$ in $(0,T/2)\times(0,1)$,  hence
	\begin{align*}
		&\int_0^{\frac{T}{2}}\int_0^1\left(e^{-2s \mathfrak{S}_m} \mathfrak{Z}^7_m |u|^2 +  e^{-2s \mathfrak{S}_m} \mathfrak{Z}^3_m |w|^2 + e^{-2s {\mathfrak{S}}_m} \mathfrak{Z}^7_m |\zeta|^2+ e^{-2s \mathfrak{S}_m} \mathfrak{Z}^3_m |\theta_x|^2 \right) \\
		& =\int_0^{\frac{T}{2}}\int_0^1\left(e^{-2s \vphi_m} \xi^7_m |u|^2 +  e^{-2s \vphi_m} \xi^3_m |w|^2 + e^{-2s {\vphi}_m} \xi^7_m |\zeta|^2+ e^{-2s \vphi_m} \xi^3_m |\theta_x|^2 \right) .
	\end{align*}
	Therefore, using the Carleman estimate  \eqref{Carleman-first-approach}, 
	and the definitions \eqref{max_min_not_0}--\eqref{max_min_not_1},  we readily get
\begin{multline}\label{aux-car-alpha=1}
		\int_0^{\frac{T}{2}} \int_0^1 \left(e^{-2s \widehat{\mathfrak{S}}_m} (\mathfrak{Z}^*_m)^7 |u|^2 +  e^{-2s \widehat{\mathfrak{S}}_m} (\mathfrak{Z}^*_m)^3 |w|^2 + e^{-2s \widehat{\mathfrak{S}}_m} (\mathfrak{Z}^*_m)^7 |\zeta|^2+ e^{-2s \widehat{\mathfrak{S}}_m} (\mathfrak{Z}^*_m)^3 |\theta_x|^2 \right) \\
		\leq  
		C \iint e^{-2s\mathfrak{S}^*_m} \left(\widehat{\mathfrak{Z}}_m^{71}|F_1|^2 + |F_2|^2   + \widehat{\mathfrak{Z}}^3_m|F_3|^2 + \widehat{\mathfrak{Z}}^2_m |F_4|^2 + \widehat{\mathfrak{Z}}^2_m |F_{4,x}|^2  \right) 
	\\
		+	  \iint_{\omega_0} e^{-2s\mathfrak{S}^*_m} 
		 \widehat{\mathfrak{Z}}^{79}_m |u|^2 +   \iint_{\omega_0}e^{-2s\mathfrak{S}^*_m}  \widehat{\mathfrak{Z}}^{73}_m|w|^2\bigg].	
\end{multline}
%
%

We now recall the cut-off function $\eta$ given by 
	\begin{equation*}
		\eta=0 \textnormal{ in } [0,T/4], \quad \eta=1 \textnormal{ in } [T/2,T], \quad |\eta^\prime|\leq C/T .
	\end{equation*}
Then, as we obtained the energy estimate \eqref{estimate-for-back} (for the case $\alpha=0$), we have the following estimate for the equation satisfied by $(\eta u, \eta w, \eta \zeta, \eta \theta)$ with $((\eta\zeta)(0), (\eta \theta)(0))=(0,0)$, 
 %
%
%
%
%
		\begin{equation} 
		\begin{aligned}\label{estimate-for-back-alpha=1}   
			&\|\eta u\|^2_{L^\infty(T/4,T;L^2(0,1))} + \|\eta w\|^2_{L^\infty(T/4,T;L^2(0,1))}  + \|\eta \zeta\|^2_{L^\infty(T/4,T;L^2(0,1))}  \\
	& \quad  + \|\eta \theta\|^2_{L^\infty(T/4,T;L^2(0,1))} + \|\eta \theta\|^2_{L^2(T/4,T; H^1_0(0,1))}  \\ 
			 \leq & C \bigg(\sum_{j=1}^4\|\eta F_j \|^2_{L^2((T/4,T)\times (0,1))} + \|u\|^2_{L^2((T/4,T/2) \times (0,1))}+\| w\|^2_{L^2((T/4,T/2) \times(0,1))}  \\ 
			&\qquad  \quad +\|\zeta\|^2_{L^2((T/4,T/2) \times(0,1))}+\| \theta\|^2_{L^2((T/4,T/2) \times(0,1))}\bigg)  \\ 
			 \leq & C \bigg(\sum_{j=1}^4\|\eta F_j \|^2_{L^2((T/4,T)\times (0,1))} + \|u\|^2_{L^2((T/4,T/2) \times (0,1))}+\| w\|^2_{L^2((T/4,T/2) \times(0,1))}  \\ 
				&\qquad  \quad +\|\zeta\|^2_{L^2((T/4,T/2) \times(0,1))}+\| \theta_x\|^2_{L^2((T/4,T/2) \times(0,1))}\bigg), 
		\end{aligned}
	\end{equation}
using the Poincar\'e inequality for $\theta$.  
Now, 	since the weight functions $\mathfrak{S}_m$ and $\mathfrak{Z}_m$ are bounded (by below and above) in $[T/4,T]$, see \eqref{weight_function_not_0},  we have 
		\begin{align}\label{aux-esti-alpha=1}
			&	\int_{\frac{T}{4}}^{\frac{T}{2}} \int_0^1 \left(|u|^2 + |w|^2 + |\zeta|^2 +|\theta_x|^2 \right) \\ \notag 
			& \leq 	\int_{\frac{T}{4}}^{\frac{T}{2}} \int_0^1 \left(e^{-2s \mathfrak{S}_m} \mathfrak{Z}^7_m |u|^2 +  e^{-2s \mathfrak{S}_m} \mathfrak{Z}^3_m |w|^2 + e^{-2s {\mathfrak{S}}_m} \mathfrak{Z}^7_m |\zeta|^2+ e^{-2s \mathfrak{S}_m} \mathfrak{Z}^3_m |\theta_x|^2 \right) \\ \notag 
			& \leq  	C \iint e^{-2s\mathfrak{S}^*_m} \left(\widehat{\mathfrak{Z}}_m^{71}|F_1|^2 + |F_2|^2   + \widehat{\mathfrak{Z}}^3_m|F_3|^2 + \widehat{\mathfrak{Z}}^2_m |F_4|^2 + \widehat{\mathfrak{Z}}^2_m |F_{4,x}|^2  \right) 
			\\ \notag
		&	+	  \iint_{\omega_0} e^{-2s\mathfrak{S}^*_m} 
			\widehat{\mathfrak{Z}}^{79}_m |u|^2 +   \iint_{\omega_0}e^{-2s\mathfrak{S}^*_m}  \widehat{\mathfrak{Z}}^{73}_m|w|^2\bigg].	
		\end{align}
		thanks to the Carleman estimate \eqref{Carleman-first-approach} and the definitions \eqref{max_min_not_0}--\eqref{max_min_not_1}.

	We also can incorporate the weight functions in the left hand side of \eqref{estimate-for-back-alpha=1}, which yields together with \eqref{aux-esti-alpha=1}
	and from the definitions \eqref{max_min_not_0}--\eqref{max_min_not_1}, that 
		\begin{multline}\label{Carleman-T-alpha=1}
			\int_{\frac{T}{2}}^T \int_0^1 \left(e^{-2s \widehat{\mathfrak{S}}_m} (\mathfrak{Z}^*_m)^7 |u|^2 +  e^{-2s \widehat{\mathfrak{S}}_m} (\mathfrak{Z}^*_m)^3 |w|^2 + e^{-2s \widehat{\mathfrak{S}}_m} (\mathfrak{Z}^*_m)^7 |\zeta|^2+ e^{-2s \widehat{\mathfrak{S}}_m} (\mathfrak{Z}^*_m)^3|\theta_x|^2 \right)   \\
			+ \|\eta u\|^2_{L^\infty(T/4,T;L^2(0,1))} + \|\eta w\|^2_{L^\infty(T/4,T;L^2(0,1))}  + \|\eta \zeta\|^2_{L^\infty(T/4,T;L^2(0,1))} + \|\eta \theta\|^2_{L^\infty(T/4,T;L^2(0,1))} \\ 
			\leq  
			C \bigg[\iint e^{-2s\mathfrak{S}^*_m} \left(\widehat{\mathfrak{Z}}_m^{71}|F_1|^2 + |F_2|^2   + \widehat{\mathfrak{Z}}^3_m|F_3|^2 + \widehat{\mathfrak{Z}}^2_m |F_4|^2 + \widehat{\mathfrak{Z}}^2_m |F_{4,x}|^2  \right) 
		\\
		+	  \iint_{\omega_0} e^{-2s\mathfrak{S}^*_m} 
		\widehat{\mathfrak{Z}}^{79}_m |u|^2 +   \iint_{\omega_0}e^{-2s\mathfrak{S}^*_m}  \widehat{\mathfrak{Z}}^{73}_m|w|^2\bigg].	
		\end{multline}
		for some $C>0$.
	
	Combining \eqref{aux-car-alpha=1} and \eqref{Carleman-T-alpha=1}, we have the desired result \eqref{refined_alpha-1}.  
\end{proof} 

\smallskip 
Let us find the observability inequality in this case.

\begin{proposition}[Observability inequality: the case $\alpha=1$]\label{Obser-ineq-alpha=1}
	Let $m$, $k$, $s$ and $\lambda$ be fixed constants according to \Cref{theorem-carleman}. Then, there exists a positive constant $C$ depending at most on $\omega, \mathcal O$, $T$, $m$, $k$, $s$, and $\lambda$ such that for any $(\zeta_0,\theta_0)\in [L^2(0,1)]^2$, the solution $(u,w,\zeta,\theta)$ to \eqref{adj_sys_a_1} satisfies
		\begin{multline}\label{eq:obs_final-alpha=1}
			\|e^{-s\widehat{\mathfrak{S}}_m}  u \|^2_{L^2(Q_T)} + \|e^{-s\widehat{\mathfrak{S}}_m} w \|^2_{L^2(Q_T)} 
			+ 
			\|e^{-s\widehat{\mathfrak{S}}_m}  \zeta \|^2_{L^2(Q_T)} \\ 
			+ \|e^{-s\widehat{\mathfrak{S}}_m} \theta \|^2_{L^2(Q_T)} 
			+ \int_0^1 \left(|\zeta(T,x)|^2 + |\theta(T,x)|^2  \right)
			\\
			\leq  
			C \bigg[\iint e^{-2s\mathfrak{S}^*_m} \widehat{\mathfrak{Z}}_m^{71}\left(|F_1|^2 + |F_2|^2   + |F_3|^2 +  |F_4|^2 + |F_{4,x}|^2  \right) 
		\\
		+	  \iint_{\omega_0} e^{-2s\mathfrak{S}^*_m} 
		\widehat{\mathfrak{Z}}^{79}_m |u|^2 +   \iint_{\omega_0}e^{-2s\mathfrak{S}^*_m}  \widehat{\mathfrak{Z}}^{73}_m|w|^2\bigg] ,
		\end{multline}
	where the source terms   $F_j\in L^2(Q_T)$, $j=1,2,3,4$ in \eqref{adj_sys_a_1} are verifying
	$ \iint e^{-2s\mathfrak{S}^*_m} \widehat{\mathfrak{Z}}_m^{71}\Big(|F_1|^2 + |F_2|^2   + |F_3|^2 + |F_4|^2 +  |F_{4,x}|^2 \Big) <+\infty$.   
\end{proposition}

\begin{proof}
	Let us define $\rho^*(t)=e^{-s\widehat{\mathfrak{S}}_m(t)}$ so that $\rho^*(0)=0$. 
	Then, the equation of $(\zeta^*, \theta^*)=(\rho^*\zeta, \rho^*\theta)$ satisfies (recall first the adjoint system \eqref{adj_sys_a_1})
	\begin{align*}
		\begin{dcases}
			\zeta^*_t + \zeta^*_{xxxx} +\gamma \zeta^*_{xx} = \theta^*_x + \rho^* F_3 + \rho^*_t \zeta  & \textnormal{in } Q_T, \\
			\theta^*_t - \theta^*_{xx} + \beta \theta^*_x =  \zeta^*_x + \rho^* F_4 + \rho^*_t \theta & \textnormal{in } Q_T,\\
			\zeta^* = \zeta^*_x = \theta^* =0 & \textnormal{in } (0,T), \\
			\zeta^*(0) = \theta^*(0) =0 &\textnormal{in } (0,1),
		\end{dcases}
	\end{align*}
	and it satisfies the following estimate 
	\begin{align}\label{esti-for-rhostar-alpha=1}
		\|\zeta^*\|^2_{L^\infty(0,T; L^2(0,1))} + \|\theta^*\|^2_{L^\infty(0,T; L^2(0,1))} 
		\leq & C \Big(\|\rho^* F_3\|^2_{L^2((0,T)\times (0,1))} + \|\rho^* F_4\|^2_{L^2((0,T)\times (0,1))}  \\ \notag 
		& + \|\rho^*_t \zeta\|^2_{L^2((0,T)\times (0,1))} +  \|\rho^*_t \theta\|^2_{L^2((0,T)\times (0,1))} \Big)
	\end{align}
	
	Similarly, the equation of $(u^*, w^*)=(\rho^* u, \rho^*w)$ is
	\begin{align*}
		\begin{dcases}
			-u^*_t + u^*_{xxxx} +\gamma u^*_{xx} =  -w^*_x + \zeta^*\mathds{1}_{\mathcal O} + \rho^*F_1 - \rho^*_t u    & \textnormal{in } Q_T, \\
			-w^*_t - w^*_{xx} - \beta w^*_x =   -u^*_x + \rho^*F_2  - \rho^*_t w & \textnormal{in } Q_T,\\
			u^* = u^*_x = w =0 &\textnormal{in } (0,T), \\
			u^*(T) = w^*(T) =0 &\textnormal{in } (0,1),
		\end{dcases}
	\end{align*} 
	which satisfies 
	\begin{multline}\label{esti-back-rhostar-alpha=1}
		\|u^*\|^2_{L^\infty(0,T; L^2(0,1))} + \|w^*\|^2_{L^\infty(0,T; L^2(0,1))} 
		\leq  C \Big(\|\rho^* F_1\|^2_{L^2((0,T)\times (0,1))} + \|\rho^* F_2\|^2_{L^2((0,T)\times (0,1))}  \\  
		+ \|\rho^*_t u\|^2_{L^2((0,T)\times (0,1))} +  \|\rho^*_t w\|^2_{L^2((0,T)\times (0,1))} +\|\rho^* \zeta\|^2_{L^2((0,T)\times (0,1))} \Big).
	\end{multline}

	Now, we check that $|(\widehat{\mathfrak{S}}_m)_t|\leq C (\mathfrak{Z}^*_m)^{1+1/m}$ so that $|\rho^*_t| \leq C e^{-s \widehat{\mathfrak{S}}_m}  (\mathfrak{Z}^*_m)^{3/2}$ (since $m>3$ in this case).
%
%
Using this, together with \eqref{esti-for-rhostar-alpha=1}--\eqref{esti-back-rhostar-alpha=1} (also by applying the Poincar\'e inequality on $\theta$ in the r.h.s. of \eqref{esti-for-rhostar-alpha=1}), we get
	\begin{multline}\label{esti-back-for-rhostar-alpha=1}
		\|u^*\|^2_{L^\infty(0,T; L^2(0,1))} + \|w^*\|^2_{L^\infty(0,T; L^2(0,1))}  + 	\|\zeta^*\|^2_{L^\infty(0,T; L^2(0,1))} + \|\theta^*\|^2_{L^\infty(0,T; L^2(0,1))} \\
		\leq \iint \left(e^{-2s \widehat{\mathfrak{S}}_m} (\mathfrak{Z}^*_m)^3 |u|^2 +  e^{-2s \widehat{\mathfrak{S}}_m} (\mathfrak{Z}^*_m)^3 |w|^2 + e^{-2s \widehat{\mathfrak{S}}_m} (\mathfrak{Z}^*_m)^3 |\zeta|^2+ e^{-2s \widehat{\mathfrak{S}}_m} (\mathfrak{Z}^*_m)^3 |\theta_x|^2 \right)  \\
		+ \iint e^{-2s\mathfrak{S}^*_m}  \left(|F_1|^2 + |F_2|^2+|F_3|^2 +|F_4|^2+|F_{4,x}|^2\right)   .
	\end{multline}
	Then, by applying the modified Carleman estimate \eqref{Carleman-T-alpha=1}, we get the required observability inequality  \eqref{eq:obs_final-alpha=1}.
\end{proof}

\subsubsection{Null-controllability of the linearized system ($\boldsymbol{\alpha=1}$)}\label{sec-null-control-1}
We recall the operators $\mathcal L_1$, $\mathcal L_2$ from \eqref{op-L1}--\eqref{op-L2} and their adjoints  $\mathcal L^*_1$, $\mathcal L_2^*$ from \eqref{op-L1star}--\eqref{op-L2star}. 

 Denote the following Banach space 
\begin{align}\label{space_E-alpha=1}
	{\mathcal E}_1 := \Big\{ (y, z, p, q, h_1, h_2) \ | \ & e^{s\mathfrak{S}^*_m}  \widehat{\mathfrak{Z}}^{-71/2}_m (y, z, p, q) \in [L^2(Q_T)]^4 ,  \\ \notag 
	& e^{s\mathfrak{S}^*_m}\Big( \widehat{\mathfrak{Z}}_m^{-79/2} h_1 \mathds{1}_\omega ,  \widehat{\mathfrak{Z}}_m^{-73/2} h_2 \mathds{1}_\omega\Big) \in [L^2((0,T)\times \omega)]^2 , \\ \notag 
	& e^{s\mathfrak{S}^*_m} \widehat{\mathfrak{Z}}_m^{-79/2} (y, z, p, q) \in \C^0([0,T]; [L^2(0,1)]^4)  \\ \notag 
	& \qquad \cap L^2(0,T; H^2_0(0,1)\times H^{1}_0(0,1)\times H^{2}_0(0,1)\times H^{1}_0(0,1)) , \\ \notag 
	& e^{s\widehat{\mathfrak{S}}_m} \left(\mathcal L_1 y -z_x - h_1 \mathds{1}_{\omega}\right) \in L^2(Q_T), \\ \notag 
	&  e^{s\widehat{\mathfrak{S}}_m} 
	\left(\mathcal L_2 z -y_x - h_2 \mathds{1}_{\omega} \right) \in L^2(Q_T), \\ \notag 
	& e^{s\widehat{\mathfrak{S}}_m} 
	\left(\mathcal L^*_1 p +q_x - y \mathds{1}_{\mathcal O}\right) \in L^2(Q_T), \\    \notag                          
	&  e^{s\widehat{\mathfrak{S}}_m} 
	\left(\mathcal L^*_2 q +p_x\right) \in L^2(Q_T), \\ \notag 
	& \qquad p(T, \cdot)=q(T, \cdot)=0 \text{ in } (0,1) \Big\}. 
\end{align}

\begin{proposition}[Null-controllability: the case $\alpha=1$]\label{prop-null-cont-alpha=1}
	Let $m$, $k$,  $s$ and $\lambda$ be fixed constants according to \Cref{theorem-carleman}. Let $f_1, f_2, f_3, f_4$  be the functions satisfying 
	\begin{align}\label{condition-on-f} 
		e^{s\widehat{\mathfrak{S}}_m}  (f_1, f_2, f_3,f_4) \in [L^2(Q_T)]^4  .
	\end{align} 
	Then, there exists controls $(h_1, h_2)$ and a solution $(y, z, p, q)$ to \eqref{sys_linear-1}--\eqref{sys_linear-2} (when $\alpha=1$) such that we have 
	$p(0)=q(0)=0$ in $(0,1)$. 
\end{proposition}
\begin{proof}
	We consider the following space 
	\begin{align*}
		{\mathcal Q}_1 : = \Big\{(u,w, \zeta, \theta) \in \C^\infty(\overline{Q_T}) \ | \ u=u_x =w=\zeta=\zeta_x = \theta=0 \text{ on } \Sigma_T \Big\},
	\end{align*}	
	and define the  bi-linear operator ${\mathcal K}_1: 	{\mathcal Q}_1 \times 	{\mathcal Q}_1 \to \mathbb R$ given by
	\begin{align}\label{bi-linear-form-2}
		&{\mathcal K}_1 ( (u, w, \zeta, \theta), (\underline u, \underline w, \underline \zeta, \underline \theta  )  )  \\ \notag 
		:&= \iint e^{-2s\mathfrak{S}^*_m}\widehat{\mathfrak{Z}}^{71}_m \Big[ (\mathcal L^*_1 u+w_x-\zeta \mathds{1}_{\mathcal O})(\mathcal L^*_1 \underline u + \underline{w}_x-\zeta \mathds{1}_{\mathcal O})    + 
		(\mathcal L^*_2 w +u_x)(\mathcal L^*_2 \underline w + \underline{u}_x) \\ \notag 
		&\qquad  \qquad \qquad
		+ (\mathcal L_1 \zeta -\theta_x)(\mathcal L_1 \underline \zeta-\underline{\theta}_x ) +  (\mathcal L_2\theta -\zeta_x)(\mathcal L_2 \underline \theta-\underline{\zeta}_x ) +\big(\mathcal L_2\theta -\zeta_{x}\big)_x\big(\mathcal L_2 \underline \theta-\underline{\zeta}_{x} \big)_x   \Big] \\ \notag 
		& \quad + \int_0^T \int_{\omega} e^{-2s \mathfrak{S}^*_m } \big( \widehat{\mathfrak{Z}}_m^{79} u \underline{u} + \widehat{\mathfrak{Z}}_m^{73} w \underline{w} \big) ,
	\end{align}
	as well as the  linear operator $l:\mathcal Q_1 \to \mathbb R$ given by
	\begin{align*}
		l_1((u,w,\zeta,\theta))
		: = \langle f_1, u\rangle_{L^2(Q_T)} + \langle f_2, w\rangle_{L^2(Q_T)}
		+ \langle f_3, \zeta\rangle_{L^2(Q_T)} + \langle f_4, \theta\rangle_{L^2(Q_T)}. 
	\end{align*}
	
	It is clear that the product \eqref{bi-linear-form-2} is defines an inner product since the observability inequality \eqref{eq:obs_final-alpha=1} holds. 
	We denote by $\mathcal Q_\flat$, the closure of $\mathcal Q_1$ w.r.t. the norm $\mathcal K_1(\cdot,\cdot)^{1/2}$ and indeed it is an Hilbert space endowed with the inner product \eqref{bi-linear-form-2}. The linear functional $l_1$ is also bounded due to \eqref{eq:obs_final-alpha=1} and the hypothesis \eqref{condition-on-f},  therefore the Lax-Milgram's theorem ensures the existence of unique $(\widehat u, \widehat w, \widehat \zeta, \widehat \theta)\in \mathcal Q_\flat\times \mathcal Q_\flat$ satisfying 
	\begin{align}\label{equa-bilinear}
		\mathcal K_1 ( (\widehat u, \widehat w, \widehat \zeta, \widehat \theta), (\underline u, \underline w, \underline \zeta, \underline \theta  )  ) = l_1(  (\underline u, \underline w, \underline \zeta, \underline \theta  )  ) , \quad \forall   (\underline u, \underline w, \underline \zeta, \underline \theta  ) \in \mathcal Q_\flat. 
	\end{align}
	
	Now, we set 
	\begin{align}\label{solution-controlled-linear-for-1}
		&\widehat y = e^{-2s\mathfrak{S}^*_m}  \widehat{\mathfrak{Z}}^{71}_m
		 (\mathcal L^*_1 \widehat u + \widehat{w}_x -\widehat \zeta \mathds{1}_{\mathcal O} ), \ \ \ 	\widehat z = e^{-2s\mathfrak{S}^*_m}   \widehat{\mathfrak{Z}}^{71}_m (\mathcal L^*_2 \widehat w + \widehat{u}_x  ) , \\
		\label{solution-controlled-linear-back-1}
		&	\widehat p = e^{-2s\mathfrak{S}^*_m} \widehat{\mathfrak{Z}}^{71}_m  (\mathcal L_1 \widehat \zeta - \widehat{\theta}_x ), \ \ \ \	\widehat q = e^{-2s\mathfrak{S}^*_m}  \widehat{\mathfrak{Z}}^{71}_m \left[(\mathcal L_2 \widehat \theta - \widehat{\zeta}_x) - (\mathcal L_2 \widehat \theta - \widehat{\zeta}_{x})_{xx}  \right] ,
	\end{align}
	and 
	\begin{align*}
		\widehat h_1 = e^{-2 s \mathfrak{S}^*_m } 
		 \widehat{\mathfrak{Z}}^{79}_m \widehat u \mathds{1}_\omega , \quad  	\widehat h_2 = e^{-2 s \mathfrak{S}^*_m } \widehat{\mathfrak{Z}}^{73}_m \widehat w \mathds{1}_\omega .
	\end{align*}
	Then, using the observability inequality \eqref{eq:obs_final-alpha=1}  we have (from the equation \eqref{equa-bilinear})
	\begin{multline}\label{bound_solution_control-1}
		\|e^{s\mathfrak{S}^*_m} \widehat{\mathfrak{Z}}^{-71/2}_m \widehat y\|_{L^2(Q_T)} + \|e^{s\mathfrak{S}^*_m} \widehat{\mathfrak{Z}}^{-71/2}_m \widehat z\|_{L^2(Q_T)}
		+ \|e^{s\mathfrak{S}^*_m} \widehat{\mathfrak{Z}}^{-71/2}_m \widehat p\|_{L^2(Q_T)} \\
			+ \|e^{s\mathfrak{S}^*_m} \widehat{\mathfrak{Z}}^{-71/2}_m \widehat q\|_{L^2(Q_T)} 
	+	\|e^{s\mathfrak{S}^*_m} \widehat{\mathfrak{Z}}^{-79/2}_m\widehat h_1\|_{L^2((0,T)\times \omega)}
	+ \|e^{s\mathfrak{S}^*_m} \widehat{\mathfrak{Z}}^{-73/2}_m\widehat h_2\|_{L^2((0,T)\times \omega)} <+\infty  ,
	\end{multline}
	and this $(\widehat y, \widehat z, \widehat p, \widehat q)$ is unique solution to the linearized system \eqref{sys_linear-1}--\eqref{sys_linear-2} (with $\alpha=1$) in the sense of transposition with the control functions $\widehat h_1$ and $\widehat h_2$.  Moreover, from \eqref{solution-controlled-linear-back-1} it is clear that 
	$$\widehat p(0)=0, \quad \widehat q(0)=0 \quad \text{in } (0,1).$$
	
	\smallskip 
	
At this stage,  we set 
	\begin{align*}
		(y^*, z^*, p^*, q^*) = e^{s \mathfrak{S}^*_m}
		\widehat{\mathfrak{Z}}^{-79/2}_m (\widehat y, \widehat z, \widehat p, \widehat q) ,
	\end{align*} 
and by following the similar argument as in \Cref{prop-null-cont-alpha=0}, one has 
	\begin{align*}
	(y^*, z^*, p^*, q^*) \in \C^0([0,T]; [L^2(0,1)]^4) \cap L^2(0,T; H^2_0(0,1) \times H^1_0(0,1) \times H^2_0(0,1) \times H^1_0(0,1)) . 
\end{align*}
This implies that the functions $(\widehat y, \widehat z, \widehat p, \widehat q, \widehat h_1, \widehat h_2)\in \mathcal E_1$ defined in \eqref{space_E-alpha=1}. The proof is finished.
\end{proof}

\subsubsection{Local null-controllability of the nonlinear system  ($\boldsymbol{\alpha=1}$)} 

The proof of the main result, that is \Cref{thm:main} for the case $\alpha=1$ is exactly similar as we describe for the case $\alpha=0$. We refer Subsection \ref{section-locall-null-alpha=0} for the details.

\smallskip

\section{Concluding remarks and comments}\label{sec:final}
In this work, we have proved the existence of insensitizing controls for a coupled system of fourth- and second-order parabolic PDEs. As usual, this problem is reformulated as a null-control problem for an extended system in cascade form (see \eqref{sys_equiv_1}--\eqref{sys_equiv_2}) but due to the presence of the parameter $\alpha$ in the sentinel \eqref{eq:sentinel}, this system may change its structure and different Carleman tools should be employed for studying the observability of the corresponding adjoint equations. Let us  present a concluding remark concerning the problems addressed in this work. 

\medskip

\textit{Less control than equations}. An important question related to the controllability of coupled systems is: what is (are) the minimum number of control(s) required to accomplish a given task. In the works \cite{Cerpa-Mercado-Pazoto} and \cite{Cerpa_Careno}, it has been shown that for proving the null-controllability of the system
\begin{align}\label{null-control-p}
	\begin{dcases}
		y_t + y_{xxxx} +\gamma y_{xx} + y y_x = z_x  & \text{in } Q_T, \\
		z_t - z_{xx} + \beta z_x =  y_x   & \text{in } Q_T,\\
		y=y_x=z=0        &\text{in } \Sigma_T, \\
		y(0)=y_0, \ \ z(0) =z_0 &\text{in } (0,1),
 	\end{dcases}
	\end{align}
it is needed only one control localized in either equation. 

\smallskip 

In our insensitizing problem we have used two controls, one for each component, but determining if we can reduce its number it is not so clear. As far as we know, there are very few papers devoted to the insensitizing control problems for coupled systems, see \cite{CGG15,CCC16,Cn17}. Similar to  our case, in those works, the original problem is transformed into a control problem for an extended system of four equations (two forward and two backward in time).  In particular, in \cite{CCC16}, the authors have used only one control to prove their result by using a sentinel depending (explicitly) only on one of the components of the system. This is comparable to choosing $\alpha=0$ in our case \eqref{eq:sentinel}. At a first glance, it seems that we can follow the ideas similar to \cite{CCC16} to eliminate the extra control, but in our case, the first order couplings make things difficult. Indeed, recalling our adjoint system \eqref{adj_sys_a_0} (i.e. the case $\alpha=0$), we see that the only way to remove an observation related to $w$ is to differentiate the second equation of \eqref{adj_sys_a_0} (changing $\mathds{1}_{\mathcal O}$ by some suitable smooth approximation) and use the coupling in the first one. By doing so we can obtain a Carleman estimate with localized terms depending on $u$, $\zeta$ and $\theta$. Then, the only way to estimate  $\theta$ locally is by using the second equation of \eqref{adj_sys_a_0} which reintroduces a local term of $w$ (see eq. \eqref{esti_theta_observation}), and we failed! Therefore,  dealing with cascade systems (forward-backward) of the original coupled systems can be tricky and its controllability and observability properties deserve further attention. This fact has been also pointed out in \cite{HSdeT18} in the context of hierarchic control problems.

\appendix

\section{An auxiliary result}

\begin{lemma}\label{Lemma-auxiliary}
	Recall the weight functions $\vphi_m$ and $\widehat \vphi_m$ defined by \eqref{weight_function} and \eqref{max_min} respectively for $\lambda>1$ and $k>m>0$. Then, for  any $s>0$ and $p\in \mathbb N^*$, there exists   $c_0>0$, such that we have 
	\begin{align}\label{good_sign}
		-p s\vphi_m + (p-1) s\widehat \vphi_m \leq \frac{-c_0 s}{t^m(T-t)^m} . 
	\end{align}
\end{lemma}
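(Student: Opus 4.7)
The plan is to expand $\vphi_m$ and $\widehat\vphi_m$ explicitly and to verify the sign of the resulting numerator. First, since $\nu(0)=\nu(1)=0$ and the numerator $e^{\lambda(1+1/m)k\|\nu\|_\infty}-e^{\lambda(k\|\nu\|_\infty+\nu(x))}$ of $\vphi_m$ is a strictly decreasing function of $\nu(x)$ (the coefficient $\lambda>0$), the maximum defining $\widehat\vphi_m$ is attained at the endpoints, giving
\begin{equation*}
\widehat\vphi_m(t)=\frac{e^{\lambda(1+1/m)k\|\nu\|_\infty}-e^{\lambda k\|\nu\|_\infty}}{t^m(T-t)^m}.
\end{equation*}
Setting $M:=e^{\lambda(1+1/m)k\|\nu\|_\infty}$ for brevity, the contributions proportional to $M$ combine through $-pM+(p-2)M=-2M$, so that substituting into $-p\vphi_m+(p-2)\widehat\vphi_m$ produces
\begin{equation*}
-p\vphi_m+(p-2)\widehat\vphi_m=\frac{1}{t^m(T-t)^m}\Bigl[-2M+pe^{\lambda(k\|\nu\|_\infty+\nu(x))}-(p-2)e^{\lambda k\|\nu\|_\infty}\Bigr].
\end{equation*}

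Next I would estimate the middle exponential using $\nu(x)\le\|\nu\|_\infty$ and factor out $e^{\lambda k\|\nu\|_\infty}>0$, arriving at the pointwise bound
\begin{equation*}
-p\vphi_m+(p-2)\widehat\vphi_m\leq\frac{e^{\lambda k\|\nu\|_\infty}}{t^m(T-t)^m}\Bigl[\,pe^{\lambda\|\nu\|_\infty}-(p-2)-2e^{\lambda k\|\nu\|_\infty/m}\,\Bigr].
\end{equation*}
The entire content of the lemma reduces to showing that the bracketed quantity, which no longer depends on $(t,x)$, is bounded above by a strictly negative constant; once that is granted, multiplying by $s>0$ yields the claim with any positive $\delta$ below the resulting absolute value.

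The key and only nontrivial step is the comparison of the exponentials $e^{\lambda\|\nu\|_\infty}$ and $e^{\lambda k\|\nu\|_\infty/m}$. The hypothesis $k>m$ gives $k/m>1$, so their ratio equals $e^{\lambda\|\nu\|_\infty(k/m-1)}$, which grows unboundedly with $\lambda$. Taking $\lambda$ large enough, which is consistent with the large-$\lambda$ regimes $\lambda\ge\widehat\lambda,\lambda^\ast,\lambda_0$ under which the Carleman estimates of Sections \ref{carleman-alpha=0,1}--\ref{carleman-alpha=1} and hence the present lemma are invoked, secures the inequality $2e^{\lambda k\|\nu\|_\infty/m}\ge pe^{\lambda\|\nu\|_\infty}+p$. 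The bracket is then bounded above by $-2<0$, so the statement follows with $\delta:=2se^{\lambda k\|\nu\|_\infty}$ (or any smaller positive constant). The only subtle point, and the one that deserves explicit care, is making transparent this implicit dependence on $\lambda$ being large enough; once this is pinned down, the rest is routine algebra.
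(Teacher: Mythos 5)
Your computation is correct and follows the same algebraic skeleton as the paper's proof: identify $\widehat\vphi_m$ with the common endpoint value (using $\nu(0)=\nu(1)=0$ and monotonicity in $\nu(x)$), cancel the leading terms to $-2M$, factor out $e^{\lambda k\|\nu\|_\infty}$, and reduce the lemma to the strict positivity of the bracket $2e^{\frac{\lambda}{m}k\|\nu\|_\infty}-pe^{\lambda\nu(x)}+p-2$ (you work with its negative, after bounding $\nu(x)\le\|\nu\|_\infty$). Where you genuinely diverge is in the one nontrivial step. The paper fixes $m$ and $\lambda$ and obtains positivity by choosing the weight parameter $k$ \emph{large enough}; its argument never actually uses the hypothesis $k>m$, only that $pe^{\lambda\nu(x)}$ is independent of $k$. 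You instead fix $k>m$ and send $\lambda$ large, exploiting precisely $k/m>1$ so that the ratio $e^{\lambda\|\nu\|_\infty(k/m-1)}$ blows up. Both conclusions are legitimate, and some such implicit largeness is unavoidable: the inequality \eqref{good_sign} is in fact false for all admissible parameters (take $p$ large and $k$ close to $m$, so the bracket is roughly $(2-p)e^{\lambda\|\nu\|_\infty}+p-2<0$). Your variant has the advantage of meshing with the regime $\lambda\ge\widehat\lambda,\lambda^*,\lambda_0$ in which the lemma is actually invoked alongside the Carleman estimates, at the price of making the admissible $\lambda$ depend on $p$, $k/m$ and $\|\nu\|_\infty$; the paper's variant holds for every fixed $\lambda>0$ but requires tuning the parameter $k$ entering the definition \eqref{weight_function} of the weights themselves.

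Two small slips to repair. First, your sufficient condition $2e^{\lambda k\|\nu\|_\infty/m}\ge pe^{\lambda\|\nu\|_\infty}+p$ bounds the bracket by $2-2p$, which equals $0$, not $-2$, when $p=1$; either strengthen the condition to $2e^{\lambda k\|\nu\|_\infty/m}\ge pe^{\lambda\|\nu\|_\infty}+p+2$ (costless, since the left side dominates asymptotically) or dispose of $p\in\{1,2\}$ directly, noting that then the left side of \eqref{good_sign} is at most $-ps\vphi_m$ and $\vphi_m\ge e^{\lambda k\|\nu\|_\infty}\bigl(e^{\frac{\lambda}{m}k\|\nu\|_\infty}-e^{\lambda\|\nu\|_\infty}\bigr)t^{-m}(T-t)^{-m}>0$ whenever $k>m$. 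Second, the factor $s$ in your final choice $\delta:=2se^{\lambda k\|\nu\|_\infty}$ is spurious: your derivation yields $-ps\vphi_m+(p-2)s\widehat\vphi_m\le -2e^{\lambda k\|\nu\|_\infty}\,s\,t^{-m}(T-t)^{-m}$, so the correct constant is $\delta=2e^{\lambda k\|\nu\|_\infty}$, independent of $s$ (your choice would assert a stronger, unproven bound whenever $s>1$).
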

\begin{proof}
	The proof can be deduced from the explicit expressions of the weight functions. We see	
	\begin{align*}
		&-ps\vphi_m + (p-1)s \widehat \vphi_m \\
		 =& -ps \frac{e^{\lambda(1+\frac{1}{m})k \|\nu\|_{\infty} } - e^{\lambda\big(k\|\nu\|_{\infty} +\nu(x) \big)}   }{t^m(T-t)^m} + (p-1)s \frac{e^{\lambda(1+\frac{1}{m})k \|\nu\|_{\infty} } - e^{\lambda k\|\nu\|_{\infty} }   }{t^m(T-t)^m}\\
		=&  -\frac{s e^{\lambda(1+\frac{1}{m})k \|\nu\|_{\infty} }}{t^m(T-t)^m} + \frac{s e^{\lambda k \|\nu\|_{\infty}}}{t^m(T-t)^m} \left(p\big(e^{\lambda \nu(x)} -1\big) + 1 \right)\\
		= &-\frac{s e^{\lambda k \|\nu\|_{\infty}} }{t^m(T-t)^m} \left(e^{\frac{\lambda}{m} k \|\nu\|_{\infty}} -pe^{\lambda \nu(x)} +p-1 \right).
	\end{align*}
	Thus, for fixed $m>0$ and any $\lambda >0$, one may choose $k>m$ large enough such that there exists some $c_0>0$ verifying that the quantity $\left(e^{\frac{\lambda}{m} k \|\nu\|_{\infty}} -pe^{\lambda \nu(x)} +p-1 \right) \geq c_0 >0$,  $\forall x\in [0,1]$.  Hence, the result \eqref{good_sign} follows.
\end{proof}

\section{Well-posedness  of the linear stabilized KS system}
Let us consider the following coupled system 
\begin{align}\label{system-appendix}
	\begin{dcases}
		y_t + y_{xxxx} +\gamma y_{xx} = z_x +  f_1 & \text{in } Q_T, \\
		z_t - z_{xx} + \beta z_x =  y_x + f_2 & \text{in } Q_T,\\
		y=y_x=z=0        &\text{in } \Sigma_T, \\
		y(0)=y_0 \ \ z(0) =z_0   &\text{in } (0,1),
	\end{dcases}
\end{align}
where $(f_1,f_2)\in [L^2(Q_T)]^2$,  $\gamma>0$ and $\beta$ is any real number. 

Below, we write the standard well-posedness and some regularity result concerning the prototype of coupled system \eqref{system-appendix}. The proof will be omitted. 

\begin{proposition}[Well-posedness \& energy estimate]\label{Prop-appendix-well-posed}
	For any given $(y_0, z_0)\in [L^2(0,1)]^2$ and $(f_1,f_2)\in [L^2(Q_T)]^2$, there exists unique weak solution 
	$$(y,z) \in [\C^0([0,T]; L^2(0,1))]^2 \cap L^2(0,T; H^2_0(0,1)\times H^1_0(0,1))$$
	 to \eqref{system-appendix}, such that it satisfies 
	 \begin{align*}
	&\|(y,z)\|_{[\C^0([0,T]; L^2(0,1))]^2} + \|(y,z)\|_{L^2(0,T; H^2_0(0,1)\times H^1_0(0,1))  } \\
		&\quad \leq C \left( \|(y_0,z_0)\|_{[L^2(0,1)]^2} + \|(f_1,f_2)\|_{[L^2(Q_T)]^2}\right),
	 \end{align*}
 for some $C>0$. 	
\end{proposition}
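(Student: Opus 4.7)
The plan is to follow the classical scheme for linear parabolic systems: construct approximate solutions by a Galerkin method, derive a priori energy estimates that close thanks to a cancellation of the first-order couplings, and then pass to the limit. Uniqueness will follow from the linearity of the system and the same energy estimate applied to the difference of two solutions (with zero data).

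First I would set up a Galerkin scheme based on suitable orthonormal bases: for the $y$-component the eigenfunctions of $\partial_x^4$ with the clamped boundary conditions $y=y_x=0$, and for the $z$-component the eigenfunctions of $-\partial_x^2$ with Dirichlet conditions on $(0,1)$. This produces smooth finite-dimensional approximations $(y^n,z^n)$ that solve a linear ODE system, so local existence is immediate and the only real work is the a priori bound.

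The key step is the energy estimate. I would multiply the first equation by $y$ and the second by $z$, integrate on $(0,1)$, and use the boundary conditions. Integration by parts gives
\begin{align*}
\tfrac{1}{2}\tfrac{d}{dt}\|y\|_{L^2}^2 + \|y_{xx}\|_{L^2}^2 - \gamma\|y_x\|_{L^2}^2
&= \int_0^1 z_x\, y\,\dx + \int_0^1 f_1 y\,\dx,\\
\tfrac{1}{2}\tfrac{d}{dt}\|z\|_{L^2}^2 + \|z_x\|_{L^2}^2
&= \int_0^1 y_x\, z\,\dx + \int_0^1 f_2 z\,\dx,
\end{align*}
where the $\beta$-term vanishes since $\int_0^1 z_x z\,\dx = 0$ thanks to the Dirichlet condition on $z$. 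Summing the two identities, the crucial observation is that the first-order coupling terms cancel: $\int_0^1 z_x y + y_x z\,\dx = \int_0^1 (yz)_x\,\dx = 0$. The remaining obstruction is the indefinite term $\gamma\|y_x\|^2$, which I would absorb via the interpolation inequality $\|y_x\|_{L^2}^2 \leq \varepsilon\|y_{xx}\|_{L^2}^2 + C(\varepsilon)\|y\|_{L^2}^2$ (valid for $y\in H^2_0$), choosing $\varepsilon$ small. Applying Young's inequality to the source terms $\int f_i \cdot$ and then Gronwall's lemma yields, uniformly in $n$,
\begin{align*}
\|(y^n,z^n)\|_{L^\infty(0,T;[L^2]^2)}^2 + \|y^n\|_{L^2(0,T;H^2_0)}^2 + \|z^n\|_{L^2(0,T;H^1_0)}^2 \leq C\bigl(\|(y_0,z_0)\|_{[L^2]^2}^2 + \|(f_1,f_2)\|_{[L^2(Q_T)]^2}^2\bigr).
\end{align*}

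Once these uniform bounds are in hand, I would extract a weakly/weakly-$*$ convergent subsequence and verify in the usual way that the limit $(y,z)$ is a weak solution. To obtain the continuity in time with values in $L^2$, I would combine the bounds above with the equations themselves to show $y_t\in L^2(0,T;H^{-2})$ and $z_t\in L^2(0,T;H^{-1})$, so that the Lions--Magenes lemma gives $y,z\in C^0([0,T];L^2)$. Uniqueness and continuous dependence then follow by applying the same energy estimate to the difference of two solutions. The main (mild) obstacle is just the sign-indefinite $\gamma y_{xx}$ term, which is handled by the interpolation trick above; the first-order couplings are harmless thanks to the integration-by-parts cancellation.
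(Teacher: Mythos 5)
Your proof is correct: the Galerkin scheme with the clamped/Dirichlet eigenbases, the cancellation $\int_0^1 (z_x y + y_x z)\dx = \int_0^1 (yz)_x \dx = 0$, the absorption of the indefinite term $\gamma\|y_x\|^2$ by interpolation, Gronwall, and the Lions--Magenes lemma for time continuity all go through without difficulty. Note that the paper itself omits the proof of \Cref{Prop-appendix-well-posed} (declaring it standard, and citing \cite{Cerpa-Mercado-Pazoto} only for the higher regularity statement), so your argument is exactly the standard energy-method proof the authors are implicitly invoking.
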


\begin{proposition}[Regularity]\label{Prop-appendix-regularity}
	For any given $(y_0, z_0)\in H^2_0(0,1)\times H^1_0(0,1)$ and $(f_1,f_2)\in [L^2(Q_T)]^2$, the solution  
	$$(y,z) \in \C^0([0,T];  H^2_0(0,1)\times H^1_0(0,1)) \cap L^2(0,T; H^4(0,1)\times H^2(0,1))$$
	to \eqref{system-appendix}  satisfies 
	\begin{align*}
		&\|(y,z)\|_{\C^0([0,T];  H^2_0(0,1)\times H^1_0(0,1))} + \|(y,z)\|_{L^2(0,T; H^4(0,1)\times H^2_0(0,1))  } \\
		&\quad \leq C \left(\|(y_0,z_0)\|_{ H^2_0(0,1)\times H^1_0(0,1)}+ \|(f_1,f_2)\|_{[L^2(Q_T)]^2}\right),
	\end{align*}
	for some $C>0$. 	
\end{proposition}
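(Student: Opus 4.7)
\textbf{Proof proposal for Proposition \ref{Prop-appendix-regularity}.}

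The plan is to bootstrap from the weak well-posedness result of Proposition \ref{Prop-appendix-well-posed} by decoupling the system and treating each equation as a scalar linear problem with a known $L^2$ source. First I would apply Proposition \ref{Prop-appendix-well-posed} to the data $(y_0,z_0,f_1,f_2)$ (noting that $H^2_0(0,1)\times H^1_0(0,1)\hookrightarrow [L^2(0,1)]^2$) to obtain $(y,z)\in[\C^0([0,T];L^2)]^2\cap L^2(0,T;H^2_0\times H^1_0)$. In particular, $z_x\in L^2(Q_T)$ and $y_x\in L^2(Q_T)$ with norms controlled by $\|(y_0,z_0)\|_{H^2_0\times H^1_0}+\|(f_1,f_2)\|_{[L^2(Q_T)]^2}$.

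Next I would view the first equation of \eqref{system-appendix} as a scalar fourth-order linear Kuramoto--Sivashinsky equation
\[ y_t+y_{xxxx}+\gamma y_{xx}=\widetilde f_1\quad\text{in }Q_T,\qquad y=y_x=0\text{ on }\Sigma_T,\qquad y(0)=y_0, \]
with source $\widetilde f_1:=z_x+f_1\in L^2(Q_T)$ and $y_0\in H^2_0(0,1)$. Constructing Galerkin approximations $y^N$ on a basis of eigenfunctions of $\partial_x^4$ with clamped boundary conditions, the key a priori estimate is obtained by multiplying by $y^N_t$, integrating by parts twice (the boundary terms vanish because $y=y_x=0$ on $\Sigma_T$ forces $y_t=y_{t,x}=0$ there), and using the identities
\[ \int_0^1 y_{xxxx}y_t\,dx=\frac{d}{dt}\frac{1}{2}\|y_{xx}\|_{L^2}^2,\qquad \gamma\int_0^1 y_{xx}y_t\,dx=-\frac{d}{dt}\frac{\gamma}{2}\|y_x\|_{L^2}^2. \]
A Young inequality to absorb $\widetilde f_1\,y_t$, together with the interpolation $\|y_x\|_{L^2}^2\leq\varepsilon\|y_{xx}\|_{L^2}^2+C_\varepsilon\|y\|_{L^2}^2$ to control the negative $\gamma$-term, and a Gronwall argument yield $y_t\in L^2(Q_T)$ and $y\in \C^0([0,T];H^2_0)$; passing to the limit $N\to\infty$ and using the equation to bound $y_{xxxx}=-y_t-\gamma y_{xx}+\widetilde f_1$ in $L^2(Q_T)$ gives $y\in L^2(0,T;H^4\cap H^2_0)$ with the claimed estimate (via the bound on $\|z_x\|_{L^2(Q_T)}$ from Step~1).

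Finally I would do the analogous argument for $z$, viewing the second equation as a scalar linear heat-type equation
\[ z_t-z_{xx}+\beta z_x=\widetilde f_2\quad\text{in }Q_T,\qquad z=0\text{ on }\Sigma_T,\qquad z(0)=z_0, \]
with source $\widetilde f_2:=y_x+f_2\in L^2(Q_T)$ (now bounded thanks to the estimate obtained in the previous step, which actually gives $y_x\in L^2(0,T;H^3)$) and $z_0\in H^1_0(0,1)$. This is a classical regularity result for parabolic equations with convection, obtained by multiplying by $-z_{xx}$, integrating by parts, and absorbing the first-order drift via Young: it yields $z\in\C^0([0,T];H^1_0)\cap L^2(0,T;H^2\cap H^1_0)$ with the asserted quantitative bound. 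I do not anticipate a serious obstacle here: the only mild technical point is organizing the decoupling so that each energy estimate only invokes data whose bound has already been established, and handling the indefinite sign term $-\frac{\gamma}{2}\|y_x\|_{L^2}^2$ in the KS energy identity via interpolation and Gronwall. Adding the two resulting estimates and using Step~1 to absorb the coupling contributions closes the proof.
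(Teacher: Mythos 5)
Your proposal is correct, but note that the paper does not actually prove this proposition: it explicitly omits the proof and defers to \cite[Prop. 2.1]{Cerpa-Mercado-Pazoto}, so your argument is a self-contained substitute rather than a reproduction. Your route — first invoke \Cref{Prop-appendix-well-posed} to get $y_x, z_x\in L^2(Q_T)$ with the right quantitative bound, then freeze the couplings as known $L^2$ sources and run scalar maximal-regularity estimates (multiplier $y_t$ for the clamped fourth-order equation, multiplier $-z_{xx}$ for the heat equation) — is a clean decoupling strategy; the reference instead treats the system as a whole with coupled Galerkin/energy estimates, balancing the cross terms $z_x$ and $y_x$ directly via Young's inequality. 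Your decoupling buys simplicity (no cross-term bookkeeping, and no circularity since linear uniqueness guarantees the bootstrapped scalar solutions coincide with the weak solution of the coupled system), at the cost of leaning on \Cref{Prop-appendix-well-posed} as an independent input; the coupled approach is self-contained in one pass and is what generalizes more readily when the couplings are too strong to be absorbed a priori. Two small points in your write-up: the sign-indefinite term $-\tfrac{\gamma}{2}\|y_x(t)\|_{L^2}^2$ is indeed handled by the interpolation $\|y_x\|^2_{L^2}\le \varepsilon\|y_{xx}\|^2_{L^2}+C_\varepsilon\|y\|^2_{L^2}$, and since $\|y\|_{C^0(L^2)}$ is already bounded by Step 1 you do not even need Gronwall there; and the parenthetical claim that Step 2 gives $y_x\in L^2(0,T;H^3(0,1))$ is true but irrelevant — only $y_x\in L^2(Q_T)$, already available from Step 1, is needed for the $z$-equation. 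Finally, strictly speaking the energy estimate yields $y\in L^\infty(0,T;H^2_0(0,1))$; the upgrade to $\C^0([0,T];H^2_0(0,1))$ follows from $y\in L^2(0,T;H^4(0,1))$, $y_t\in L^2(Q_T)$ by the standard Lions--Magenes interpolation argument, which you should cite or state explicitly.
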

A formal proof for the above regularity result can be found in \cite[Prop. 2.1]{Cerpa-Mercado-Pazoto}. 

\section*{Acknowledgements}


The work of the first author is partially supported by the French government research program
 ``Investissements d'Avenir" through the IDEX-ISITE initiative 16-IDEX-0001 (CAP 20-25). The work of the second author has been partially supported by the program ``Estancias Posdoctorales por México para la Formación y Consolidación de las y los Investigadores por México'' and by project A1-S-17475 of CONACyT, Mexico, and by the project IN109522 of DGAPA-UNAM, Mexico.

\bigskip

\bibliographystyle{siam}

\bibliography{ref_ins_ks}
	
\end{document}